\newtheorem{theorem}{Theorem}[section]
\newtheorem{lemma}[theorem]{Lemma}
\numberwithin{equation}{section}
\newcommand{\stkout}[1]{\ifmmode\text{\sout{\ensuremath{#1}}}\else\sout{#1}\fi}
\def\CC{{\mathchoice {\setbox0=\hbox{$\displaystyle\rm C$}\hbox{\hbox
to0pt{\kern0.4\wd0\vrule height0.9\ht0\hss}\box0}}
{\setbox0=\hbox{$\textstyle\rm C$}\hbox{\hbox
to0pt{\kern0.4\wd0\vrule height0.9\ht0\hss}\box0}}
{\setbox0=\hbox{$\scriptstyle\rm C$}\hbox{\hbox
to0pt{\kern0.4\wd0\vrule height0.9\ht0\hss}\box0}}
{\setbox0=\hbox{$\scriptscriptstyle\rm C$}\hbox{\hbox
to0pt{\kern0.4\wd0\vrule height0.9\ht0\hss}\box0}}}}
\def\ZZ{{\mathchoice {\hbox{$\sf\textstyle Z\kern-0.4em Z$}}
{\hbox{$\sf\textstyle Z\kern-0.4em Z$}}
{\hbox{$\sf\scriptstyle Z\kern-0.3em Z$}}
{\hbox{$\sf\scriptscriptstyle Z\kern-0.2em Z$}}}}
\newcommand{\lw}[1]{\smash{\lower2.ex\hbox{#1}}}
\definecolor{gray}{gray}{0.6}
\newcommand{\vertiii}[1]{{\left\vert\kern-0.25ex\left\vert\kern-0.25ex\left\vert #1 
		\right\vert\kern-0.25ex\right\vert\kern-0.25ex\right\vert}}
\def\wtb{\widetilde{b}}
\def\wtt{\widetilde{t}}
\def\wtD{\widetilde{D}}
\def\wtT{\widetilde{T}}
\begin{document}

\title{Explicit inverse of symmetric, tridiagonal near Toeplitz matrices with strictly diagonally dominant Toeplitz part}
% \title{Explicit inverse of tridiagonal near Toeplitz matrices: when the Toeplitz part is strictly diagonally dominant}

\author{Bakytzhan Kurmanbek\thanks{Nazarbayev University, Department of Mathematics, 53 Kabanbay Batyr Ave, Astana 010000, Kazakhstan (bakytzhan.kurmanbek@nu.edu.kz)} \and Yogi Erlangga\thanks{Zayed University, Department of Mathematics, Abu Dhabi Campus, P.O. Box 144534, United Arab Emirates (yogi.erlangga@zu.ac.ae)} \and Yerlan Amanbek\thanks{Nazarbayev University, Department of Mathematics, 53 Kabanbay Batyr Ave, Astana 010000, Kazakhstan (yerlan.amanbek@nu.edu.kz)}}
\maketitle

%\author[Affil1]{First Author}
%\author[Affil2]{Second Author}
%%\author[Affil1]{Third/Corresponding Author \corref{cor1}}
%\address[Affil1]{Affiliation address 1}
%\address[Affil2]{Affiliation address 2}
%% Replace capitalized text with the appropriate information (use standard capitalization rules for your text, not all capitals.
%\cortext[cor1]{Corresponding Author: AUTHOR'S NAME, TYPE AUTHOR'S POSTAL ADDRESS; Email, TYPE CORRESPONDING AUTHOR'S EMAIL ADDRESS; Phone, TYPE CORRESPONDING AUTHOR'S PHONE NUMBER}

\begin{abstract}
    This study investigates tridiagonal near-Toeplitz matrices in which the Toeplitz part is strictly diagonally dominant. The focus is on determining the exact inverse of these matrices and establishing upper bounds for the infinite norms of the inverse matrices. For cases with $b > 2$ and $b < -2$, we derive the compact form of the entries of the exact inverse. These results remain valid even when the matrices' corners are not diagonally dominant, specifically when $|\wtb| < 1$.
     
    Furthermore, we calculate the traces and row sums of the inverse matrices. Afterwards, we present upper bound theorems for the infinite norms of the inverse matrices. To demonstrate the effectiveness of the bounds and their application, we provide numerical results for solving Fisher's problem. 
    Our findings reveal that the converging rates of fixed-point iterations closely align with the expected rates, and there is minimal disparity between the upper bounds and infinite norm of the inverse matrix. Specifically, this observation holds true when $b > 2$ with $\widetilde{b} \leq 1$ and $b < -2$ with $\widetilde{b} \geq -1$.
    For other cases, there is potential for further improvement in the obtained upper bounds.

    This study contributes to the field of numerical analysis of fixed-point iterations by improving the convergence rate of iterations and reducing the computing time of the inverse matrices. 
    % \sout{Future research outcomes should explore scenarios where the Toeplitz part of tridiagonal near-Toeplitz matrices is weakly dominant, specifically when $|b| = 2$.}
\end{abstract}

   % Our findings indicate superior upper bounds when $b > 2$ and $\wtb \leq 1$, and $b < -2$ and $\wtb \geq -1$. 

\begin{keyword}
    Toeplitz matrices; Diagonal dominance; Exact inverses; Upper bounds;
\end{keyword}

%%%%%%%%% Add Line Numbers %%%%%%%%%%%%
%\linenumbers*[1]
 %  To add line numbers to lines with equations:
% \begin{linenomath*}
% \begin{equation}
% \end{equation}
% \end{linenomath*}

%% User defined 
%\newcommand{\highlight}[1]{\fcolorbox{red}{yellow}{$\displaystyle #1$}}

\newcommand{\bs}[1]{\boldsymbol{#1}}

\section{Introduction}

Banded near-Toeplitz matrices are a  special structured matrix, which appears in many applications such as  in interpolation problems, modelling using differential equations \cite{diele1998use, fischer1974fourier, pozrikidis2014introduction, smith1985numerical, yueh2008explicit}, as well as in time series analysis \cite{luati2010spectral, reddi1984eigenvector} and auto-regressive models \cite{akaike1973block, akansu2012toeplitz, tan2019explicit}. In solving differential equations, for instance, a finite difference discretization of the differential equation results in a system of equations which involved a banded near-Toeplitz matrix, which is perturbed slightly from the Toeplitz structure due to the inclusion of the boundary conditions. In nonlinear differential equations, a fixed-point iteration can be employed to solve the nonlinear system. In this case, one often times is interested in the study of the convergence of the method, which involves the inverse of the underlying near-Toeplitz matrix.

As an illustration, consider the differential equation 
\begin{equation}
	  \displaystyle \frac{d^2 u}{dx^2} = f(u),  \quad  x \in \Omega = (0,L). \notag 
\end{equation}
with some boundary conditions at $x \in \{0,L\}$ and some nonlinear function $u$. Examples are the so-called Fisher equation, with $f(u)=ku(1-u)$, and the Gelfand-Bratu equation, with $f(u)=ke^u$. Consider an approximation to the second derivative using the central difference scheme
 \begin{equation}
      \frac{d^2 u}{dx^2} (x_i) \approx \frac{1}{h^2} \left( u_{i-1} - 2u_{i}+u_{i+1}\right),
\end{equation}
where $x_i = ih \in [0,L]$, with $h$ be a constant step size and $i = 1,\dots,n = L/h$, and $u_i \equiv u(x_i)$. This approximation process results in a system of nonlinear equations, which can be written as 
\begin{equation}
    \widehat{T}_n\boldsymbol{u}=h^2f(\boldsymbol{u}) \label{eq:nonlinear}
\end{equation}
where $\widehat{T}_n$ is an $n$-by-$n$ tridiagonal near Toeplitz matrix and $\boldsymbol{u} = (u_1, \dots, u_n)^T \in \mathbb{R}^n$. Starting from an initial guess of the solution $\boldsymbol{u}^0$, an approximate solution of the differential equation can be computed via the fixed-point recurrence
\begin{equation}
    \boldsymbol{u}^{k+1}=h^2 \widehat{T}^{-1}_n f(\boldsymbol{u}^{k}), \quad k = 1,2,\dots,
\end{equation}
provided that the recurrence converges. Assuming that $f$ is a Lipschitz continuous function in the domain $\Omega$, the sequence of solutions generated by the fixed-point iteration satisfies the inequality, for $p \in \{1,2,\infty\}$,
\begin{align*}
    \| \boldsymbol{u}^{k+1}-\boldsymbol{u}^{k}\|_p =\|h^2 \widehat{T}^{-1}_n \left( f(\boldsymbol{u}^{k})-f(\boldsymbol{u}^{k-1})\right)\|_p \le h^2 \| \widehat{T}^{-1}_n  \|_p  L_c \| \boldsymbol{u}^{k}-\boldsymbol{u}^{k-1}\|_p,
\end{align*}
where $L_c$ is the Lipschitz constant, which depends on the nature of $f(\boldsymbol{u})$. To ensure the convergence of the fixed point iteration method, we require $h^2 L_c \| \widehat{T}^{-1}_n  \|_p < 1$. Here, an upper bound of $\| \widehat{T}^{-1}_n  \|_p$ is useful in the convergence analysis.

%\textcolor{red}{In the context of the boundary value problem outlined by Fischer and Usmani \cite{fischer1969properties}, wherein the differential equation $y'' = f(x) y + g(x)$ is considered, subject to the boundary conditions $f(a) = f_a$, $f(b) = f_b$, and $-\infty < a < b < +\infty$, the application of the finite difference method leads to the derivation of a matrix equation. This equation can be expressed as follows, let $h = (b - a)/(N+1)$, $x_i = a + ih$, $f_i = f(x_i)$, and $g_i = g(x_i)$, for $i = 1, 2, \ldots, N$. The matrix equation takes the form 
%$$AY = C,$$ 
%here $A$ represents a tridiagonal Toeplitz matrix characterized by diagonal elements $a_{i,i} = 2 + h^2 f_i$ and $a_{i, j} = -1$ for $|i - j| = 1$, and $0$ otherwise. The vector $Y$ comprises the approximate solutions $(y_i)$ corresponding to $y(x_i)$ at $x_i$, and $C$ is a vector with components $c_1 = y_0 - h^2 g_1$, $c_N = y_{N+1} - h^2 g_N$, and $c_i = -h^2 g_{i}$ for $i = 2, \ldots, N-1$.
%\\
%In the case where the solution is $y(x) \in C^4$, the truncated error is denoted as
%$t_i = \frac{h^4}{12} y^{(4)}(\theta_i)$, with $x_{i-1} < \theta_i < x_{i+1}$. and defining $e_i = y(x_i) - y_i$, the relationship 
%$$AE = T \rightarrow E = A^{-1} T$$ 
%holds true, where $E = (e_i)$ and $T = (t_i)$. Note,  $$\|E\|_{\infty} \leq \|A^{-1}\|_{\infty} \|T\|_{\infty}$$
%thus underscoring the importance of investigating the inverse of banded Toeplitz matrices and their norms in the context of estimating the error norm.
%}

Inverses of some classes of Toeplitz matrices have been explored extensively  in, for example, ~\cite{dow2002explicit, heinig2013algebraic, huang1997analytical, lewis1982inversion, schlegel1970explicit, usmani1994inversion, wang2015explicit, yamamoto1979inversion}. Related to the inverse Toeplitz matrices are their inverse properties, bounds for norms of inverses~\cite{allgower1973exact, bottcher1999convergence, kurmanbek2021explicit, kurmanbek2021inverse, pan2015estimating, turkmen2002bounds}, and their determinants~\cite{amanbek2020explicit, andjelic2021some, kurmanbek2022proof, lv2008note, shitov2021determinants, sweet1969recursive}.

In this paper, we present inverse properties of $n\times n$ symmetric tridiagonal near-Toeplitz matrix
\begin{equation}
   \wtT_n = \begin{bmatrix}
             \overline{b} & \hat{c} &   & &\\
             \hat{c} & \hat{b} &  \ddots & &\\
               &  \ddots   &  \ddots & \ddots & \\
               &              &  \ddots   & \hat{b} & \hat{c} \\
               &              &  &  \hat{c}  & \overline{b}
            \end{bmatrix} = -\hat{c}
            \underbrace{ \begin{bmatrix}
              \widetilde{b} & -1 &   & &\\
             -1 & b &  \ddots & &\\
               &  \ddots   &  \ddots & \ddots & \\
               &              &  \ddots  & b & -1 \\
               &              &   & -1  & \widetilde{b} 
            \end{bmatrix}}_{\widetilde{T}_n} =: -\hat{c} \, \widetilde{T}_n, \label{eq:matrixTn}
\end{equation}
with $n \ge 3$, such as the one arising in~\eqref{eq:nonlinear}. Here, $b = -\hat{b}/\hat{c}$, and $\widetilde{b} = -\overline{b}/\hat{c}$. Since $\widehat{T}_n$ is a factor of $\widetilde{T}_n$, we will primarily focus on the latter. 

The near-Toeplitz matrix $\wtT_n$ in \eqref{eq:matrixTn} can be expressed as a rank-2 decomposition
\begin{equation}
    \widetilde{T}_n = T_n + (\widetilde{b}-b) UU^T, \label{eq:rank-2}
\end{equation}
where $T_n = \text{tridiag}(-1,b,-1)$ is the Toeplitz part of $\widetilde{T}_n$ and the matrix $U^T$ is given by
\begin{equation}
   U^T = \begin{bmatrix}
                  1 & 0 & \dots & 0 & 0 \\
                  0 & 0 & \dots & 0 & 1
              \end{bmatrix}.
\end{equation}
The inverse of the Toeplitz part $T_n$, if it exists, can be characterized and explicitly expressed by the roots of the polynomial $p(r) = -r^2 + br - 1$ (see \cite{dow2002explicit}). If $T_n$ is diagonally dominant, meaning $|b| \ge 2$, then $p(r)$ has either two distinct real roots or equal roots, with the latter occurring when $b = 2$. Otherwise, $p(r)$ has two complex roots.

Suppose the inverse of $\widetilde{T}_n$ exists. Applying the Sherman-Morrison formula, the inverse of $\widetilde{T}_n$ is given by
\begin{equation}
  \widetilde{T}^{-1}_n = T^{-1}_n - (\widetilde{b} - b) T^{-1}_n U M^{-1} U^T T^{-1}_n, \label{eq:invTtilde}
\end{equation}
where $M = I_2 + (\widetilde{b}-b) U^T T^{-1}_{n} U \in \mathbb{R}^{2\times 2}$. 
Let $\widetilde{T}^{-1}_n =: [\wtt^{-1}_{i,j}]$, $T^{-1}_n =: [t^{-1}_{i,j}]$, and $\beta = \wtb - b$. Then, for $M =: [m_{i,j}]_{i,j = 1,2}$, we have
%\begin{equation}
%   M = \begin{bmatrix}
%              m_{11} & m_{12} \\
%              m_{12} & m_{11}
%          \end{bmatrix} 
%\end{equation}
%with components
\begin{align}
   m_{11} &= m_{22} =  1 + (\widetilde{b} - b) t^{-1}_{1,1} = 1 + \beta t^{-1}_{n, n},\notag \\
   m_{12} &= m_{21} = (\widetilde{b}-b) t^{-1}_{1,n} = \beta t^{-1}_{n, 1}, \notag
\end{align}
due to the symmetry and centrosymmetry of $T_n$. Furthermore,
\begin{equation}
   M^{-1} = \frac{1}{\Delta} \begin{bmatrix}
              m_{11} & -m_{12} \\
              -m_{12} & m_{11}
           \end{bmatrix},
\end{equation}
where $\Delta = m_{11}^2  - m_{12}^2 = (1 + \beta t^{-1}_{n,n} )^2 - \beta^2  (t^{-1}_{n,1})^2$. With this, the $(i,j)$-entry (with $i \geq j$) of the inverse of $\widetilde{T}_n$ is given explicitly by the formula
\begin{equation}
 \wtt_{i,j}^{-1} =t^{-1}_{i,j} - \frac{\beta}{\Delta} \left[ t_{i,1}^{-1}(m_{11} t^{-1}_{1,j} - m_{12} t^{-1}_{n,j}) + t^{-1}_{i,n}(-m_{12} t^{-1}_{1,j} + m_{11} t^{-1}_{n,j})\right].  \label{eq:wtt}
\end{equation}
The trace of $\widetilde{T}^{-1}_n$ can be calculated using the formula
\begin{equation}
  \text{Tr}(\widetilde{T}^{-1}_n) := \sum_{i=1}^n \wtt^{-1}_{i,i} = \sum_{i=1}^n t^{-1}_{i,i} - \frac{\beta}{\Delta} \left\{ m_{11} \sum_{i=1}^n \left( (t^{-1}_{i,1})^2 + (t^{-1}_{n,i})^2 \right) - 2 m_{12} \sum_{i=1}^n t^{-1}_{i,1} t^{-1}_{n,i} \right\}, \label{eq:traceTt}
\end{equation}
due to the symmetry of $T_n$. The sum of the elements in the $i$-th row of the inverse is given by
\begin{align}
   \sum_{j=1}^n \wtt^{-1}_{i,j} &= \sum_{j=1}^n t^{-1}_{i,j} - \frac{\beta}{\Delta}\left[ t_{i,1}^{-1}(m_{11}\sum_{j=1}^n  t^{-1}_{1,j} - m_{12} \sum_{j=1}^n  t^{-1}_{n,j}) + t^{-1}_{i,n}(-m_{12} \sum_{j=1}^n  t^{-1}_{1,j} + m_{11} \sum_{j=1}^n  t^{-1}_{n,j})\right] \notag \\
&= \sum_{j=1}^n t^{-1}_{i,j} - \frac{\beta}{\Delta} \left[ t_{i,1}^{-1}(m_{11}- m_{12}) \sum_{j=1}^n  t^{-1}_{n,j} + t^{-1}_{i,n}(-m_{12}  + m_{11} ) \sum_{j=1}^n  t^{-1}_{n,j} \right]  \notag \\
%&=  \sum_{j=1}^n t^{-1}_{i,j} - \frac{\beta}{\Delta} (m_{11} - m_{12})(t^{-1}_{i,1} + t^{-1}_{i,n}) \sum_{j=1}^n t^{-1}_{n,j} \notag \\
&= \sum_{j=1}^n t^{-1}_{i,j} - \frac{\beta}{m_{11} + m_{12}} (t^{-1}_{i,1} + t^{-1}_{i,n}) \sum_{j=1}^n t^{-1}_{n,j}, \label{eq:rowsumTt}
\end{align}
after making use of the centrosymmetry of $T_n$ and $T^{-1}_n$, with
$$
  m_{11} + m_{12} = 1 + \beta(t^{-1}_{n,1} + t^{-1}_{n,n}).
$$

We note that inverse properties of symmetric tridiagonal near-Toeplitz matrices are discussed in~\cite{tan2019explicit} in relation with their applications in autoregressive modelling. The paper particularly derives an explicit formula for the trace and bounds for the rowsums of the inverse, where $b \neq 2$. Our paper discusses new bounds for the rowsum of  the inverse matrices and extend the analysis to bounds for norms. As we aim at effective bounds, we shall consider several different cases, which are reported separately: (i) $|b| > 2$, (ii) $|b| = 2$,and (iii) $|b| < 2$. This paper is devoted to the first case, where $|b| > 2$, namely where the Toeplitz part is strictly diagonally dominant.

%\sout{In this paper, our objective is to establish bounds for the trace \eqref{eq:traceTt}, the rowsum \eqref{eq:rowsumTt}, and certain norms of the explicit inverse of the near Toeplitz matrix $\widetilde{T}_n$ in~\eqref{eq:matrixTn}. Our primary focus will be on scenarios where the Toeplitz part $T_n$ is diagonally dominant, necessitating that $|b| > 2$.}

The paper is organized as follows. In Section~\ref{sec:b>2}, we will derive  explicit formulas for the trace and bounds for the rowsum and the norms of the inverse matrix when $b > 2$. Section~\ref{sec:b<-2} will delve in the cases where $b < -2$. Section~\ref{sec:numexp} presents numerical experiments demonstrating the effectiveness of the upper bounds and a convergence rate comparison to Fisher's problem. %\sout{Detailed proofs for some of the theorems can be found in the appendices.}

\section{Preliminary results}

The inverse of the Toeplitz matrix $T_n = \text{tridiag}(-1,b,-1)$, with $|b| > 2$, in~\eqref{eq:rank-2} can be characterized by the two roots of the polynomial $p(r) = -r^2 + br - 1$: $r_1 = \frac{1}{2}(b + \sqrt{b^2-4})$ and $r_2 = \frac{1}{2}(b - \sqrt{b^2 - 4})$; see~\cite{dow2002explicit}. Let $\gamma_k = r_1^k - r_2^k$ with $\gamma_0 = 0$ and $\gamma_1 = \sqrt{b^2-4}$. Its inverse is given by
\begin{align}
  t^{-1}_{i,j} = \begin{cases}
                               \displaystyle \frac{\gamma_j \gamma_{n+1-i}}{\sqrt{b^2 - 4} \gamma_{n+1}},& i \ge j, \\
                               t^{-1}_{j,i},& i < j.
                       \end{cases} \label{eq:invTij}
\end{align}

Now, we list a couple of intermediate results, pertaining to $r_1, \gamma_k$, and $b$.

\begin{lemma} \label{lem:gamm1}
Let $b > 2$. For any integer $k \ge 2$, the following holds true:
\begin{enumerate}
\item[(i)] $\gamma_k + \gamma_{k-2} = b \gamma_{k-1}$;
\item[(ii)] $\frac{b}{2} < r_1 \le \frac{\gamma_{k+1}}{\gamma_k} \le b$;
\item[(iii)] $\gamma_{n+1-i}\gamma_{i+1} - \gamma_{i}\gamma_{n-i} = \gamma_{n+1}\gamma_{1}$.
\end{enumerate}
\end{lemma}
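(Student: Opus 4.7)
For part (i), the plan is to exploit that $r_1, r_2$ are roots of $p(r) = -r^2 + br - 1$, so each satisfies $r_i^2 = b r_i - 1$. Multiplying by $r_i^{k-2}$ and subtracting the $i=2$ identity from the $i=1$ identity immediately yields $\gamma_k + \gamma_{k-2} = b\gamma_{k-1}$. This is essentially a one-line argument.

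For part (ii), I would start by noting the Vieta relations $r_1 + r_2 = b$ and $r_1 r_2 = 1$, which combined with $b>2$ imply $r_1 > 1 > r_2 > 0$. The strict inequality $b/2 < r_1$ then follows directly from $r_1 = (b+\sqrt{b^2-4})/2$. For the lower bound on the ratio, I would compute
$$\gamma_{k+1} - r_1\gamma_k \;=\; (r_1^{k+1} - r_2^{k+1}) - r_1(r_1^k - r_2^k) \;=\; r_2^k(r_1 - r_2) \;\ge\; 0.$$
For the upper bound, I would expand
$$b\gamma_k - \gamma_{k+1} \;=\; (r_1+r_2)(r_1^k - r_2^k) - (r_1^{k+1} - r_2^{k+1}) \;=\; r_1 r_2(r_1^{k-1} - r_2^{k-1}) \;=\; \gamma_{k-1} \;\ge\; 0,$$
using $r_1 r_2 = 1$. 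As an alternative, one can induct on $k$ via part (i): the base case $\gamma_2/\gamma_1 = r_1 + r_2 = b$ is trivial, and the inductive step $\gamma_{k+1}/\gamma_k = b - \gamma_{k-1}/\gamma_k \le b$ follows since $\gamma_{k-1}/\gamma_k \ge 0$.

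For part (iii), the plan is to expand both sides in pure powers of $r_1$ and $r_2$ and collapse mixed terms via $r_1 r_2 = 1$. Multiplying out, the LHS contains the diagonal contribution $r_1^{n+2} + r_2^{n+2} - r_1^n - r_2^n$ plus four cross terms of the form $\pm r_1^a r_2^b$ with $a+b = n+2$; writing $r_2 = 1/r_1$ shows that these four cross terms reduce to $\pm r_1^{n-2i} \pm r_1^{2i-n}$ and cancel in pairs. The RHS expands to $(r_1^{n+2} + r_2^{n+2}) - r_1 r_2(r_1^n + r_2^n)$, which matches the LHS after one more application of $r_1 r_2 = 1$.

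The main obstacle is the bookkeeping in (iii): one must carefully track the four cross terms, convert them to powers of a single variable, and verify the pairwise cancellation. Conceptually the identity is a Catalan-style convolution for the sequence $(\gamma_k)$, and the proof boils down to exploiting the $r_1 \leftrightarrow r_2$ symmetry together with $r_1 r_2 = 1$. Parts (i) and (ii) are standard and short.
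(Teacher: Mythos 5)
Your proposal is correct and follows essentially the same route as the paper: all three parts are proved by direct manipulation of the closed forms $\gamma_k = r_1^k - r_2^k$ together with $r_1+r_2=b$ and $r_1r_2=1$, and your part (ii) lower bound via the exact identity $\gamma_{k+1}-r_1\gamma_k = r_2^k(r_1-r_2)\ge 0$ is a slightly cleaner version of the paper's ratio estimate. The only quibble is cosmetic: in part (iii) two of the four cross terms have exponent sum $n$ rather than $n+2$, but your stated reduction to $\pm r_1^{n-2i}\pm r_1^{2i-n}$ and the pairwise cancellation are exactly right.
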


\begin{proof}
Using $\gamma_k = r_1^k - r_2^k$,
\begin{equation*}
\gamma_k + \gamma_{k-2} = r_1^{k-1}(r_1 + r_1^{-1}) - r_2^{k-1}(r_2 + r_2^{-1}),
\end{equation*}
from which Part (i) is derived via a direct calculation. From Part (i), $\gamma_{k+1} = b \gamma_k - \gamma_{k-1} \le b \gamma_k$. Also,
\begin{align*}
  \frac{\gamma_{k+1}}{\gamma_k} &= \frac{r_1^{k+1} - r_2^{k+1}}{r_1^k - r_2^k} = r_1 + r_2 - \frac{r_1^k r_2 - r_2^k r_1}{r_1^k - r_2^k} = b - \frac{r_1^{k-1} - r_2^{k-1}}{r_1^k - r_2^k} = b -  \frac{r_2 - r_1(r_2/r_1)^k}{1 - (r_2/r_1)^k},
\end{align*}
after using the fact that $r_1 r_2 = 1$. Since $r_1 > r_2$, 
\begin{align*}
   \frac{\gamma_{k+1}}{\gamma_k} \ge b - \frac{r_2(1 - (r_2/r_1)^k)}{1 - (r_2/r_1)^k} = b - r_2 = r_1 > \frac{b}{2},
\end{align*}
completing Part (ii). For Part (iii), using $r_1 r_2 = 1$,
\begin{align*}
    \gamma_{n+1-i}\gamma_{i+1} - \gamma_{i}\gamma_{n-i} &= r^{n+2}_{1} - r^{n-2i}_{1} - r^{n-2i}_{2} + r^{n+2}_{2} - r^{n}_{1} + r^{n-2i}_{1} + r^{n-2i}_{2} - r^{n}_{2}\\
    &= r^{n+1}_{1} r^{1}_{1} - r^{n+1}_{1} r^{1}_{2} - r^{n+1}_{2} r^{1}_{1} + r^{n+1}_{2} r^{1}_{2} = \gamma_{n+1} \gamma_{1}.
\end{align*}
\end{proof}

\begin{lemma}  \label{lem:gamm2}
Let $|b| > 2$. For any integer $p \ge 1$, the following relations hold:
\begin{enumerate}
  \item[(i)] $\displaystyle \sum_{k=1}^p \gamma_k = \frac{1}{b-2}(\gamma_{p+1} - \gamma_p - \gamma_1)$;
  \item[(ii)] $\displaystyle \sum_{k=1}^p k \gamma_k = \frac{1}{b-2}(p \gamma_{p+1} - (p+1) \gamma_p)$.
\end{enumerate}
\end{lemma}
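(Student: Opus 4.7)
The plan is to exploit the three-term recurrence $\gamma_{k+1} + \gamma_{k-1} = b\gamma_k$ established in Lemma \ref{lem:gamm1}(i) to convert each sum into a telescoping sum. The key observation is that the recurrence rearranges as
\begin{equation*}
    (b-2)\gamma_k = (\gamma_{k+1} - \gamma_k) - (\gamma_k - \gamma_{k-1}) = \Delta_k - \Delta_{k-1},
\end{equation*}
where I set $\Delta_k := \gamma_{k+1} - \gamma_k$. This identity is valid for all $k \ge 1$ (noting $\gamma_0 = 0$), and the coefficient $b-2$ on the left is exactly what appears in the denominators of the claimed formulas.

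For part (i), I would sum the identity above from $k=1$ to $k=p$, obtaining
\begin{equation*}
    (b-2)\sum_{k=1}^p \gamma_k = \Delta_p - \Delta_0 = (\gamma_{p+1} - \gamma_p) - (\gamma_1 - \gamma_0) = \gamma_{p+1} - \gamma_p - \gamma_1,
\end{equation*}
and divide by $b-2$ (which is nonzero since $b > 2$, or more generally $|b| > 2$ so $b \neq 2$; the case $b < -2$ works identically, as the recurrence and $\gamma_0 = 0$ do not depend on the sign of $b$).

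For part (ii), the idea is to multiply the telescoping identity by $k$ and apply summation by parts. Writing
\begin{equation*}
    (b-2)\sum_{k=1}^p k\gamma_k = \sum_{k=1}^p k\Delta_k - \sum_{k=1}^p k\Delta_{k-1},
\end{equation*}
I would reindex the second sum by $j = k-1$ to obtain $\sum_{j=0}^{p-1}(j+1)\Delta_j$, then combine with the first sum. The interior terms collapse into $-\sum_{k=1}^{p-1}\Delta_k$, which telescopes to $-(\gamma_p - \gamma_1)$, leaving the boundary contribution $p\Delta_p - \Delta_0 = p(\gamma_{p+1} - \gamma_p) - \gamma_1$. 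Collecting everything and using $\gamma_0 = 0$ yields $(b-2)\sum_{k=1}^p k\gamma_k = p\gamma_{p+1} - (p+1)\gamma_p$, which gives the claim after division by $b-2$.

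There is no real obstacle here: the main care is in bookkeeping the shift of index and boundary terms in the Abel summation for part (ii), and confirming that the derivation uses only the recurrence and the initial value $\gamma_0 = 0$, so the same proof covers both $b > 2$ and $b < -2$.
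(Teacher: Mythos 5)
Your proof is correct, but it takes a genuinely different route from the paper's. The paper works directly from the closed form $\gamma_k = r_1^k - r_2^k$: part (i) is two geometric series combined over the common denominator $(r_1-1)(r_2-1) = 2 - (r_1+r_2) = 2-b$, and part (ii) is obtained by differentiating the finite geometric sum formula and simplifying. You instead rewrite the three-term recurrence as $(b-2)\gamma_k = \Delta_k - \Delta_{k-1}$ with $\Delta_k = \gamma_{k+1}-\gamma_k$ and telescope, using Abel summation for the weighted sum in (ii). Both arguments are sound (I checked your boundary bookkeeping in (ii): $p\Delta_p - \Delta_0 - \sum_{k=1}^{p-1}\Delta_k = p\gamma_{p+1} - p\gamma_p - \gamma_1 - (\gamma_p - \gamma_1)$ does reduce to $p\gamma_{p+1} - (p+1)\gamma_p$, and both formulas check out at $p=1$ using $\gamma_2 = b\gamma_1$). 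Your approach is more structural: it explains where the factor $b-2$ comes from (it is the value of the characteristic polynomial-like quantity $2 - b$ up to sign, i.e., the recurrence applied to differences) and uses only the recurrence plus $\gamma_0 = 0$, so it transparently covers both $b>2$ and $b<-2$. One small citation caveat: Lemma~\ref{lem:gamm1}(i) is stated in the paper only for $b>2$, so to invoke it for $|b|>2$ you should note (as you implicitly do) that the identity $\gamma_{k+1}+\gamma_{k-1}=b\gamma_k$ is purely algebraic, following from $r_1+r_2=b$ and $r_1r_2=1$ irrespective of the sign of $b$. The paper's computation, by contrast, is self-contained for $|b|>2$ but is heavier on algebra, particularly in part (ii) where the derivative of the geometric sum must be simplified using $(r_i-1)^2 = r_i(b-2)$.
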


\begin{proof}
For Part (i),
\begin{align*}
  \sum_{k=1}^p \gamma_k 
    &= \sum_{k=1}^p (r_1^k - r_2^k) =  \sum_{k=0}^p r_1^k - 1 - \sum_{k=0}^p r_2^k + 1  = \frac{r_1^{p+1} - 1}{r_1 - 1} - \frac{r_2^{p+1} - 1}{r_2 - 1} \\
    &= \frac{r_1^{p+1} - r_2^{p+1} - (r_1^p - r_2^p) - (r_1 - r_2)}{b-2} = \frac{1}{b-2}(\gamma_{p+1} - \gamma_p - \gamma_1).
\end{align*}
For Part (ii),
\begin{align*}
  \sum_{k=1}^p k\gamma_k 
    &= \sum_{k=1}^{p} k(r_1^k - r_2^k) = r_1 \sum_{k=1}^p kr_1^{k-1} - r_2 \sum_{k=1}^p kr_2^{k-1} \\
    &= r_1 \left( \sum_{k=0}^p r_1^k \right)' - r_2 \left( \sum_{k=0}^p r_2^k\right)' = r_1 \left(  \frac{r_1^{p+1} -1}{r_1-1} \right)' - r_2 \left( \frac{r_2^{p+1} - 1}{r_2 -1}\right)' \\
    &= \frac{pr_1^{p+2} - (p+1)r_1^{p+1} + r_1}{(r_1-1)^2} - \frac{pr_2^{p+2} - (p+1)r_2^{p+1} + r_2}{(r_2 -1)^2} \\
    &= \frac{p r_1^{p+1} - (p+1)r_1^p + 1}{b-2} - \frac{p r_2^{p+1} - (p+1)r_2^p + 1}{b-2}\\
    &= \frac{1}{b-2}(p\gamma_{p+1} - (p+1)\gamma_{p}).
\end{align*}
\end{proof}

\begin{lemma} \label{lem:gamm3} 
For $b > 2$ and $n \ge 3$, 
$$b - 1 < b - \frac{2}{b} \leq \frac{\gamma_{n+1}}{\gamma_n + \gamma_1} < r_1 < b.$$
\end{lemma}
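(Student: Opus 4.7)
The statement is a chain of four inequalities. The leftmost, $b - 1 < b - 2/b$, is equivalent to $b > 2$, and the rightmost, $r_1 < b$, follows from $r_1 + r_2 = b$ together with $r_2 = (b - \sqrt{b^2-4})/2 > 0$. The substantive task is to bound $\gamma_{n+1}/(\gamma_n + \gamma_1)$ from above by $r_1$ and from below by $b - 2/b$.

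For the upper bound, I plan to compute $\gamma_{n+1} - r_1\gamma_n$ directly from $\gamma_k = r_1^k - r_2^k$. Using $r_1 r_2 = 1$ yields $\gamma_{n+1} - r_1\gamma_n = r_2^{n-1}(1 - r_2^2)$, so the inequality $\gamma_{n+1} < r_1(\gamma_n + \gamma_1)$ reduces to $r_2^{n-1}(1-r_2^2) < r_1\gamma_1 = r_1^2 - 1$. Writing $r_1^2 - 1 = (1 - r_2^2)/r_2^2$ and cancelling the positive factor $1 - r_2^2$ leaves $r_2^{n+1} < 1$, which is immediate from $0 < r_2 < 1$.

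For the lower bound, I plan to clear the positive denominator $b$, turning the target into $(b^2-2)(\gamma_n + \gamma_1) \le b\gamma_{n+1}$, and then apply Lemma~\ref{lem:gamm1}(i) twice: $b\gamma_{n+1} = b^2\gamma_n - b\gamma_{n-1}$ and $b\gamma_{n-1} = \gamma_n + \gamma_{n-2}$. These collapse the inequality to the compact form $\gamma_n - \gamma_{n-2} \ge (b^2-2)\gamma_1$. Factoring both sides via $r_1^2 - 1 = r_1(r_1 - r_2)$ and $1 - r_2^2 = r_2(r_1 - r_2)$ gives the identity $\gamma_n - \gamma_{n-2} = \gamma_1(r_1^{n-1} + r_2^{n-1})$, while $b^2 - 2 = r_1^2 + r_2^2$. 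After cancelling $\gamma_1 > 0$, the inequality becomes $g_{n-1} \ge g_2$, where $g_k := r_1^k + r_2^k$. Since $g_k$ satisfies the same three-term recurrence $g_{k+1} = b g_k - g_{k-1}$ with $g_0 = 2$ and $g_1 = b > 2$, a short induction using $b > 2$ shows that $g_k$ is strictly increasing in $k$; the claim follows because $n - 1 \ge 2$, with equality precisely when $n = 3$.

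The main obstacle is the lower bound, and specifically the two successive reductions: first, collapsing the original inequality to $\gamma_n - \gamma_{n-2} \ge (b^2-2)\gamma_1$ via repeated use of Lemma~\ref{lem:gamm1}(i); and second, extracting the common factor $\gamma_1$ from both sides through the factorizations $r_1^2 - 1 = r_1\gamma_1$ and $1 - r_2^2 = r_2\gamma_1$, which turns the problem into the clean monotonicity statement $r_1^{n-1} + r_2^{n-1} \ge r_1^2 + r_2^2$. Once those reductions are in hand, everything else is routine.
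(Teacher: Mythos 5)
Your proof is correct; every reduction checks out, including the identity $\gamma_{n+1}-r_1\gamma_n=r_2^{n-1}(1-r_2^2)$, the collapse of the lower bound to $\gamma_n-\gamma_{n-2}\ge (b^2-2)\gamma_1$, and the equality case at $n=3$ (indeed $\gamma_4/(\gamma_3+\gamma_1)=b-2/b$). The organization, however, differs from the paper's. The paper first rewrites $\frac{\gamma_{n+1}}{\gamma_n+\gamma_1}=b-\frac{\gamma_{n-1}+b\gamma_1}{\gamma_n+\gamma_1}$ and then squeezes the single auxiliary ratio $\frac{\gamma_{n-1}+b\gamma_1}{\gamma_n+\gamma_1}$ between $r_2$ and $2/b$: it factors numerator and denominator so the ratio becomes $\bigl(r_1^{(n-3)/2}+r_2^{(n-3)/2}\bigr)/\bigl(r_1^{(n-1)/2}+r_2^{(n-1)/2}\bigr)$, proves this is decreasing in $n$ by computing consecutive differences, and evaluates the maximum at $n=3$. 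You instead treat the two nontrivial inequalities separately, clearing denominators so that each reduces to a one-line statement about the sequence $g_k=r_1^k+r_2^k$: the upper bound to $r_2^{n+1}<1$ and the lower bound to $g_{n-1}\ge g_2$ via monotonicity of $g_k$. Both arguments ultimately rest on the same fact (monotonicity of $r_1^k+r_2^k$, equivalently $r_1>1>r_2>0$ with $r_1r_2=1$), but your route avoids the half-integer exponents and the difference-of-ratios computation in the paper's appendix, and makes the equality case $n=3$ transparent; the paper's version has the mild advantage of packaging both bounds as two-sided estimates on one monotone quantity, which it reuses in the proof of Lemma 3.1.
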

\noindent The detailed proof of Lemma~\ref{lem:gamm3} is provided in Appendix \ref{E}.  %\textcolor{blue}{The lemma doesn't hold for $b<-2$!} %\sout{Consequently, for $n \geq 3$,}
%$$
%\stkout{b - 1 < b - \frac{2}{b} \leq \frac{\gamma_{n+1}}{\gamma_n + \gamma_1} < r_1 < b.}
%$$

\section{Trace, rowsum, and norms of the inverse matrix: $b > 2$} \label{sec:b>2}

In this section, we derive inverse properties of the near Toeplitz matrix $\wtT_n$, when $b > 2$. Our first result is on non-singularity of $\wtT_n$.

\begin{lemma}[Non-singularity of $\wtT_n$] \label{lem:invertTt}
Let $\wtT_n$ be as defined in~\eqref{eq:matrixTn} with $b > 2$. Then $\wtT_n$ is nonsingular if $\wtb \neq b - \frac{\gamma_{n+1}}{\gamma_n + \gamma_1} := \wtb_1$ or $\wtb \neq b - \frac{\gamma_{n+1}}{\gamma_n - \gamma_1} := \wtb_2$. Furthermore, it holds that $0 \leq \wtb_2 < \wtb_1 < 1$.
\end{lemma}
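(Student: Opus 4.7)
The plan is to reduce the non-singularity question to the invertibility of the $2 \times 2$ matrix $M$ from the Sherman--Morrison decomposition~\eqref{eq:invTtilde}, since $T_n$ is itself invertible (indeed strictly diagonally dominant for $b>2$). Concretely, I would argue $\widetilde{T}_n$ is non-singular iff $\Delta = \det(M) = m_{11}^2 - m_{12}^2 = (m_{11}-m_{12})(m_{11}+m_{12}) \neq 0$, and then characterise the two exceptional values of $\widetilde{b}$ as the zeros of these two linear factors in $\widetilde{b}$.

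First I would substitute the explicit values of the relevant corner entries of $T_n^{-1}$ from \eqref{eq:invTij}: $t_{n,n}^{-1} = \gamma_n/\gamma_{n+1}$ and $t_{n,1}^{-1} = \gamma_1^2/(\sqrt{b^2-4}\,\gamma_{n+1}) = \gamma_1/\gamma_{n+1}$ (using $\gamma_1 = \sqrt{b^2-4}$). Setting $\beta = \widetilde{b}-b$, this gives
\begin{equation*}
m_{11} \pm m_{12} = 1 + \beta\,\frac{\gamma_n \pm \gamma_1}{\gamma_{n+1}}.
\end{equation*}
Solving $m_{11}+m_{12}=0$ and $m_{11}-m_{12}=0$ for $\widetilde{b}$ yields exactly the two excluded values $\widetilde{b}_1 = b - \gamma_{n+1}/(\gamma_n+\gamma_1)$ and $\widetilde{b}_2 = b - \gamma_{n+1}/(\gamma_n-\gamma_1)$ stated in the lemma (the sign mapping can be noted explicitly, and both denominators are positive because Lemma~\ref{lem:gamm1}(ii) gives $\gamma_{k+1}/\gamma_k > 1$, so $\gamma_n > \gamma_1$ for $n\ge 3$).

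It remains to verify $0 \le \widetilde{b}_2 < \widetilde{b}_1 < 1$. The inequality $\widetilde{b}_1 < 1$ is a direct restatement of the bound $\gamma_{n+1}/(\gamma_n+\gamma_1) > b-1$ supplied by Lemma~\ref{lem:gamm3}. The inequality $\widetilde{b}_2 < \widetilde{b}_1$ reduces to $\gamma_n-\gamma_1 < \gamma_n+\gamma_1$, which holds because $\gamma_1>0$. Finally, for $\widetilde{b}_2 \ge 0$, I would rewrite the inequality $\gamma_{n+1}/(\gamma_n-\gamma_1) \le b$ using Lemma~\ref{lem:gamm1}(i) in the form $\gamma_{n+1} = b\gamma_n - \gamma_{n-1}$; after cancellation it becomes $\gamma_{n-1} \ge b\gamma_1 = \gamma_2$, which is immediate from the monotonicity $\gamma_{k+1} > \gamma_k$ (itself a consequence of Lemma~\ref{lem:gamm1}(ii)) whenever $n-1 \ge 2$, i.e.\ $n \ge 3$.

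None of the individual steps looks genuinely hard, since Lemmas~\ref{lem:gamm1} and~\ref{lem:gamm3} do most of the heavy lifting; the main place one has to be careful is the sign bookkeeping when solving for $\widetilde{b}_1$ and $\widetilde{b}_2$ (to make sure the factor $m_{11}+m_{12}$ corresponds to $\widetilde{b}_1$ and $m_{11}-m_{12}$ corresponds to $\widetilde{b}_2$, so that the expressions stated in the lemma are matched exactly) and in justifying that the hypothesis $n \ge 3$ — rather than $n \ge 2$ — is precisely what is needed to obtain $\widetilde{b}_2 \ge 0$.
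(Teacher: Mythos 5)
Your proposal is correct and follows essentially the same route as the paper: identify the singular values of $\widetilde{b}$ as the zeros of the two linear factors of $\Delta = (m_{11}+m_{12})(m_{11}-m_{12})$, get $\widetilde{b}_1<1$ from Lemma~\ref{lem:gamm3}, and reduce $\widetilde{b}_2\ge 0$ to $\gamma_{n-1}\ge b\gamma_1=\gamma_2$ via Lemma~\ref{lem:gamm1}(i). The only cosmetic difference is that the paper expands $\Delta$ as a quadratic in $\widetilde{b}-b$ rather than keeping it factored, and verifies $\widetilde{b}_1>\widetilde{b}_2$ by computing the difference explicitly.
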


\begin{proof}
The first term on the right-hand side of~\eqref{eq:wtt} exists for $b > 2$, and nonsingularity requires that the second term involving $\Delta$ does not vanish. In this case, with $t^{-1}_{n,1} = \gamma_1/\gamma_{n+1}$ and $t^{-1}_{n,n} = \gamma_n/\gamma_{n+1}$, we have
\[
\Delta = \frac{\gamma_{n+1}^2 + 2(\wtb - b)\gamma_n \gamma_{n+1} + (\wtb - b)^2 (\gamma_n^2 - \gamma_1^2)}{\gamma_{n+1}^2} = 0
\]
for $\wtb \in \{\wtb_1,\wtb_2\}$ defined in the lemma. It is worth noting that $\wtb_1 - \wtb_2 = \frac{2\gamma_1 \gamma_{n+1}}{\gamma_n^2 - \gamma_1^2} > 0$, which implies that $\wtb_1 > \wtb_2$. Furthermore, with $b - 1 < \frac{\gamma_{n+1}}{\gamma_n + \gamma_1}$ (Lemma~\ref{lem:gamm3}), we have $b - \frac{\gamma_{n+1}}{\gamma_n + \gamma_1} = \wtb_1 < 1$. Finally, for non-negativity of $\wtb_2$,
\[
\wtb_2 = b - \frac{\gamma_{n+1}}{\gamma_n - \gamma_1} = \frac{b \gamma_n - b \gamma_1 - \gamma_{n+1}}{\gamma_n - \gamma_1} = \frac{\gamma_{n-1} - b \gamma_1}{\gamma_n - \gamma_1} = \frac{\gamma_{n-1} - \gamma_2}{\gamma_n - \gamma_1} \ge 0,
\]
after using Lemma~\ref{lem:gamm1}(i) and the fact that $\gamma_2 = b \gamma_1$, with equality occurring when $n = 3$.
\end{proof}

Next, we introduce the compact form of the element of $\widetilde{T}^{-1}_{n}$. This form is particularly important when we derive some upper bounds when considering the case $\widetilde{b} < 1$. The proof is presented in Appendix~\ref{F}.

\begin{lemma}\label{lem:compact_t}
Let $b > 2$ and $\widetilde{b}$ such that $\widetilde{T}_{n}$ is invertible. For $i \geq j$, we have 
\[
\widetilde{t}^{-1}_{i, j} = C (\gamma_{n+1-i} + \beta \gamma_{n-i})(\gamma_{j} + \beta \gamma_{j-1}),
\]
where $C = \displaystyle \frac{\gamma_{n+1}}{\gamma_1 ((\gamma_{n+1} + \beta \gamma_n)^2 - (\beta \gamma_1)^2)}$ and $\beta = \widetilde{b} - b$.
\end{lemma}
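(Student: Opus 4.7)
The plan is to verify directly that the matrix $A$ with entries $A_{i,j} = C\,u_i v_j$ for $i\ge j$, extended symmetrically by $A_{j,i}=A_{i,j}$, satisfies $\widetilde{T}_n A = I$, where $u_i := \gamma_{n+1-i}+\beta\gamma_{n-i}$ and $v_j := \gamma_j+\beta\gamma_{j-1}$. Since $\widetilde{T}_n$ is invertible by hypothesis, this identifies $A$ uniquely with $\widetilde{T}_n^{-1}$. An equivalent route is to substitute the explicit formulas for $t^{-1}_{i,j}$, $m_{11}$, $m_{12}$, and $\Delta$ (all in terms of $\gamma_k$) into~\eqref{eq:wtt} and refactor, but the direct verification below keeps the algebra cleaner.

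Step 1 (off-diagonals). Consider $(\widetilde{T}_n A)_{i,j}=0$ for $j\neq i$. For an interior row $2\le i\le n-1$ with $i>j$, the three entries $A_{k,j}$ for $k\in\{i-1,i,i+1\}$ all lie in the lower triangle (since $i-1\ge j$), so they share the factor $Cv_j$, leaving $-u_{i-1}+bu_i-u_{i+1}$, which vanishes by linearity of the recurrence in Lemma~\ref{lem:gamm1}(i) applied to $\gamma_{n+1-i}$ and $\gamma_{n-i}$. The case $i<j$ is dual, factoring $Cu_j$ and using the $v$-recurrence; when the triple straddles the diagonal (so one entry is accessed via symmetry), the common factor is still $Cv_j$ or $Cu_j$ and the same recurrence argument applies. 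The corner rows $i=1$ and $i=n$ reduce to the identities $\widetilde{b}v_1=v_2$ and $\widetilde{b}u_n=u_{n-1}$, both immediate from $u_n=v_1=\gamma_1$, $\gamma_2=b\gamma_1$, and $\widetilde{b}=b+\beta$.

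Step 2 (diagonals). For $2\le i\le n-1$, using $bv_i=v_{i+1}+v_{i-1}$ the diagonal equation $(\widetilde{T}_n A)_{i,i}=1$ collapses to $C\bigl(u_i v_{i+1}-u_{i+1}v_i\bigr)=1$. Expanding gives the four differences
\[
(\gamma_{n+1-i}\gamma_{i+1}-\gamma_{n-i}\gamma_i)+\beta(\gamma_{n+1-i}\gamma_i-\gamma_{n-i}\gamma_{i-1})+\beta(\gamma_{n-i}\gamma_{i+1}-\gamma_{n-i-1}\gamma_i)+\beta^2(\gamma_{n-i}\gamma_i-\gamma_{n-i-1}\gamma_{i-1}).
\]
Each parenthesised difference is an instance of $\gamma_{a+1}\gamma_{b+1}-\gamma_a\gamma_b = \gamma_1\gamma_{a+b+1}$, a one-line consequence of $\gamma_k=r_1^k-r_2^k$ with $r_1 r_2=1$ (and a generalisation of Lemma~\ref{lem:gamm1}(iii)). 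The four terms evaluate respectively to $\gamma_1\gamma_{n+1}$, $\gamma_1\gamma_n$, $\gamma_1\gamma_n$, and $\gamma_1\gamma_{n-1}$, giving $u_i v_{i+1}-u_{i+1}v_i = \gamma_1(\gamma_{n+1}+2\beta\gamma_n+\beta^2\gamma_{n-1})$, independent of $i$. The corner diagonal equations at $i\in\{1,n\}$ reduce, using $u_n=v_1=\gamma_1$ and the Chebyshev recurrence once more, to the same bracket.

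Step 3 (identify $C$). Inverting gives $C=1/\bigl[\gamma_1(\gamma_{n+1}+2\beta\gamma_n+\beta^2\gamma_{n-1})\bigr]$, and matching with the stated value boils down to $(\gamma_{n+1}+\beta\gamma_n)^2-(\beta\gamma_1)^2 = \gamma_{n+1}(\gamma_{n+1}+2\beta\gamma_n+\beta^2\gamma_{n-1})$, equivalently $\gamma_n^2-\gamma_1^2=\gamma_{n+1}\gamma_{n-1}$, another one-line computation from $r_1 r_2=1$. The main obstacle is Step~2: correctly parsing each of the four $\gamma$-products as a shifted instance of $\gamma_{a+1}\gamma_{b+1}-\gamma_a\gamma_b=\gamma_1\gamma_{a+b+1}$, and checking that the corner-row diagonals deliver the same bracket $\gamma_{n+1}+2\beta\gamma_n+\beta^2\gamma_{n-1}$, so that a single constant $C$ serves every row.
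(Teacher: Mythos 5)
Your proof is correct, but it takes a genuinely different route from the paper. The paper obtains the compact form by substituting the explicit $\gamma$-expressions for $t^{-1}_{i,j}$, $m_{11}$, $m_{12}$ and $\Delta$ into the Sherman--Morrison formula~\eqref{eq:wtt} and refactoring, using the identity $\gamma_n\gamma_{n+1-j}-\gamma_1\gamma_j=\gamma_{n+1}\gamma_{n-j}$ and its shifts to collapse the resulting polynomial in $\beta$ into the product $(\gamma_{n+1-i}+\beta\gamma_{n-i})(\gamma_j+\beta\gamma_{j-1})$. You instead bypass Sherman--Morrison entirely and verify directly that the candidate rank-structured matrix is a right inverse of $\widetilde{T}_n$, which identifies it with $\widetilde{T}_n^{-1}$ by the invertibility hypothesis. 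I checked the key computations: the three-term recurrences $u_{i-1}+u_{i+1}=bu_i$ and $v_{j-1}+v_{j+1}=bv_j$ follow from Lemma~\ref{lem:gamm1}(i); the corner identities $\widetilde{b}v_1=v_2$ and $\widetilde{b}u_n=u_{n-1}$ follow from $\gamma_0=0$, $\gamma_2=b\gamma_1$ and $\widetilde{b}=b+\beta$; your general identity $\gamma_{a+1}\gamma_{b+1}-\gamma_a\gamma_b=\gamma_1\gamma_{a+b+1}$ is valid and yields $u_iv_{i+1}-u_{i+1}v_i=\gamma_1(\gamma_{n+1}+2\beta\gamma_n+\beta^2\gamma_{n-1})$ independently of $i$; the corner diagonal $\widetilde{b}u_1-u_2$ produces the same bracket; and the final matching of constants reduces to $\gamma_n^2-\gamma_1^2=\gamma_{n+1}\gamma_{n-1}$, which is the $(a,b)=(n,n-2)$ instance of your identity. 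What your approach buys is a self-contained verification that does not depend on the correctness of the earlier Sherman--Morrison setup, and it re-derives the nonsingularity condition for free (the bracket $\gamma_{n+1}+2\beta\gamma_n+\beta^2\gamma_{n-1}$ is a factored form of $\Delta\gamma_{n+1}$); what it loses is the explanation of how one would discover the compact form in the first place, which the paper's refactoring of~\eqref{eq:wtt} makes transparent.
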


\subsection{Trace of the inverse}

The general formula for the trace of the inverse matrix $\wtT_n^{-1}$ is given by~\eqref{eq:traceTt}. With $t^{-1}_{n,1} = \gamma_1/\gamma_{n+1}$, $t^{-1}_{n,n} = \gamma_n/\gamma_{n+1}$, $m_{11} = 1 + (\wtb-b)\gamma_n/\gamma_{n+1}$,  and $m_{12} = (\wtb-b)\gamma_1/\gamma_{n+1}$, the diagonal entries of the inverse matrix $\wtT^{-1}_n$ are given by %\textcolor{red}{[should we add a derivation of it?]}
\begin{align}
   \wtt^{-1}_{i,i} &= t^{-1}_{i,i} - \frac{\wtb-b}{\Delta} \left[ \frac{\gamma_{n+1} + (\wtb-b)\gamma_n}{\gamma_{n+1}}\left( \left( \frac{ \gamma_{n+1-i}}{ \gamma_{n+1}} \right)^2 + \left( \frac{ \gamma_i}{ \gamma_{n+1}} \right)^2\right) -  2(\wtb-b)\frac{\gamma_1}{\gamma_{n+1}} \frac{\gamma_i \gamma_{n+1-i}}{\gamma_{n+1}^2} \right] \notag \\
    &= \left[ 1 + \frac{2(\wtb-b)^2 (b^2-4)}{\Delta \gamma_{n+1}^2} \right] t^{-1}_{i,i} - \frac{(\wtb-b)(\gamma_{n+1}+(\wtb-b)\gamma_n)}{\Delta \gamma^3_{n+1}} (\gamma_i^2 + \gamma_{n+1-i}^2).
\end{align}
The trace formula~\eqref{eq:traceTt} can then be written as
\begin{align}
   \text{Tr}(\wtT^{-1}_n) = \left[ 1 + \frac{2(\wtb-b)^2 (b^2-4)}{\Delta \gamma_{n+1}^2} \right] \sum_{i = 1}^n t^{-1}_{i,i} - \frac{(\wtb-b)(\gamma_{n+1}+(\wtb-b)\gamma_n)}{\Delta \gamma^3_{n+1}} \sum_{i=1}^n \left\{ \gamma_i^2 + \gamma_{n+1-i}^2 \right\}.
\end{align}
The first sum in the right-hand side is the trace of $T^{-1}_n$, whose formula is given by the following lemma:
\begin{lemma} \label{lem:traceTn}
For $b > 2$ in $T_n$, 
\begin{align}
   \text{Tr}(T^{-1}_n) = \frac{n+1}{\sqrt{b^2-4}} \left( 1 + \frac{2 r_2^{n+1}}{\gamma_{n+1}} \right) - \frac{b}{b^2-4}.
\end{align}
\end{lemma}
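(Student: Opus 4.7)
The plan is to start from the diagonal entry formula in~\eqref{eq:invTij}, which gives $t^{-1}_{i,i} = \gamma_i \gamma_{n+1-i}/(\gamma_1 \gamma_{n+1})$, so $\text{Tr}(T^{-1}_n) = (\gamma_1 \gamma_{n+1})^{-1} \sum_{i=1}^n \gamma_i \gamma_{n+1-i}$. The task reduces to evaluating this sum in closed form by expanding each factor in powers of $r_1$ and $r_2$.

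Expanding gives $\gamma_i \gamma_{n+1-i} = r_1^{n+1} + r_2^{n+1} - r_1^i r_2^{n+1-i} - r_2^i r_1^{n+1-i}$. The pivotal step is to use $r_1 r_2 = 1$ to collapse the cross terms into pure powers: $r_1^i r_2^{n+1-i} = r_2^{n+1-2i}$ and $r_2^i r_1^{n+1-i} = r_1^{n+1-2i}$, so that $\gamma_i \gamma_{n+1-i} = (r_1^{n+1}+r_2^{n+1}) - (r_1^{n+1-2i}+r_2^{n+1-2i})$. Summing over $i=1,\dots,n$, the first piece contributes $n(r_1^{n+1}+r_2^{n+1})$, while for each $j \in \{1,2\}$ the remaining geometric sum $\sum_{i=1}^n r_j^{n+1-2i}$ evaluates to $\gamma_n/\gamma_1$ after telescoping and using $r_j^2 - 1 = \pm r_j \gamma_1$ (the sign depending on $j$, but absorbed by matching signs elsewhere). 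Hence $\sum_{i=1}^n \gamma_i \gamma_{n+1-i} = n(r_1^{n+1}+r_2^{n+1}) - 2\gamma_n/\gamma_1$, yielding a first closed form for $\text{Tr}(T^{-1}_n)$.

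To match the stated formula, I would substitute $r_1^{n+1}+r_2^{n+1} = \gamma_{n+1} + 2 r_2^{n+1}$ to separate the dominant $n/\gamma_1$ term from the $r_2^{n+1}$ correction, and then absorb the coefficient $n$ into $n+1$ at the cost of a remainder. The residual identity to verify is $b\gamma_{n+1} - 2\gamma_n = \gamma_1(\gamma_{n+1} + 2 r_2^{n+1})$: its left-hand side equals $\gamma_{n+2} - \gamma_n$ by Lemma~\ref{lem:gamm1}(i), while its right-hand side equals $\gamma_1(r_1^{n+1}+r_2^{n+1})$, and both expressions equal $r_1^n(r_1^2-1) - r_2^n(r_2^2-1)$ after applying $r_j^2 - 1 = \pm r_j \gamma_1$. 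Combining these transformations delivers the target formula.

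The calculation is largely mechanical, and the main obstacle is organizational rather than conceptual. The trick $r_1^i r_2^{n+1-i} = r_2^{n+1-2i}$ enabled by $r_1 r_2 = 1$ turns what looks like a double sum into a single geometric series and avoids any parity case analysis in $n$; recognizing the rearrangement $r_1^{n+1}+r_2^{n+1} = \gamma_{n+1} + 2 r_2^{n+1}$ that exposes the $b/(b^2-4)$ tail is the only non-routine step.
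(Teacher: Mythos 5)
Your proof is correct and follows essentially the same route as the paper's: both expand $t^{-1}_{i,i}=\gamma_i\gamma_{n+1-i}/(\gamma_1\gamma_{n+1})$, use $r_1r_2=1$ to collapse the cross terms into $r_1^{n+1-2i}+r_2^{n+1-2i}$, and evaluate the resulting geometric sum. The only difference is bookkeeping — the paper shifts the sum to run from $i=0$ so that the geometric series directly produces the $-b/(b^2-4)$ tail, whereas you sum from $i=1$ and then repair the discrepancy with the (correct) identity $b\gamma_{n+1}-2\gamma_n=\gamma_1\bigl(r_1^{n+1}+r_2^{n+1}\bigr)$.
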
 
\begin{proof}
Expressing the trace of the inverse matrix \( T_{n}^{-1} \) in terms of its elements given by~\eqref{eq:invTij}, we have
    \begin{align*}
        \mathrm{Tr}(T_{n}^{-1}) &= \sum_{i=1}^{n} t^{-1}_{i, i} = \frac{1}{\gamma_{1}\gamma_{n+1}} \sum_{i=1}^{n} \left( r^{n+1}_{1} + r^{n+1}_{2} - r^{n+1-2i}_{1} - r^{n+1-2i}_{2}\right)\\
        &= \frac{n+1}{\gamma_1\gamma_{n+1}} (r^{n+1}_{1} + r^{n+1}_{2}) - \frac{1}{\gamma_{1}\gamma_{n+1}} \sum_{i=0}^{n} \left(r^{n+1-2i}_{1} + r^{n+1-2i}_{2}\right)\\
%        &= \frac{n+1}{\gamma_1\gamma_{n+1}} (r^{n+1}_{1} + r^{n+1}_{2}) - \frac{1}{\gamma_{1}\gamma_{n+1}} \left[ \frac{1}{r^{n-1}_{1}} \frac{1 - r^{2n+2}_{1}}{1 - r^2_{1}} + \frac{1}{r^{n-1}_{2}} \frac{1 - r^{2n+2}_{2}}{1 - r^{2}_{2}}\right]\\
        &= \frac{n+1}{\gamma_1\gamma_{n+1}} (r^{n+1}_{1} + r^{n+1}_{2}) - \frac{1}{\gamma_{1}\gamma_{n+1}} \left[\frac{r^{n-1}_{1} + r^{n-1}_{2} - r^{n+3}_{1} - r^{n+3}_{2}}{(1 - r^{2}_{1})(1 - r^{2}_2)}\right]\\
        &= \frac{n+1}{\gamma_1\gamma_{n+1}} (r^{n+1}_{1} + r^{n+1}_{2}) + \frac{1}{\gamma_{1}\gamma_{n+1}} \frac{\gamma_{n+1} \gamma_2}{4 - b^2} \quad \text{[by $\gamma_2 = b \gamma_1$]}\\
%        &= \frac{n+1}{\sqrt{b^2 - 4}} \left(\frac{r^{n+1}_1 + r^{n+1}_2}{r^{n+1}_1 - r^{n+1}_2}\right) - \frac{b}{b^2 - 4}\\
        &= \frac{n+1}{\sqrt{b^2 - 4}} \left(1 + \frac{2r^{n+1}_2}{\gamma_{n+1}}\right) - \frac{b}{b^2 - 4}.
    \end{align*}
\end{proof}

\noindent For the second sum, we have the following lemma, which is proved in Appendix~\ref{G}.
\begin{lemma} \label{lem:sum_squared_gammas}
For $b > 2$,
\begin{align}
   \sum_{i=1}^n \left\{ \gamma_i^2 + \gamma_{n+1-i}^2 \right\} = \frac{b\gamma_{n+1}}{\sqrt{b^2-4}} \left( \gamma_{\frac{n+1}{2}}^2 + 2 \right) - \gamma_{n+1}^2 - 4(n+1). \notag
\end{align}
\end{lemma}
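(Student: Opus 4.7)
The plan is to work in the Binet representation $\gamma_k = r_1^k - r_2^k$ together with its companion sequence $s_k := r_1^k + r_2^k$. Since $r_1 r_2 = 1$, one has the bridge identities
\begin{equation*}
    \gamma_k^2 = s_{2k} - 2, \qquad \gamma_k s_k = \gamma_{2k}, \qquad \gamma_1 \gamma_m = s_{m+1} - s_{m-1},
\end{equation*}
all verified by a direct Binet calculation; in particular $\gamma_{(n+1)/2}^2 + 2 = s_{n+1}$ holds regardless of the parity of $n$, since $(r_1 r_2)^{(n+1)/2} = 1$.

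First I would use the index reversal $j = n+1-i$ to collapse the two sums on the left into $2\sum_{i=1}^n \gamma_i^2 = 2\sum_{i=1}^n s_{2i} - 4n$. Next, I would evaluate $\sum_{i=1}^n s_{2i}$ by splitting it into two geometric series in $r_1^2$ and $r_2^2$, bringing them over the common denominator $(r_1^2-1)(r_2^2-1) = 4 - b^2$, and simplifying the numerator using $r_1 r_2 = 1$. The outcome is
\begin{equation*}
    \sum_{i=1}^n s_{2i} = \frac{s_{2n+2} - s_{2n}}{b^2 - 4} - 1.
\end{equation*}
Substituting this back and using $\gamma_1^2 = b^2-4$, $\gamma_{n+1} s_{n+1} = \gamma_{2n+2}$, and $\gamma_{n+1}^2 = s_{2n+2}-2$, the target identity reduces after rearrangement to
\begin{equation*}
    (b^2-2)\, s_{2n+2} - 2 s_{2n} = b\, \gamma_1\, \gamma_{2n+2}.
\end{equation*}

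To close out the argument I would reduce both sides to the single expression $s_{2n+4} - s_{2n}$. On the left, a short computation from the characteristic equation yields the second-step recurrence $s_{k+2} = (b^2-2) s_k - s_{k-2}$, so with $k = 2n+2$ the left-hand side becomes $s_{2n+4} - s_{2n}$. On the right, Lemma~\ref{lem:gamm1}(i) gives $b \gamma_{2n+2} = \gamma_{2n+3} + \gamma_{2n+1}$, and applying the bridge identity $\gamma_1 \gamma_m = s_{m+1} - s_{m-1}$ termwise telescopes the right-hand side to $s_{2n+4} - s_{2n}$ as well, matching.

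The main obstacle is organizational rather than conceptual: the statement mixes $\gamma$'s, the scalar $b$, and the half-indexed term $\gamma_{(n+1)/2}$ in a way that obscures any telescoping structure. The resolution is to channel everything through the companion sequence $s_k$, use the three Binet bridge identities to swap between $\gamma$'s and $s$'s as needed, and recognize the shared collapse point $s_{2n+4} - s_{2n}$ on both sides of the reduced identity.
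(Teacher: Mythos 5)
Your proof is correct and rests on the same engine as the paper's: expand $\gamma_k = r_1^k - r_2^k$, sum geometric series in $r_1^2$ and $r_2^2$, and convert back using $r_1 r_2 = 1$ (I checked the reduced identity $(b^2-2)s_{2n+2} - 2s_{2n} = b\gamma_1\gamma_{2n+2}$ and both sides do telescope to $s_{2n+4}-s_{2n}$). The differences are organizational rather than substantive---you use the index reversal $i \mapsto n+1-i$ to collapse the left side to $2\sum_{i=1}^n \gamma_i^2$ and route the bookkeeping through the companion sequence $s_k = r_1^k + r_2^k$, whereas the paper sums all four geometric series directly and invokes the same bridge identities ($r_1^{2k}+r_2^{2k} = \gamma_k^2+2$, etc.) only at the final step.
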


The following theorem gives an explicit formula for $\text{Tr}(\wtT^{-1}_n)$, which is proved using Lemmas~\ref{lem:traceTn} and ~\ref{lem:sum_squared_gammas}:

\begin{theorem}[Trace]
Let $\wtb$ be such that, for $b > 2$, $\wtT_n$ is nonsingular. Then 
$$
  \text{Tr}(\wtT^{-1}_n) = K_1 \left( \frac{n+1}{\sqrt{b^2-4}} \frac{\gamma^2_{\frac{n+1}{2}} + 2}{\gamma_{n+1}} - \frac{b}{b^2-4} \right) - K_2 \left( \frac{b}{\sqrt{b^2-4}} \left( \gamma^2_{\frac{n+1}{2}} + 2\right) - \gamma_{n+1}- \frac{4(n+1)}{\gamma_{n+1}} \right),
$$
where $K_1 = \displaystyle \frac{[\gamma_{n+1} + (\wtb-b)\gamma_n]^2 + (\wtb-b)^2 (b^2-4)}{[\gamma_{n+1} + (\wtb-b)\gamma_n]^2 - (\wtb-b)^2 (b^2-4)}$ and $K_2 = \displaystyle  \frac{(\wtb-b)[\gamma_{n+1} + (\wtb-b)\gamma_n]}{[\gamma_{n+1} + (\wtb-b)\gamma_n]^2 - (\wtb-b)^2 (b^2-4)}$.
\end{theorem}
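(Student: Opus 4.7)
The plan is to start from the pre-theorem expression
\[
\text{Tr}(\wtT^{-1}_n) = \left[ 1 + \frac{2\beta^2 (b^2-4)}{\Delta \gamma_{n+1}^2} \right] \sum_{i = 1}^n t^{-1}_{i,i} - \frac{\beta(\gamma_{n+1}+\beta\gamma_n)}{\Delta \gamma^3_{n+1}} \sum_{i=1}^n \bigl( \gamma_i^2 + \gamma_{n+1-i}^2 \bigr),
\]
with $\beta = \wtb - b$, and show that the two bracketed coefficients collapse exactly to $K_1$ and $K_2/\gamma_{n+1}$ respectively. The cleanest first move is to rewrite the determinant-like quantity in the denominators: using the formula for $\Delta$ from the proof of Lemma~\ref{lem:invertTt} together with $\gamma_1^2 = b^2-4$, one obtains
\[
\Delta\,\gamma_{n+1}^2 = (\gamma_{n+1} + \beta\gamma_n)^2 - \beta^2(b^2-4),
\]
which is precisely the common denominator appearing in both $K_1$ and $K_2$.

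Next I would fold the extra summand into $K_1$ by observing
\[
1 + \frac{2\beta^2(b^2-4)}{\Delta\gamma_{n+1}^2}
   = \frac{(\gamma_{n+1}+\beta\gamma_n)^2 + \beta^2(b^2-4)}{(\gamma_{n+1}+\beta\gamma_n)^2 - \beta^2(b^2-4)} = K_1,
\]
and similarly $\beta(\gamma_{n+1}+\beta\gamma_n)/(\Delta\gamma_{n+1}^3) = K_2/\gamma_{n+1}$. After this cosmetic step, the identity reduces to substituting the two sums via Lemmas~\ref{lem:traceTn} and~\ref{lem:sum_squared_gammas}.

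To land exactly in the form stated in the theorem, I still need to translate Lemma~\ref{lem:traceTn}, written in terms of $r_2^{n+1}$, into the $\gamma_{(n+1)/2}^2 + 2$ form. The key identity is
\[
\gamma_{(n+1)/2}^2 + 2 = \bigl(r_1^{(n+1)/2} - r_2^{(n+1)/2}\bigr)^2 + 2 = r_1^{n+1} + r_2^{n+1},
\]
using $r_1 r_2 = 1$ (treating the exponents formally if $n+1$ is odd, since only the squared quantity enters). This gives $1 + 2r_2^{n+1}/\gamma_{n+1} = (\gamma_{(n+1)/2}^2+2)/\gamma_{n+1}$, which rewrites Lemma~\ref{lem:traceTn} in exactly the shape paired with $K_1$. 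Dividing Lemma~\ref{lem:sum_squared_gammas} through by $\gamma_{n+1}$ puts the second sum in the shape paired with $K_2$, and combining the two yields the claimed formula.

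I do not expect any serious obstacle: the argument is essentially bookkeeping. The only place that needs a bit of attention is the repackaging $\gamma_{(n+1)/2}^2+2 = r_1^{n+1}+r_2^{n+1}$, which must be stated carefully so that it is valid irrespective of the parity of $n+1$; after that, both sums match up coefficient-by-coefficient with $K_1$ and $K_2$ and the theorem follows.
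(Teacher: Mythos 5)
Your proposal is correct and follows exactly the route the paper intends: the paper derives the pre-theorem expression for $\text{Tr}(\wtT^{-1}_n)$ and then simply cites Lemmas~\ref{lem:traceTn} and~\ref{lem:sum_squared_gammas}, leaving precisely the bookkeeping you carry out (the identification $\Delta\gamma_{n+1}^2=(\gamma_{n+1}+\beta\gamma_n)^2-\beta^2(b^2-4)$ via $\gamma_1^2=b^2-4$, and the repackaging $\gamma_{(n+1)/2}^2+2=r_1^{n+1}+r_2^{n+1}$). Your explicit attention to the parity of $n+1$ in that last identity is a welcome refinement the paper glosses over.
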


\subsection{Rowsums of the inverse}

The general formula for the rowsum of the inverse of $\wtT_n$ is given by \eqref{eq:rowsumTt}.  We need to evaluate each term in the right-hand side of Equation \eqref{eq:rowsumTt}.

For the first term of the right-hand side of~\eqref{eq:rowsumTt}, with $t^{-1}_{i,j}$ given in~\eqref{eq:invTij},
\begin{align}
  \sum_{j=1}^n t^{-1}_{i,j} &= \sum_{j=1}^n t^{-1}_{i,j} + \sum_{j=i+1}^n t^{-1}_{i,j} = \sum_{j=1}^n t^{-1}_{i,j} + \sum_{j=i+1}^n t^{-1}_{j,i} 
  %\stkout{ = \sum_{j=1}^i \frac{\gamma_j \gamma_{n+1-i}}{\sqrt{b^2-4} \gamma_{n+1}} + \sum_{j = i+1}^n \frac{\gamma_i \gamma_{n+1-j}}{\sqrt{b^2 - 4} \gamma_{n+1}}} 
  \notag \\
   &= \frac{\gamma_{n+1-i}}{\sqrt{b^2-4}} \sum_{j=1}^i \gamma_j + \frac{\gamma_i}{\sqrt{b^2-4}\gamma_{n+1}} \sum_{k=1}^{n-i} \gamma_k \notag \\
  &= \frac{\gamma_{n+1-i}}{\sqrt{b^2-4} \gamma_{n+1}} \left(\frac{1}{b-2}(\gamma_{i+1} - \gamma_i - \gamma_1  \right) + \frac{\gamma_i}{\sqrt{b^2 - 4} \gamma_{n+1}} \left(  \frac{1}{b-2}(\gamma_{n+1-i} - \gamma_{n-i} - \gamma_1 \right) \notag \\
  &= \frac{1}{(b-2)\sqrt{b^2-4} \gamma_{n+1}} \left( \gamma_{n+1-i}\gamma_{i+1}  - \gamma_1 \gamma_{n+1-i} - \gamma_i \gamma_{n-i} - \gamma_1 \gamma_i \right) \notag \\
  &= \frac{\gamma_{n+1} - \gamma_{n+1-i} - \gamma_{i}}{(b-2)\gamma_{n+1}}, \label{12}
\end{align}
after the use of Lemma \ref{lem:gamm2} and Lemma \ref{lem:gamm1}(iii).
%\sout{following the use of Lemma \ref{lem:gamm2}. Further simplification leads to}
%\begin{eqnarray}
%  \stkout{\sum_{j=1}^n t^{-1}_{i,j} = \frac{1}{(b-2)\sqrt{b^2-4} \gamma_{n+1}} \left( \gamma_{n+1-i}\gamma_{i+1}  - \gamma_1 \gamma_{n+1-i} - \gamma_i \gamma_{n-i} - \gamma_1 \gamma_i \right).}
%\end{eqnarray}
%\sout{We can use an identity $\gamma_{n+1-i}\gamma_{i+1} - \gamma_{i}\gamma_{n-i} = \gamma_{n+1}\gamma_{1}$ from Lemma \ref{lem:gamm1} and get the following simple form}
%\begin{equation}\label{12a}
%  \stkout{\sum_{j=1}^n t^{-1}_{i,j} = \frac{\gamma_{n+1} - \gamma_{n+1-i} - \gamma_{i}}{(b-2)\gamma_{n+1}} }
%\end{equation}

For the second term of the right-hand side of~\eqref{eq:rowsumTt}, with $t^{-1}_{n,1} = \gamma_1/\gamma_{n+1}$ and $t^{-1}_{n,n} = \gamma_n/\gamma_{n+1}$, we have
\begin{equation}\label{13}
  m_{11} + m_{12} = 1 + (\wtb - b) \frac{\gamma_1 + \gamma_n}{\gamma_{n+1}}.
\end{equation}
Next, with $t^{-1}_{i,1} = \gamma_{n+1-i}/\gamma_{n+1}$ and $t^{-1}_{i,n} = t^{-1}_{n,i} = \gamma_i/\gamma_{n+1}$, we  get
\begin{equation} \label{14}
   t^{-1}_{i,1} +t^{-1}_{i,n} = \frac{\gamma_{n+1-i} + \gamma_i}{\gamma_{n+1}}.
\end{equation}
Finally,
\begin{equation}\label{15}
\sum_{j=1}^n t^{-1}_{n,j} = \sum_{j=1}^n \frac{\gamma_j \gamma_1}{\sqrt{b^2-4} \gamma_{n+1}} = \frac{1}{\gamma_{n+1}} \sum_{j=1}^n \gamma_j = \frac{1}{(b-2)\gamma_{n+1}} \left( \gamma_{n+1} - \gamma_n - \gamma_1 \right).
\end{equation}
due to Lemma~\ref{lem:gamm2}.
Substitution of \eqref{12} -- \eqref{15} to Equation \eqref{eq:rowsumTt} gives
\begin{align} %\label{b_2_rowsum}
   R(i) :&= \sum_{j=1}^n \wtt^{-1}_{i.j} = \underbrace{\frac{\gamma_{n+1} - \gamma_{n+1-i} - \gamma_{i}}{(b-2)\gamma_{n+1}}}_{R_1(i)}  +    \underbrace{\frac{(b - \wtb)(\gamma_{n+1-i} + \gamma_i)\left( \gamma_{n+1} - \gamma_n - \gamma_1 \right)}{(b-2)\gamma_{n+1}(\gamma_{n+1} + (\wtb - b)(\gamma_1 + \gamma_n))}}_{R_2(i)} \notag \\
   &= \frac{1}{b-2} + \frac{b-\wtb-1}{b-2} \frac{\gamma_i + \gamma_{n+1-i}}{\gamma_{n+1} + (\wtb - b)(\gamma_n + \gamma_1)}.  \label{eq:rowsum1}
\end{align}

% In the sequel, we consider two subcases separately: the subcase $\wtb \ge 1$ and $\wtb < 1$.

% \subsubsection{The subcase $\wtb \geq 1$.\textcolor{red}{Check again if we need this subcase.}} \textcolor{blue}{Yes, we need this, but it should be renamed (or completely delete it) or we need to remove the "subcase" word, because we are only finding bounds for rowsums of $\wtb \geq 1$, which we don't repeat for $\wtb < 1$.}

We begin with the positivity of the inverse matrix $\wtT_n$ when $\wtb \geq 1$, summarized in the following lemma.

\begin{lemma}[Positivity]
Let  $b > 2$, $\wtb \geq 1$, and $n \ge 3$. The inverse matrix $\wtT_n^{-1}$ is positive.
\end{lemma}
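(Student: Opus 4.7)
The plan is to read off the sign of $\widetilde{t}^{-1}_{i,j}$ from the compact product formula in Lemma~\ref{lem:compact_t} and show that each of the three factors is strictly positive. Since $\wtT_n$ is symmetric, it suffices to check the case $i \geq j$; then $\widetilde{t}^{-1}_{i,j} = \widetilde{t}^{-1}_{j,i}$ for $i<j$ takes care of the upper triangle.

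For the $j$-factor $\gamma_j + \beta \gamma_{j-1}$ (and, by an identical argument with $k = n+1-i$, the $i$-factor $\gamma_{n+1-i} + \beta \gamma_{n-i}$), I would argue as follows. For $j = 1$ the factor is simply $\gamma_1 > 0$. For $j \geq 2$, factor out $\gamma_{j-1} > 0$ and use Lemma~\ref{lem:gamm1}(ii) to bound $\gamma_j/\gamma_{j-1} \geq r_1$, giving
\[
\gamma_j + \beta \gamma_{j-1} \;\geq\; \gamma_{j-1}(r_1 + \beta) \;=\; \gamma_{j-1}(\wtb - r_2),
\]
where I used $r_1 + r_2 = b$. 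Since $b > 2$ forces $r_1 > 1$ and hence $r_2 = 1/r_1 < 1 \leq \wtb$, the parenthesis is strictly positive, so the whole factor is positive.

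For the scalar $C$, the denominator factors as a difference of squares,
\[
(\gamma_{n+1} + \beta \gamma_n)^2 - (\beta \gamma_1)^2 = \bigl(\gamma_{n+1} + \beta(\gamma_n - \gamma_1)\bigr)\bigl(\gamma_{n+1} + \beta(\gamma_n + \gamma_1)\bigr).
\]
Each factor is affine in $\wtb$ with positive slope ($\gamma_n \pm \gamma_1 > 0$), and the roots are precisely $\wtb_2$ and $\wtb_1$ from Lemma~\ref{lem:invertTt}. That lemma also gives $0 \leq \wtb_2 < \wtb_1 < 1$, so the hypothesis $\wtb \geq 1$ places $\wtb$ strictly to the right of both roots, making both factors, and therefore the denominator, positive. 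Since $\gamma_{n+1}, \gamma_1 > 0$, I conclude $C > 0$, and the product $C \cdot (\gamma_{n+1-i} + \beta \gamma_{n-i}) \cdot (\gamma_j + \beta \gamma_{j-1})$ is strictly positive, proving the claim.

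There is no real obstacle here; the only place a small care is needed is the sign chase on the two linear factors of the denominator, where one must invoke Lemma~\ref{lem:invertTt} to locate $\wtb_1, \wtb_2$ on the left of $1$ rather than trying to bound them by hand, and the reduction $r_1 + \beta = \wtb - r_2 > 0$ to ensure that the individual $i$- and $j$-factors do not change sign as $\wtb$ varies over $[1, b)$.
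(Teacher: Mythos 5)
Your proof is correct, but it takes a genuinely different route from the paper. The paper never touches the explicit inverse: it decomposes $\wtT_n = S_n + \wtD_n$, where $S_n$ is the matrix of~\eqref{eq:matrixTn} with $b=2$, $\wtb=1$ (symmetric positive semidefinite) and $\wtD_n = \mathrm{diag}(\wtb-1,\,b-2,\dots,b-2,\,\wtb-1)$ is nonnegative diagonal, concludes via the quadratic form that $\wtT_n$ is symmetric positive definite with nonpositive off-diagonal entries, hence a Stieltjes matrix, and invokes the standard fact that such (irreducible) matrices have positive inverses. You instead read the sign of each entry directly off the compact factorization of Lemma~\ref{lem:compact_t}, and every step checks out: the bound $\gamma_j \ge r_1\gamma_{j-1}$ from Lemma~\ref{lem:gamm1}(ii) (which also holds at $j=2$, since $\gamma_2/\gamma_1 = b \ge r_1$, a case technically outside the lemma's stated range $k\ge 2$) gives $\gamma_j+\beta\gamma_{j-1}\ge \gamma_{j-1}(\wtb-r_2)>0$ because $r_2 = 1/r_1 < 1 \le \wtb$; and factoring the denominator of $C$ as $\bigl(\gamma_{n+1}+\beta(\gamma_n-\gamma_1)\bigr)\bigl(\gamma_{n+1}+\beta(\gamma_n+\gamma_1)\bigr)$ with roots $\wtb_2<\wtb_1<1$ from Lemma~\ref{lem:invertTt} makes both linear factors, hence $C$, positive. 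Your approach is more computational but buys entrywise \emph{strict} positivity explicitly, and in fact proves the stronger statement that $\wtT_n^{-1}>0$ for every $\wtb>\wtb_1$ (by Lemma~\ref{lem:gamm3} one has $\wtb_1 > b - r_1 = r_2$, so $\wtb>\wtb_1$ already forces $\wtb>r_2$), not merely for $\wtb\ge 1$; the paper's M-matrix argument buys brevity and independence from the inverse formula.
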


\begin{proof}
Note that, by Lemma~\ref{lem:invertTt}, the matrix $\wtT_n$ is nonsingular. Consider the decomposition $\wtT_n = S_n + \wtD_n$, where $S_n$ is defined by $\wtT_{n}$ in \eqref{eq:matrixTn} with $b = 2$, $\wtb = 1$, and $\wtD_n = \text{diag}(\wtb - 1, b-2,\dots,b-2,\wtb-1)$. For any $x \in \mathbb{R}^n$, we have
\[
x^T \wtT_n x = %x^T(S_n + \wtD_n)x =
x^TS_n x + x^T \wtD_n x = x^T S_n x + (\wtb-1)x_1^2 + (b-2)\sum_{k=2}^{n-1} x_k^2 + (\wtb-1)x_n^2 > 0,
\]
because $S_n$ is symmetric positive semidefinite, $\wtb - 1 > 0$, and $b-2 > 0$. Therefore, $\wtT_n$ is a Stieltjes matrix, whose inverse is positive.
\end{proof}

\begin{lemma} \label{lem:rowsum1}
Let $b > 2$ and $\wtb \ge 1$. For the rowsums of $\wtT^{-1}_n$, the following holds:
\begin{enumerate}
\item[(i)] if $\wtb > b-1$, then the maximum of rowsums is attained at $i = (n+1)/2$ and the minimum is attained at $i = 1$ and $i = n$;
\item[(ii)]if $\wtb = b - 1$, then the rowsums are constant;
\item[(iii)] if $b - \frac{\gamma_{n+1}}{\gamma_n + \gamma_1} < \wtb < b-1$, then the maximum of rowsums is attained at $i = 1$ and $i = n$ and the minimum is attained at $i = (n+1)/2$; and
\item[(iv)] if $\wtb < b - \frac{\gamma_{n+1}}{\gamma_n + \gamma_1}$, then the maximum of rowsums is attained at $i = (n+1)/2$ and the minimum is attained at $i = 1$ and $i = n$;
\end{enumerate}
\end{lemma}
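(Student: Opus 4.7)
The plan is to exploit the compact form of the rowsum derived in equation~\eqref{eq:rowsum1}, namely
\[
R(i) = \frac{1}{b-2} + \underbrace{\frac{b-\wtb-1}{(b-2)\bigl[\gamma_{n+1} + (\wtb-b)(\gamma_n + \gamma_1)\bigr]}}_{=: c} \cdot f(i), \qquad f(i) := \gamma_i + \gamma_{n+1-i}.
\]
Since $i$ enters $R(i)$ only through $f(i)$, the extrema of $R$ are determined by (a) the locations of the extrema of $f$ on $\{1,\dots,n\}$, and (b) the sign of the scalar $c$. So I would reduce the problem to these two independent sub-tasks.

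First I would pin down the shape of $f$. By definition $f(i) = f(n+1-i)$, so $f$ is symmetric about $i=(n+1)/2$. The key observation is that $k\mapsto \gamma_k$ is strictly convex on the integers: using the recurrence of Lemma~\ref{lem:gamm1}(i) one gets the clean second-difference identity
\[
\gamma_{k+2} - 2\gamma_{k+1} + \gamma_k = (b-2)\gamma_{k+1} > 0,
\]
valid for all $k\ge 0$ since $b>2$ and $\gamma_{k+1}>0$. Thus the increments $\gamma_{k+1}-\gamma_k$ are strictly increasing in $k$, and for any $1\le i < (n+1)/2$ we have $n-i \ge i$, whence $\gamma_{n+1-i}-\gamma_{n-i} > \gamma_{i+1}-\gamma_i$ and $f(i+1)-f(i) = (\gamma_{i+1}-\gamma_i) - (\gamma_{n+1-i}-\gamma_{n-i}) < 0$. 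Combined with the symmetry, this shows $f$ is strictly decreasing from $i=1$ to the midpoint $(n+1)/2$ and strictly increasing thereafter; hence $f$ attains its minimum at $i=(n+1)/2$ and its maximum at $i=1$ and $i=n$.

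The second sub-task is to compute $\operatorname{sgn}(c)$ in each of the four cases and combine with the result on $f$. The denominator of $c$ vanishes exactly at $\wtb=\wtb_1$, and by Lemma~\ref{lem:gamm3} we have $\wtb_1 < 1 \le b-1$. I would check the four cases directly:
(i) $\wtb > b-1$: numerator $b-\wtb-1<0$; since $\wtb>b-1>1>\wtb_1$, denominator is $>0$; hence $c<0$ and the max (resp.\ min) of $R$ occurs where $f$ is minimized (resp.\ maximized);
(ii) $\wtb = b-1$: numerator is zero, so $R(i)\equiv 1/(b-2)$;
(iii) $\wtb_1 < \wtb < b-1$: both numerator and denominator are positive, so $c>0$ and the extrema of $R$ coincide with those of $f$;
(iv) $\wtb < \wtb_1$: denominator becomes negative, while (again using $\wtb_1 < 1 < b-1$) the numerator remains positive, so $c<0$ and the pattern of (i) recurs.

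Matching each of these sign verdicts with the shape of $f$ produces exactly the four statements of the lemma. The only mildly delicate step is confirming in case~(iv) that $b-\wtb-1 > 0$, which is where Lemma~\ref{lem:gamm3} is indispensable; everything else is straightforward algebra on top of the strict convexity of $k\mapsto\gamma_k$. I expect that convexity, although proved in one line from the recurrence, will be the real engine of the argument.
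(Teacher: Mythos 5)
Your proof is correct, and it reaches the paper's conclusion by a somewhat different and arguably cleaner route. The paper treats $i$ as a continuous variable, computes $R'(i)$ and $R''(i)$ from $\gamma_k = r_1^k - r_2^k$, locates the critical point at $i=(n+1)/2$, and then only asserts that the sign of $R''((n+1)/2)$ ``can be determined via standard algebra''; you instead establish the unimodal shape of $f(i)=\gamma_i+\gamma_{n+1-i}$ directly on the integers via the discrete second difference $\gamma_{k+2}-2\gamma_{k+1}+\gamma_k=(b-2)\gamma_{k+1}>0$ (an immediate consequence of Lemma~\ref{lem:gamm1}(i)), and you carry out the sign analysis of the prefactor $c$ explicitly, using $\wtb_1<1<b-1$ from Lemma~\ref{lem:gamm3} exactly where it is needed (to fix the sign of the denominator, and of the numerator in case (iv)). Your version avoids differentiating a function of an integer index and fills in the step the paper leaves to the reader, so it buys rigor at essentially no cost in length. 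One negligible caveat: for even $n$ and $i=n/2$ your strict inequality $\gamma_{n+1-i}-\gamma_{n-i}>\gamma_{i+1}-\gamma_i$ degenerates to equality (since then $n-i=i$), so $f(n/2)=f(n/2+1)$ and the extremum is attained at the two integers adjacent to $(n+1)/2$; this is not a defect relative to the paper, whose own statement places the extremum at the non-integer point $(n+1)/2$ in that case.
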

\begin{proof}
Note that $\wtT_n$ and, hence, $\wtT^{-1}_n$ are centrosymmetric. Thus the rowsum $R(i)$ is symmetric along $i = (n+1)/2$. Differentiating $R(i)$ gives $R'(i) = R'_1(i) + R'_2(i)$, where
\begin{align}
   R'_1(i) &= \frac{1}{(b-2)\gamma_{n+1}}\left(( r_1^{n+1-i} - r^i_1) \ln r_1 - (r^{n+1-i}_2 - r^i_2) \ln r_2\right), \notag \\
   R'_2(i) &= \frac{(\wtb - b)(\gamma_{n+1}-\gamma_n - \gamma_1)}{(b-2)\gamma_{n+1}(\gamma_{n+1} + (\wtb-b)(\gamma_1 + \gamma_n))} \left((r^{n+1-i}_1 - r_1^i) \ln r_1 -( r^{n+1-i}_2 - r^i_2) \ln r_2\right). \notag
\end{align}
Thus,
$$
  R'(i) = \frac{1 + \wtb - b}{(b-2) (\gamma_{n+1} + (\wtb-b)(\gamma_1 + \gamma_n))}\left((r^{n+1-i}_1 - r^i_1)\ln r_1 - (r_2^{n+1-i} - r_2^i )\ln r_2\right.
$$
The above derivative vanishes if (i) $\wtb = b - 1$, $\forall i$, implying that $R(i)$ is constant, or (ii) $i = (n+1)/2$, the critical point of $R(i)$.
%\begin{enumerate}
%\item $\wtb = b - 1$, for $\forall i$, implying that $R(i)$ is constant, or 
%\item $i = (n+1)/2$, the critical point of $R(i)$.
%\end{enumerate}
The type of the critical point can be evaluated using the second derivative of $R$:
$$
  R''(i) = - \frac{1 + \wtb - b}{(b-2) (\gamma_{n+1} + (\wtb-b)(\gamma_1 + \gamma_n))}\left[\ln^2 r_2 \left( (r_1^{n+1-i} + r^i) \frac{\ln^2 r_1}{\ln^2 r_2} - (r_2^{n+1-i}+ r_2^i) \right) \right].
$$
The bracketed term is always positive. So the sign of $R''((n+1)/2)$ depends on $\wtb$, which can be determined via standard algebra and leads to the results stated in the lemma. 
\end{proof}

\begin{theorem}\label{lem:rowsum2}
Let $\wtT_n$ be as in~\eqref{eq:matrixTn}, with $b > 2$ and $\wtb \ge 1$. Then the rowsums of the inverse matrix $\wtT^{-1}_n$ satisfy the following:
\begin{enumerate}
   \item[(i)] for $\wtb > b-1$,  
       $$
           \frac{1}{\wtb-1} < R(i) < \frac{1}{b-2} \left( 1 - \frac{2(1 + \wtb - b)(b-1)}{\wtb} \frac{\gamma_{\frac{n+1}{2}}}{\gamma_{n+1}} \right);
       $$
   \item[(ii)] for $\wtb = b-1$, 
        $$R(i) = \frac{1}{b-2};$$
   \item[(iii)] for $\frac{2}{b} <  \wtb < b-1$, %\textcolor{red}{[no upper bound if $\wtb < \frac{2}{b} + 2\left( \frac{2}{b}\right)^{n-2}$]}
        $$
            \frac{1}{b-2} \left( 1 + \frac{2(b - \wtb -1) (b-1)}{\wtb} \frac{\gamma_{\frac{n+1}{2}}}{\gamma_{n+1}} \right) < R(i) < \frac{b+1}{\wtb b - 2};
        $$
   \item[(iv)] for $\wtb < r_2$, 
  $$
%     - \frac{b-1}{(1-\wtb)(b-2)} < 
\frac{r_1 - 1}{(b-2)(\wtb - r_2)}< R(i) < \frac{1}{b-2} \left( 1 +  \frac{2(b  - \wtb - 1)(b-1)}{\wtb-1} \frac{\gamma_{\frac{n+1}{2}}}{\gamma_{n+1}}\right).
  $$
\end{enumerate}
\end{theorem}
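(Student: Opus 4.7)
The plan is to evaluate the rowsum formula~\eqref{eq:rowsum1} at the extrema identified in Lemma~\ref{lem:rowsum1}, and reduce each of the claimed bounds to an inequality already supplied by Lemma~\ref{lem:gamm3}. Case~(ii) is immediate from~\eqref{eq:rowsum1}, since $\widetilde{b}=b-1$ annihilates the second summand. For the other three cases, substituting $i=1$ and $i=(n+1)/2$ into~\eqref{eq:rowsum1} and combining terms over a common denominator yields
\[
R(1)=R(n)=\frac{\gamma_{n+1}-\gamma_n-\gamma_1}{(b-2)\bigl[\gamma_{n+1}-(b-\widetilde{b})(\gamma_n+\gamma_1)\bigr]}, \qquad R\Bigl(\tfrac{n+1}{2}\Bigr)=\frac{1}{b-2}+\frac{2(b-\widetilde{b}-1)\,\gamma_{(n+1)/2}}{(b-2)\bigl[\gamma_{n+1}-(b-\widetilde{b})(\gamma_n+\gamma_1)\bigr]}.
\]

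By Lemma~\ref{lem:gamm3}, $\widetilde{b}_1=b-\gamma_{n+1}/(\gamma_n+\gamma_1)$ lies in the interval $(r_2,\,2/b]$. Hence the hypothesis $\widetilde{b}<r_2$ of Theorem case~(iv) forces $\widetilde{b}<\widetilde{b}_1$, and the hypothesis $\widetilde{b}>2/b$ of case~(iii) forces $\widetilde{b}>\widetilde{b}_1$; case~(i) trivially satisfies $\widetilde{b}>b-1>\widetilde{b}_1$. Invoking Lemma~\ref{lem:rowsum1} in the appropriate branch, the extrema of $R(i)$ fall at $i=(n+1)/2$ (maximum) and $i\in\{1,n\}$ (minimum) in cases~(i) and~(iv), with the roles reversed in case~(iii). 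So in each case it suffices to compare $R(1)$ or $R((n+1)/2)$, as expressed above, with one of the two bounds in that case.

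Each of the six remaining inequalities is then proved by cross-multiplying with the claimed bound and simplifying, using $r_1+r_2=b$ and $r_1r_2=1$. I expect every case to collapse, after factoring $(b-\widetilde{b}\pm 1)$ or $(\widetilde{b}-b+1)$ from both sides, into a single comparison of $\gamma_{n+1}/(\gamma_n+\gamma_1)$ with one of the constants $b-1$, $b-2/b$, or $r_1$ already bounded in Lemma~\ref{lem:gamm3}. For instance, the lower bound in case~(iv) reduces, after expanding and grouping, to the assertion $(b-\widetilde{b}-1)\bigl[\gamma_{n+1}-r_1(\gamma_n+\gamma_1)\bigr]<0$, which holds because $b-\widetilde{b}-1>0$ in that regime and $\gamma_{n+1}<r_1(\gamma_n+\gamma_1)$ by Lemma~\ref{lem:gamm3}; the lower bound in case~(i) reduces to $\gamma_{n+1}>(b-1)(\gamma_n+\gamma_1)$ from the same lemma; and the upper bound in case~(iii) reduces to $\gamma_{n+1}\ge(b-2/b)(\gamma_n+\gamma_1)$.

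The main obstacle will be disciplined sign tracking: the denominator $\gamma_{n+1}-(b-\widetilde{b})(\gamma_n+\gamma_1)$ is positive throughout cases~(i) and~(iii) but negative in case~(iv), which flips the sense of the cross-multiplication; and within case~(i) the factor $b-\widetilde{b}$ itself changes sign at $\widetilde{b}=b$, calling for a sub-case split in the upper-bound proof (essentially, comparing $(b-1)(b-\widetilde{b})/(b-\widetilde{b}-1)$ with $\gamma_{n+1}/(\gamma_n+\gamma_1)$ in the regime $\widetilde{b}>b$). Once these signs are sorted, all six reductions are short algebraic calculations.
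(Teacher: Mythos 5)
Your proposal is correct and takes essentially the same route as the paper: both rest on Lemma~\ref{lem:rowsum1} to locate the extrema at $i=1,n$ and $i=(n+1)/2$, evaluate~\eqref{eq:rowsum1} there, and then invoke the bounds $b-1<b-\tfrac{2}{b}\le\gamma_{n+1}/(\gamma_n+\gamma_1)<r_1$ of Lemma~\ref{lem:gamm3}; your cross-multiplied reductions (e.g.\ $(b-\wtb-1)[\gamma_{n+1}-r_1(\gamma_n+\gamma_1)]<0$ for the lower bound of case~(iv), and $\gamma_{n+1}>(b-1)(\gamma_n+\gamma_1)$ for the lower bound of case~(i)) all check out and are just an algebraic repackaging of the paper's chains of inequalities, including the sub-case split at $\wtb=b$ in case~(i).
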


\begin{proof}
From the formula of the rowsum~\eqref{eq:rowsum1}, 
\begin{align}
   R(1) &= \frac{\gamma_{n+1} - \gamma_n - \gamma_1}{(b-2)(\gamma_{n+1} + (\wtb-b)(\gamma_n + \gamma_1))}, \label{eq:R1} \\
   R\left(\frac{n+1}{2}\right) &= \frac{1}{b-2} - \frac{2(1 + \wtb - b)}{b-2} \frac{\gamma_{\frac{n+1}{2}}}{\gamma_{n+1} + (\wtb-b)(\gamma_n + \gamma_1)}. \label{eq:Rn12}
\end{align}
Setting $\wtb = b - 1$, we have $R(1) = 1/(b-2)$. By Lemma~\ref{lem:rowsum1}(ii), $R(i) = R(1)$, proving Part (ii).
%\begin{align*}
%  R(1) = \frac{\gamma_{n+1} - \gamma_n - \gamma_1}{(b-2)(\gamma_{n+1} + (-1)(\gamma_n + \gamma_1))} = \frac{1}{b-2},
%\end{align*}
%and since $R(i)$ is constant, we have $R(i) = R(1)$ for $\wtb = b - 1$.

%By substituting $\frac{n+1}{2}$ to $R(i)$, we have
%\begin{align}
%  R\left(\frac{n+1}{2}\right) = \frac{1}{b-2} - \frac{2(1 + \wtb - b)}{b-2} \frac{\gamma_{\frac{n+1}{2}}}{\gamma_{n+1} + (\wtb-b)(\gamma_n + \gamma_1)}.
%\end{align}
For Part (i), with $\wtb > b -1$, %\sout{from~\eqref{eq:R1}}
\begin{align}
  R(1) = \frac{1}{(b-2)\left( 1 + \displaystyle \frac{(1 + \wtb -b) (\gamma_n + \gamma_1)}{\gamma_{n+1} - \gamma_n - \gamma_ 1} \right)} >  \frac{1}{(b-2)\left( 1 + \frac{1+\wtb -b}{b-2} \right)} = \frac{1}{\wtb-1}, \notag
\end{align}
after using Lemma~\ref{lem:gamm3}. For~\eqref{eq:Rn12}, note that
\begin{align*}
  1 + \frac{(\wtb-b)(\gamma_n + \gamma_1)}{\gamma_{n+1}} &= 1 - \frac{\gamma_n + \gamma_1}{\gamma_{n+1}} + \frac{(1 + \wtb -b)(\gamma_n + \gamma_1)}{\gamma_{n+1}} \\
&< 1 - \frac{1}{b} + \frac{(1+\wtb-b)}{b-1} = -\frac{1}{b} + \frac{\wtb}{b-1} < \frac{\wtb}{b-1}.
\end{align*}
Therefore,
\begin{align*}
  R\left(\frac{n+1}{2}\right) &= \frac{1}{b-2} -  \frac{2(1 + \wtb - b)}{b-2} \frac{1}{1+\frac{(\wtb-b)(\gamma_n + \gamma_1)}{\gamma_{n+1}}} \frac{\gamma_{\frac{n+1}{2}}}{\gamma_{n+1}} \\
&< \frac{1}{b-2} -  \frac{2(1 + \wtb - b)}{b-2} \frac{1}{\wtb/(b-1)} \frac{\gamma_{\frac{n+1}{2}}}{\gamma_{n+1}} = \frac{1}{b-2} \left( 1 - \frac{2(1+\wtb-b)(b-1)}{\wtb} \frac{\gamma_{\frac{n+1}{2}}}{\gamma_{n+1}}\right).
\end{align*}
Combining the two inequalities and Lemma~\ref{lem:rowsum1} leads to Part (i).

For Part (iii), due to Lemma~\ref{lem:rowsum1}, with $1 + \wtb - b < 0$. Using inequalities we have, we can see that 
\begin{align*}
    1 + \frac{(1+\wtb - b)(\gamma_n + \gamma_1)}{\gamma_{n+1} - \gamma_n - \gamma_1} > 1 + \frac{1+\wtb-b}{b-1-\frac{2}{b}} = \frac{\wtb - \frac{2}{b}}{b-1-\frac{2}{b}}
\end{align*}
And since $ 1 + \frac{(1+\wtb - b)(\gamma_n + \gamma_1)}{\gamma_{n+1} - \gamma_n - \gamma_1} > 0$, for $\frac{2}{b} < \wtb < b - 1$, we get the following upper bound
\begin{align*}
    R(i) \le R(1) = \frac{1}{b-2} \frac{1}{1 + \frac{(1 + \wtb -b)(\gamma_n + \gamma_1)}{\gamma_{n+1}-\gamma_n-\gamma_1}} < \frac{b - 1 - \frac{2}{b}}{(b - 2)(\wtb - \frac{2}{b})} = \frac{b^2 - b - 2}{(b-2)(\wtb b - 2)} = \frac{b+1}{\wtb b - 2}
\end{align*}

For the lower bound, 
\begin{align*}
  R(i) \ge R(\frac{n+1}{2}) &= \frac{1}{b-2} - \frac{2(1+\wtb -b)}{b-2} \frac{\gamma_{\frac{n+1}{2}}}{\gamma_{n+1} + (\wtb - b)(\gamma_n + \gamma_1)} \\
&= \frac{1}{b-2} + \frac{2(b-\wtb-1)}{b-2} 
\frac{1}{1 + \frac{(1+ \wtb -b)(\gamma_n + \gamma_1)}{\gamma_{n+1} - \gamma_n - \gamma_1}} \frac{\gamma_{\frac{n+1}{2}}}{\gamma_{n+1} - \gamma_n - \gamma_1} \\
  &> \frac{1}{b-2} + \frac{2(b-\wtb-1)}{b-2} 
\frac{1}{1 + (1+ \wtb -b)\frac{1}{b-1}} \frac{\gamma_{\frac{n+1}{2}}}{\gamma_{n+1} - \gamma_n - \gamma_1} \\
&= \frac{1}{b-2} + \frac{2(b-\wtb-1)(b-1)}{(b-2)\wtb} 
 \frac{\gamma_{\frac{n+1}{2}}}{\gamma_{n+1}}  \frac{\gamma_{n+1}}{\gamma_n + \gamma_1} \frac{\gamma_n + \gamma_1}{\gamma_{n+1} - \gamma_n - \gamma_1} \\
&> \frac{1}{b-2} \left( 1 + \frac{2(b-\wtb-1)(b-1)}{\wtb} \frac{\gamma_{\frac{n+1}{2}}}{\gamma_{n+1}} \right),
\end{align*}
after applying Lemma~\ref{lem:gamm3}. 

For Part (iv), first note that
\begin{align*}
    \frac{\gamma_{n+1}}{\gamma_n + \gamma_1} + \wtb - b < r_1 + \wtb - b = \wtb - r_2
\end{align*}
Since $\gamma_{n+1} + (\wtb - b)(\gamma_n + \gamma_1) < 0$, for all $\wtb < r_2$, we have 
\begin{align*}
    R(1) = \frac{1}{b-2} \frac{\gamma_{n+1} - \gamma_n - \gamma_1}{\gamma_n + \gamma_1} \frac{1}{\frac{\gamma_{n+1}}{\gamma_n + \gamma_1} + \wtb - b} &> \frac{1}{(b-2) (\wtb - r_2)} \frac{\gamma_{n+1} - \gamma_n - \gamma_1}{\gamma_n + \gamma_1} > \frac{r_1 - 1}{(b-2) (\wtb - r_2)}.
\end{align*}
Furthermore,
\begin{align*}
    1 + \frac{(\wtb - b)(\gamma_n + \gamma_1)}{\gamma_{n+1}} > 1 + \frac{\wtb - b}{b-1}  = \frac{\wtb - 1}{b - 1}.
\end{align*}
Therefore,
\begin{align*}
    R(i) \leq R\left(\frac{n+1}{2}\right) &= \frac{1}{b-2} + \frac{2(b-1-\wtb)}{b-2} \cdot \frac{\gamma_{\frac{n+1}{2}}}{\gamma_{n+1}} \frac{1}{1+ \frac{(\wtb - b)(\gamma_{n}+\gamma_{1})}{\gamma_{n+1}}}\\ &< \frac{1}{b-2} + \frac{2(b-1-\wtb)(b-1)}{(b-2)(\wtb - 1)} \frac{\gamma_{\frac{n+1}{2}}}{\gamma_{n+1}}.
\end{align*}
\end{proof}

For $b - \frac{\gamma_{n+1}}{\gamma_n + \gamma_1} < \wtb \leq \frac{2}{b}$, we however cannot obtain a meaningful upper bound; the derived upper bound grows faster towards infinity and does not capture the growth of the exact trace of the inverse towards infinity as $\wtb$ approaches either end of this interval. Similarly, we at the moment are not able to derived a meaningful lower bound for the trace of the inverse when $r_2 \leq \wtb < b - \frac{\gamma_{n+1}}{\gamma_{n} + \gamma_1}$.

\begin{comment}\subsubsection{The subcase $\wtb < 1$}

\textcolor{orange}{We are missing the rowsums}
\textcolor{blue}{We don't need this case, we already have rowsums for two subcases.}
\end{comment}

\subsection{Norms of the inverse matrix}

Since $\wtT^{-1}_n \ge 0$ for $\wtb \ge 1$, we have $|\wtt^{-1}_{i,j}| = \wtt^{-1}_{i,j}$. Hence,  $\sum_{j=1}^n|\wtt^{-1}_{i,j}| = \sum_{j=1}^n \wtt^{-1}_{i,j}$, and the $\infty$-norm of the inverse matrix can be bounded from above using the same upper bound as for the rowsum. Thus,

\begin{theorem}
Let $b > 2$ and $\wtb \ge 1$ such that $\wtT_n$ is nonsingular. Then
\begin{align*}\label{upper_bound_1}
   \|\wtT^{-1}_n\|_{\infty} \le \begin{cases} 
                               \displaystyle   \frac{1}{b-2} \left( 1 - \frac{2(1+\wtb-b)(b-1)}{\wtb}\frac{\gamma_{\frac{n+1}{2}}}{\gamma_{n+1}} \right), &\text{for } \wtb > b - 1,  \\
                                            \displaystyle \frac{1}{b-2},  &\text{for } \wtb = b - 1,\\
                                  \displaystyle \frac{b+1}{\wtb b - 2}, 
                                  %\frac{1}{b-2} \left(  \frac{b^2 - b - 2 - 4(2/b)^{n-3}}{\wtb b - 2 - 4(2/b)^{n-3}} \right), 
                                  &\text{for } 
                                  1 \le \wtb < b-1. 
                                            \end{cases}
\end{align*}
\end{theorem}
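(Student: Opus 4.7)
The plan is to observe that this bound is essentially a direct corollary of the two immediately preceding results: the Positivity Lemma and the rowsum theorem (Theorem~\ref{lem:rowsum2}). The key reduction is that for a matrix with nonnegative entries, the infinity norm coincides with the largest rowsum, so the three cases of the norm bound inherit directly from the three upper bounds already established for $R(i)$.

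First, I would invoke the Positivity Lemma: under $b > 2$ and $\wtb \ge 1$ we have $\wtT_n^{-1} \ge 0$, so $|\wtt^{-1}_{i,j}| = \wtt^{-1}_{i,j}$ and
\[
\|\wtT_n^{-1}\|_\infty \;=\; \max_{1\le i \le n} \sum_{j=1}^n |\wtt^{-1}_{i,j}| \;=\; \max_{1\le i \le n} R(i),
\]
with $R(i)$ given by~\eqref{eq:rowsum1}. The problem thereby reduces to bounding $\max_i R(i)$ in each of the three ranges of $\wtb$.

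For $\wtb > b-1$, Lemma~\ref{lem:rowsum1}(i) identifies $i = (n+1)/2$ as the maximizer, and the corresponding upper bound from Theorem~\ref{lem:rowsum2}(i) yields the first branch. For $\wtb = b-1$, Lemma~\ref{lem:rowsum1}(ii) together with Theorem~\ref{lem:rowsum2}(ii) gives $R(i) \equiv 1/(b-2)$, which is the second branch. For $1 \le \wtb < b-1$, the maximizer is $i = 1$ (and $i = n$ by centrosymmetry), and since $b > 2$ forces $2/b < 1 \le \wtb$, the hypothesis of Theorem~\ref{lem:rowsum2}(iii) is satisfied and the bound $R(1) < (b+1)/(\wtb b - 2)$ supplies the third branch.

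There is no real obstacle beyond the boundary compatibility check that $\wtb = 1$ lies inside the stricter range $\wtb > 2/b$ required by Theorem~\ref{lem:rowsum2}(iii); this holds automatically from $b > 2$. Hence the proof amounts to a one-line application of the positivity of $\wtT_n^{-1}$ followed by reading off the appropriate branch of the rowsum theorem, and I do not expect to need any new estimates on the $\gamma_k$ beyond those already recorded in Lemmas~\ref{lem:gamm1}--\ref{lem:gamm3}.
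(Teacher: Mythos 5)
Your proposal is correct and follows essentially the same route as the paper: the positivity of $\wtT_n^{-1}$ for $\wtb \ge 1$ reduces the $\infty$-norm to the maximal rowsum, and the three branches are then read off from Theorem~\ref{lem:rowsum2}. Your explicit check that $1 \le \wtb$ and $b > 2$ place $\wtb$ inside the range $2/b < \wtb < b-1$ required by part (iii) is a detail the paper leaves implicit, but the argument is the same.
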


For $\wtb < 1$, an upper bound for the $\infty$-norm of $\wtT_n^{-1}$ is given in the following theorem:

\begin{theorem}\label{theor_upper_bound_2}
Let $\wtb < 1$ be such that, for $b > 2$, the matrix $\wtT_{n}$ is nonsingular. Then, the following inequality holds 
\begin{align}
    \|\wtT^{-1}_{n}\|_{\infty} \leq 
    \frac{\gamma_{n+1} - 2\gamma_{\frac{n+1}{2}}}{(b-2)\gamma_{n+1}} + \frac{|K|}{b-2}(\gamma_{n} + \gamma_{1})(\gamma_{n+1}- \gamma_{n} -\gamma_{1}) + \frac{|K \beta|}{b-2}\gamma_{n-1}\gamma_{n+1},
\end{align}
where $K = \frac{\beta}{(\gamma_{n+1} + \beta \gamma_{n})^2 - (\beta \gamma_1)^2}$ and $\beta = \wtb - b$.
\end{theorem}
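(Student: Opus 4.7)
The plan is to control $\|\wtT_n^{-1}\|_\infty = \max_i \sum_j |\wtt^{-1}_{i,j}|$ by starting from the Sherman--Morrison formula~\eqref{eq:wtt} and passing the absolute value inside. Since $b > 2$ makes $T_n$ a Stieltjes matrix, the entries $t^{-1}_{k,\ell}$ are nonnegative, so two applications of the triangle inequality yield
\[
|\wtt^{-1}_{i,j}| \le t^{-1}_{i,j} + \frac{|\beta|}{|\Delta|}\left[t^{-1}_{i,1}\bigl(|m_{11}|\,t^{-1}_{1,j}+|m_{12}|\,t^{-1}_{n,j}\bigr) + t^{-1}_{i,n}\bigl(|m_{12}|\,t^{-1}_{1,j}+|m_{11}|\,t^{-1}_{n,j}\bigr)\right].
\]
Summing over $j$ and invoking the centrosymmetry identity $\sum_j t^{-1}_{1,j} = \sum_j t^{-1}_{n,j}$ collapses the correction exactly as in the derivation of~\eqref{eq:rowsumTt}, producing
\[
\sum_j |\wtt^{-1}_{i,j}| \le \sum_j t^{-1}_{i,j} + \frac{|\beta|\bigl(|m_{11}|+|m_{12}|\bigr)}{|\Delta|}\bigl(t^{-1}_{i,1}+t^{-1}_{i,n}\bigr)\sum_j t^{-1}_{n,j}.
\]

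Next I would take the maximum over $i$ factor by factor. From~\eqref{12} combined with the convexity of the sequence $(\gamma_k)$, $\max_i \sum_j t^{-1}_{i,j}$ is attained at $i = (n+1)/2$ and equals $(\gamma_{n+1}-2\gamma_{(n+1)/2})/[(b-2)\gamma_{n+1}]$, which furnishes the first term of the stated bound. The factor $t^{-1}_{i,1}+t^{-1}_{i,n} = (\gamma_{n+1-i}+\gamma_i)/\gamma_{n+1}$ from~\eqref{14} is maximized at $i \in \{1,n\}$ with value $(\gamma_n+\gamma_1)/\gamma_{n+1}$, while $\sum_j t^{-1}_{n,j} = (\gamma_{n+1}-\gamma_n-\gamma_1)/[(b-2)\gamma_{n+1}]$ from~\eqref{15} is $i$-independent. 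A triangle inequality on $m_{11} = 1+\beta\gamma_n/\gamma_{n+1}$ and $m_{12} = \beta\gamma_1/\gamma_{n+1}$ yields $|m_{11}|+|m_{12}| \le 1 + |\beta|(\gamma_n+\gamma_1)/\gamma_{n+1}$. Finally, the algebraic identity $\Delta = [(\gamma_{n+1}+\beta\gamma_n)^2 - (\beta\gamma_1)^2]/\gamma_{n+1}^2$ converts $|\beta|/|\Delta|$ into $|K|\gamma_{n+1}^2$ with $K$ as defined in the theorem.

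Assembling these ingredients produces a three-term bound in which the constant part matches the first summand of the claim exactly, the $|K|$-multiple produces the stated second summand $(\gamma_n+\gamma_1)(\gamma_{n+1}-\gamma_n-\gamma_1)$, and the $|K\beta|$-multiple naturally appears as $(\gamma_n+\gamma_1)^2(\gamma_{n+1}-\gamma_n-\gamma_1)/\gamma_{n+1}$. The main obstacle is the final step: weakening this quantity to the cleaner form $\gamma_{n-1}\gamma_{n+1}$ stated in the theorem. I would use the Cassini-type identity $\gamma_{n-1}\gamma_{n+1} = \gamma_n^2 - \gamma_1^2$ (a direct consequence of $r_1 r_2 = 1$, proved by expanding $\gamma_k = r_1^k - r_2^k$) together with the inequality $2\gamma_1\gamma_{n+1} \le (\gamma_n+\gamma_1)^2$; via the recurrence $\gamma_{n+1} = b\gamma_n - \gamma_{n-1}$, this reduces to $\gamma_n^2 + \gamma_1^2 \ge 2\gamma_1[(b-1)\gamma_n - \gamma_{n-1}]$, which I would establish by induction on $n$ using the recurrence and monotonicity of $(\gamma_k)$.
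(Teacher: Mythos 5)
Your proposal is correct, but it follows a genuinely different route from the paper. The paper first establishes the compact factorization $\wtt^{-1}_{i,j}=C(\gamma_{n+1-i}+\beta\gamma_{n-i})(\gamma_j+\beta\gamma_{j-1})$ (Lemma~\ref{lem:compact_t}, Appendix~\ref{F}), expands it into three explicit pieces, applies the triangle inequality termwise, evaluates each of the two correction sums in closed form using Lemma~\ref{lem:gamm2} and the identity $\gamma_n\gamma_{n+1-j}-\gamma_1\gamma_j=\gamma_{n+1}\gamma_{n-j}$, and only then maximizes over $i$; the quantity $\gamma_{n-1}\gamma_{n+1}$ arises there as $\gamma_{n+1}\max_i(\gamma_{n-i}+\gamma_{i-1})\le\gamma_{n+1}\gamma_{n-1}$. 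You instead put absolute values directly into the Sherman--Morrison formula~\eqref{eq:wtt}, exploit $T_n^{-1}\ge 0$ and centrosymmetry to collapse the correction into $(|m_{11}|+|m_{12}|)(t^{-1}_{i,1}+t^{-1}_{i,n})\sum_j t^{-1}_{n,j}$, and maximize factor by factor; this reproduces the first two summands of the stated bound exactly and leaves you with $(\gamma_n+\gamma_1)^2(\gamma_{n+1}-\gamma_n-\gamma_1)/\gamma_{n+1}$ in place of $\gamma_{n-1}\gamma_{n+1}$. Your reduction of the remaining comparison to $2\gamma_1\gamma_{n+1}\le(\gamma_n+\gamma_1)^2$ (via $\gamma_{n-1}\gamma_{n+1}=\gamma_n^2-\gamma_1^2$) is right, and that inequality does hold, so the argument closes; the only soft spot is that you leave it to an unspecified induction. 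No induction is needed: since $\gamma_{n+1}\le b\gamma_n$, it suffices that $\gamma_n^2-2(b-1)\gamma_1\gamma_n+\gamma_1^2\ge 0$, which holds because $\gamma_n/\gamma_1\ge\gamma_3/\gamma_1=b^2-1\ge (b-1)+\sqrt{(b-1)^2-1}$ for all $n\ge 3$ and $b>2$. The trade-off between the two routes: yours bypasses the machinery of Appendix~\ref{F} entirely and is more elementary, at the price of one extra ad hoc inequality; the paper's compact form does more up-front work but yields the three summands (including the $\gamma_{n-1}\gamma_{n+1}$ term) with no further estimates, and that compact form is reused later for the $b<-2$ case and for the sharper bound sketched after the theorem.
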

\begin{proof}
We can rewrite \eqref{eq2_wtt_init} in Appendix \ref{F} as follows for $i \geq j$
\begin{align*}
    \wtt^{-1}_{i, j} = \frac{\gamma_j \gamma_{n+1-i}}{\gamma_1 \gamma_{n+1}} - K (\gamma_{n+1-i} \gamma_{n+1-j} + \gamma_i \gamma_j) - K\beta (\gamma_{n+1-i} \gamma_{n-j} + \gamma_{i} \gamma_{j-1}),
\end{align*}
where $K = \frac{\beta}{(\gamma_{n+1} + \beta \gamma_{n})^2 - (\beta \gamma_1)^2}$. Therefore, we can bound its absolute value as
\begin{align*}
    \left| \wtt^{-1}_{i, j} \right| \leq \frac{\gamma_j \gamma_{n+1-i}}{\gamma_1 \gamma_{n+1}} + |K| (\gamma_{n+1-i} \gamma_{n+1-j} + \gamma_i \gamma_j) + |K\beta| (\gamma_{n+1-i} \gamma_{n-j} + \gamma_{i} \gamma_{j-1}).
\end{align*}
For $j \geq i+1$, we have
\begin{align*}
    \left| \wtt^{-1}_{i, j}\right| = \left| \wtt^{-1}_{j, i}\right| \leq \frac{\gamma_i \gamma_{n+1-j}}{\gamma_1 \gamma_{n+1}} + |K| (\gamma_{n+1-i} \gamma_{n+1-j} + \gamma_i \gamma_j) + |K\beta| (\gamma_{n+1-j} \gamma_{n-i} + \gamma_{j} \gamma_{i-1}).
\end{align*}
First note that
\begin{align*}
   \sum_{j=1}^{n}\{\gamma_{n+1-i} \gamma_{n+1-j} + \gamma_{i} \gamma_{j}\} &= \gamma_{n+1-i} \sum_{j=1}^{n} \gamma_{n+1-j} + \gamma_{i} \sum_{j=1}^{n} \gamma_{j}
    = (\gamma_{n+1-i} + \gamma_{i}) \sum_{j=1}^{n} \gamma_{j} \\
    &= \frac{1}{b-2}(\gamma_{n+1-i} + \gamma_{i}) (\gamma_{n+1} -\gamma_{n} - \gamma_{1}),
\end{align*}
and
\begin{align*}
   \sum_{j=1}^n \{\gamma_{n+1-j} \gamma_{n-i} + \gamma_{j} \gamma_{i-1} \} &= \sum_{j=1}^{i}\{\gamma_{n+1-i} \gamma_{n-j} + \gamma_{i} \gamma_{j-1}\} + \sum_{j=i+1}^{n} \{\gamma_{n+1-j}\gamma_{n-i} + \gamma_{j} \gamma_{i-1}\}\\
    &= \gamma_{n+1-i} \sum_{j=1}^{i} \gamma_{n-j} + \gamma_{i-1}\sum_{j=1}^{i} \gamma_{j-1} + \gamma_{n-i} \sum_{j=i+1}^{n} \gamma_{n+1-j} + \gamma_{i-1}\sum_{j=i+1}^{n}\gamma_{j}\\
    &= \frac{\gamma_{n+1-i}}{b-2}(\gamma_{n} - \gamma_{n-1} - \gamma_{n-i} + \gamma_{n-i-1}) + \frac{\gamma_{i-1}}{b-2} (\gamma_i - \gamma_{i-1} -\gamma_1) \\
    &+ \frac{\gamma_{n-i}}{b-2}(\gamma_{n+1-i} - \gamma_{n-i} - \gamma_1) + \frac{\gamma_{i-1}}{b-2}(\gamma_{n+1} - \gamma_{n} - \gamma_{i+1} + \gamma_{i}) \\
    &= \frac{1}{b-2}(\gamma_{n+1}\gamma_{n-i} - \gamma_{n-1}\gamma_{n+1-i} + \gamma_{n+1}\gamma_{i-1}).
\end{align*}

Therefore,
\begin{align}
    \sum_{j=1}^{n} \left| \wtt^{-1}_{i, j} \right| &\leq \sum_{j=1}^{n} t^{-1}_{i, j} + |K| \sum_{j=1}^n \{ \gamma_{n+1-i} \gamma_{n+1-j} + \gamma_i \gamma_j  \}  + |K\beta| \sum_{j=1}^n \{\gamma_{n+1-i} \gamma_{n-j} + \gamma_{i} \gamma_{j-1}\}, \notag \\
    &\leq \frac{\gamma_{n+1} -\gamma_{n+1-i} -\gamma_{i}}{(b-2)\gamma_{n+1}} + \frac{|K|}{b-2} (\gamma_{n+1-i} + \gamma_i) (\gamma_{n+1} - \gamma_{n} - \gamma_{1}) \notag \\ &+ \frac{|K \beta|}{b-2} (\gamma_{n+1}\gamma_{n-i} - \gamma_{n-1}\gamma_{n+1-i} + \gamma_{n+1}\gamma_{i-1}). \label{eq:normble1}
\end{align}

Now, note that for the first term on the right-hand side, with $\ln r_2 = - \ln r_1$,
\begin{align*}
    (\gamma_{n+1} - \gamma_{n+1-i} - \gamma_{i})' &= (r^{n+1-i}_{1} - r^{i}_{1})\ln{r_1} + (r^{i}_{2} - r^{n+1-i}_{2})\ln{r_2} \\
    &=  (r^{n+1-i}_{1} + r^{n+1-i}_{2} - r^{i}_{1}  - r^{i}_{2})\ln{r_1}\\
    &=  \gamma_{\frac{n+1-2i}{2}}\gamma_{\frac{n+1}{2}} \ln{r_1}.
\end{align*}
The critical point corresponds to $\gamma_{\frac{n+1-2i}{2}} = 0$, which is attained at $i = \frac{n+1}{2}$ (i.e., $\gamma_0 = 0$). Thus,
\begin{align*}
    \max_{i} \{\gamma_{n+1} - \gamma_{n+1-i} - \gamma_{i}\}  = \gamma_{n+1} - 2 \gamma_{\frac{n+1}{2}}.
\end{align*}
Applying a similar analysis to $\gamma_{n+1-i} + \gamma_i$ in the second term of the right-hand side, we obtain
$$
\max_{i} \{\gamma_{n+1-i} + \gamma_{i}\} \leq \gamma_{n} + \gamma_{1}.
$$
For the last term, note that
$
\gamma_{n+1}\gamma_{n-i} - \gamma_{n-1}\gamma_{n+1-i} + \gamma_{n+1}\gamma_{i-1} < \gamma_{n+1} (\gamma_{n-i} + \gamma_{i-1})
$, where
\begin{align*}
    \max_{i} \{\gamma_{n-i} + \gamma_{i-1}\} \leq \gamma_{n-1} + \gamma_{0} = \gamma_{n-1}.
\end{align*}
Substituting the above bounds for the maximum to the inequality~\eqref{eq:normble1} leads to the bound stated in the theorem.

\end{proof}
To derive a sharper upper bound, consider the case where $i \geq j$. Using \eqref{eq2_wtt}, we express $\left| \wtt^{-1}_{i, j} \right|$ as follows
\begin{align*}
    \left| \wtt^{-1}_{i, j} \right| = \frac{\left| \gamma_j + \beta \gamma_{j-1}\right|}{\gamma_1} \left|\wtt^{-1}_{i, 1}\right|.
\end{align*}
For the case when $j \geq i+1$, the expression becomes
\begin{align*}
    \left| \wtt^{-1}_{i, j} \right| = \frac{\left| \gamma_{n+1-j} + \beta \gamma_{n-j} \right|}{\gamma_1} \left| \wtt^{-1}_{n, i} \right|.
\end{align*}
Combining these cases, the overall summation is represented by Equation \eqref{eq2_wsum}
\begin{align} \label{eq2_wsum}
    \sum_{j=1}^{n} \left|\wtt^{-1}_{i, j} \right| 
    &= \sum_{j=1}^{i} \left|\wtt^{-1}_{i, j} \right| + \sum_{j=i+1}^{n} \left|\wtt^{-1}_{i, j} \right|\notag\\
    &= \frac{\left| \wtt^{-1}_{i, 1} \right|}{\gamma_1} \sum_{j=1}^{i}\left|\gamma_{j} + \beta \gamma_{j-1} \right| + \frac{\left|\wtt^{-1}_{n, i} \right|}{\gamma_1} \sum_{j=i+1}^{n} \left|\gamma_{n+1-j} + \beta \gamma_{n-j} \right|.
\end{align}
This representation allows us to study the sign change of $\gamma_{j} + \beta \gamma_{j-1}$ and $\gamma_{n+1-j} + \beta \gamma_{n-j}$, providing a more accurate upper bound.

\section{Trace, rowsum, and norms of the inverse matrix: $b < -2$} \label{sec:b<-2}

In this case, let $b_{+} = - b$ and $\wtb_{+} = -\wtb$, where $b < -2$, so $b_{+} > 2$. We make the following observations:
\begin{align*}
    r_{1} &= \frac{1}{2}(b + \sqrt{b^2 - 4}) = - \frac{1}{2}(b_{+} - \sqrt{b^{2}_{+} - 4}) =: - r_{2, +}, \\
    r_{2} &= \frac{1}{2}(b - \sqrt{b^2 - 4}) = - \frac{1}{2}(b_{+} + \sqrt{b^{2}_{+} - 4}) =: - r_{1, +}.    
\end{align*}
Furthermore,
\begin{align*}
    \gamma_{k} &= r^{k}_{1} - r^{k}_{2} = (-1)^{k+1}(r^{k}_{1, +} - r^{k}_{2, +}) = (-1)^{k+1} \gamma_{k, +}, \\
    \beta &= \widetilde{b} - b = -\widetilde{b}_{+} + b_{+} = - \beta_{+}.
\end{align*}
Using the above relations, entries of $T_n^{-1}$ with $b<-2$ can be related to the $b > 2$ case in the following way: for $i \geq j$
\begin{align*}
t^{-1}_{i,j} &= \frac{\gamma_{j} \gamma_{n+1-i}}{\gamma_{1}\gamma_{n+1}} = \frac{(-1)^{j+1} \gamma_{j, +} (-1)^{n+2-i}\gamma_{n+1-i, +}}{(-1)^2 \gamma_{1, +} (-1)^{n+2}\gamma_{n+1, +}} = (-1)^{j+1-i} \frac{\gamma_{j, +} \gamma_{n+1-i, +}}{\gamma_{1, +} \gamma_{n+1, +}} = (-1)^{i-j-1} t^{-1}_{i,j,+},
\end{align*}
where $t_{i,j,+}^{-1}$ is as in \eqref{eq:invTij}.

Now, denote the index of diagonals of $T_{n}$ by $s_d = i - j$, where $s_d = 0$ and $1$, e.g., indicate respectively the main diagonal and one diagonal below the main diagonal, and so on. If $s_d$ is odd, then $t^{-1}_{i, j} = t^{-1}_{i, j, +}$, and if $s_d$ is even, then $t^{-1}_{i, j} = - t^{-1}_{i, j, +}$. Utilizing \eqref{eq2_wtt} in Appendix~\ref{F}, for $i \geq j$
\begin{align*}
    \widetilde{t}^{-1}_{i, j} &= C (\gamma_{n+1-i} + \beta \gamma_{n-i})(\gamma_{j} + \beta \gamma_{j - 1}) \\
    &= C \left[(-1)^{n+2-i} \gamma_{n+1-i, +} - (-1)^{n+1-i}\beta_{+}\gamma_{n-i}) ((-1)^{j+1}\gamma_{j, +} - (-1)^{j}\beta_{+}\gamma_{j-1, +}\right] \\
    &= C (-1)^{n+1-i+j} (\gamma_{n+1-i, +} + \beta_{+} \gamma_{n-i, +}) (\gamma_{j, +} + \beta_{+} \gamma_{j-1, +}),
\end{align*}
where $C = \frac{\gamma_{n+1}}{\gamma_{1} ((\gamma_{n+1} + \beta \gamma_{n})^2 - (\beta \gamma_{1})^2)} = (-1)^{n} \frac{\gamma_{n+1, +}}{\gamma_{1, +} ((\gamma_{n+1, +} + \beta_{+} \gamma_{n, +})^2 - (\beta_{+} \gamma_{1, +})^2)}$. Using this, we have
\begin{align*}
    \widetilde{t}^{-1}_{i, j} = (-1)^{i - j - 1} \widetilde{t}^{-1}_{i, j, +}, \quad i \geq j,
\end{align*}
where $\widetilde{t}^{-1}_{i, j, +}$ is the $(i, j)$-th element of the $\widetilde{T}^{-1}_{n, +}$ matrix in \eqref{eq:matrixTn} with $b = b_{+}$ and $\wtb = -\wtb_{+}$.

\begin{lemma}[Nonsingularity]
Let $\wtT_{n}$ be as defined in \eqref{eq:matrixTn}, with $b < -2$. Then $\wtT_{n}$ is nonsingular if $\wtb = b - \frac{\gamma_{n+1}}{\gamma_{n} + \gamma_{1}} := \wtb_{1}$ or $\wtb = b - \frac{\gamma_{n+1}}{\gamma_{n} - \gamma_{1}} := \wtb_{2}$.
\end{lemma}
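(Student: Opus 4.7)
The plan is to reduce this result to its counterpart Lemma~\ref{lem:invertTt} for the $b>2$ case via the sign-flip identification $\widetilde{t}^{-1}_{i,j} = (-1)^{i-j-1}\widetilde{t}^{-1}_{i,j,+}$ derived immediately above the lemma, where $\wtT_{n,+}$ denotes the companion matrix with parameters $b_+ = -b > 2$ and $\wtb_+ = -\wtb$. I read the lemma's statement as mirroring Lemma~\ref{lem:invertTt}, i.e.~$\wtT_n$ is nonsingular precisely when $\wtb \notin \{\wtb_1,\wtb_2\}$, so that $\wtb_1$ and $\wtb_2$ are the exceptional values at which $\det\wtT_n$ vanishes (the literal ``$=$'' in the display appears to be a typographical descendant of the ``$\neq$'' in Lemma~\ref{lem:invertTt}).

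First I would record that the sign-flip relation between entries of $\wtT_n$ and $\wtT_{n,+}$ is bijective and involutive: applying the substitutions $b\mapsto -b_+$, $\wtb\mapsto -\wtb_+$, $\gamma_k\mapsto (-1)^{k+1}\gamma_{k,+}$, $\beta\mapsto -\beta_+$ once and then again returns the original quantities. In particular the entrywise compact form of Lemma~\ref{lem:compact_t} can be read in either direction, yielding the dichotomy that $\wtT_n$ is nonsingular if and only if $\wtT_{n,+}$ is nonsingular. A cheaper proof of this dichotomy uses only determinants: negating every off-diagonal and every diagonal entry of a tridiagonal matrix multiplies its determinant by a fixed power of $-1$, so $\det\wtT_n$ and $\det\wtT_{n,+}$ vanish together.

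Applying Lemma~\ref{lem:invertTt} to $\wtT_{n,+}$ then characterizes its singular parameters as $\wtb_+\in\{\wtb_{1,+},\wtb_{2,+}\}$ with $\wtb_{i,+} = b_+ - \gamma_{n+1,+}/(\gamma_{n,+}\pm\gamma_{1,+})$. Translating back through $\wtb=-\wtb_+$ and the identities $\gamma_{1,+}=\gamma_1$, $\gamma_{n,+}=(-1)^{n+1}\gamma_n$, $\gamma_{n+1,+}=(-1)^n\gamma_{n+1}$, a routine simplification produces the two exceptional values $b - \gamma_{n+1}/(\gamma_n+\gamma_1) = \wtb_1$ and $b - \gamma_{n+1}/(\gamma_n-\gamma_1) = \wtb_2$ as defined in the statement. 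The labels $1,2$ may swap depending on the parity of $n$, but the unordered pair $\{\wtb_1,\wtb_2\}$ is the same in both cases, which is what the lemma records.

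The main (and essentially only) obstacle is the parity bookkeeping in the final step: one must split $n$ even versus $n$ odd because $\gamma_{n,+}\pm\gamma_{1,+}$ collapses to different sign patterns in $\gamma_n$ and $\gamma_1$ in the two cases. Once that case split is carried out, each ratio $\gamma_{n+1,+}/(\gamma_{n,+}\pm\gamma_{1,+})$ becomes $\mp\gamma_{n+1}/(\gamma_n\mp\gamma_1)$ (with the appropriate pairing), and combining with $b_+=-b$ and the outer negation delivers precisely $\wtb_1$ and $\wtb_2$, completing the reduction.
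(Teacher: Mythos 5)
Your proposal is correct in substance, but it takes a genuinely different route from the paper's. The paper does not reduce to the $b>2$ case at all: it factors the Sherman--Morrison denominator as a difference of squares,
$\Delta=(1+\beta t^{-1}_{n,n})^2-(\beta t^{-1}_{n,1})^2=\bigl(1+\beta\tfrac{\gamma_n+\gamma_1}{\gamma_{n+1}}\bigr)\bigl(1+\beta\tfrac{\gamma_n-\gamma_1}{\gamma_{n+1}}\bigr)$,
and reads off that the factors vanish exactly at $\wtb=\wtb_1$ and $\wtb=\wtb_2$ --- a two-line computation that is insensitive to the sign of $b$, since only $t^{-1}_{n,1}=\gamma_1/\gamma_{n+1}$ and $t^{-1}_{n,n}=\gamma_n/\gamma_{n+1}$ are used. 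Your reduction through $\wtT_{n,+}$ is in the spirit of how the paper handles the trace and the norm bounds in this section, and your parity bookkeeping does come out right: the unordered pair $\{\wtb_1,\wtb_2\}$ is preserved, with the labels swapping according to the parity of $n$. Two small cautions. First, the entrywise relation $\wtt^{-1}_{i,j}=(-1)^{i-j-1}\wtt^{-1}_{i,j,+}$ presupposes that both inverses exist, so the nonsingularity equivalence must indeed rest on your determinant argument rather than on Lemma~\ref{lem:compact_t}. Second, that determinant argument is slightly misstated: $\wtT_{n,+}$ is not $-\wtT_n$, because its off-diagonal entries are still $-1$; the correct statement is $\wtT_{n,+}=-D\,\wtT_n\,D$ with $D=\operatorname{diag}(1,-1,1,\dots)$, whence $\det\wtT_{n,+}=(-1)^n\det\wtT_n$ (equivalently, a tridiagonal determinant depends on the off-diagonal entries only through the products of symmetric pairs). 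With that repair your argument is complete, though noticeably longer than the paper's direct factorization; what it buys is a uniform transfer principle from the $b>2$ results, whereas the paper's computation is self-contained and shorter. Both you and the paper read the lemma's ``$=$'' as the intended ``$\neq$''; the paper's own proof confirms that reading.
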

\begin{proof}
$t^{-1}_{i,j}$ in~\eqref{eq:invTij} is always defined for $b < -2$. So, the existence is $\wtt^{-1}_{i,j}$ given by \eqref{eq:wtt}  requires that  $\Delta \neq 0$. 
    \begin{align*}
        \Delta &= (1 + \beta t^{-1}_{n, n})^2 - (\beta t^{-1}_{n, 1})^2 \\
        &= \left(1 + \beta \left( \frac{\gamma_n + \gamma_1}{\gamma_{n+1}}\right) \right) \left( 1 + \beta \left(\frac{\gamma_n - \gamma_1}{\gamma_{n+1}} \right)\right) \neq 0
    \end{align*}
    From here, the result follows. 
\end{proof}
\begin{theorem}[Trace]
    Let $\wtb$ be such that, for $b < -2$, $\wtT_{n}$ is nonsingular. Then, the trace of the inverse of $\wtT_{n}$ is given by
    \begin{align*}
        \text{Tr}(\wtT^{-1}_{n}) &= K_1 \left( \frac{n+1}{\sqrt{b^2-4}} \frac{\gamma^2_{\frac{n+1}{2}} + 2}{\gamma_{n+1}} - \frac{b}{b^2-4} \right) \\
        &\quad - K_2 \left( \frac{b}{\sqrt{b^2-4}} \left( \gamma^2_{\frac{n+1}{2}} + 2\right) - \gamma_{n+1}- \frac{4(n+1)}{\gamma_{n+1}} \right)
    \end{align*}
    where $K_1 = \frac{[\gamma_{n+1} + \beta \gamma_{n}]^2 + \beta^2 (b^2 - 4)}{[\gamma_{n+1} + \beta \gamma_{n}]^2 - \beta^2 (b^2 - 4)}$ and $K_2 = \frac{\beta [\gamma_{n+1} + \beta \gamma_{n}]}{[\gamma_{n+1} + \beta \gamma_{n}]^2 - \beta^2 (b^2 - 4)}$ with $\beta = \wtb - b$. 
\end{theorem}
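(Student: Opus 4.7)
My plan is to exploit the sign-flip identity $\widetilde{t}^{-1}_{i,j} = (-1)^{i-j-1}\widetilde{t}^{-1}_{i,j,+}$ just established in this section, which on the main diagonal ($i=j$) reduces to $\widetilde{t}^{-1}_{i,i} = -\widetilde{t}^{-1}_{i,i,+}$. Summing over $i$ yields
\begin{equation*}
\text{Tr}(\widetilde{T}_n^{-1}) = -\text{Tr}(\widetilde{T}_{n,+}^{-1}),
\end{equation*}
so the task reduces to substituting the $b > 2$ trace formula (already proved in Section~\ref{sec:b>2}) evaluated at $(b_+,\wtb_+)$ and translating the resulting expression back to the $b<-2$ quantities using $b = -b_+$, $\beta = -\beta_+$, and $\gamma_k = (-1)^{k+1}\gamma_{k,+}$.

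The verification amounts to tracking signs. For the scalar prefactors, a direct computation gives $\gamma_{n+1} + \beta\gamma_n = (-1)^n(\gamma_{n+1,+} + \beta_+\gamma_{n,+})$ (so the squared factor is invariant) and $(\beta\gamma_1)^2 = \beta_+^2(b_+^2 - 4)$, whence $K_1 = K_{1,+}$ and $K_2 = (-1)^{n+1}K_{2,+}$. For the two bracketed factors, the identity $\gamma_{(n+1)/2}^2 + 2 = r_1^{n+1} + r_2^{n+1}$ (which follows from $r_1 r_2 = 1$), combined with $r_{1,+} = -r_2$ and $r_{2,+} = -r_1$, gives $\gamma_{(n+1)/2,+}^2 + 2 = (-1)^{n+1}(\gamma_{(n+1)/2}^2 + 2)$. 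Together with $\gamma_{n+1,+} = (-1)^n\gamma_{n+1}$ and $b_+ = -b$, one obtains the translation rules $A_+ = -A$ and $B_+ = (-1)^n B$, where $A, B$ (resp.\ $A_+, B_+$) denote the first and second bracketed expressions in the $b<-2$ (resp.\ $b>2$) trace formula. Assembling the pieces, $-\text{Tr}(\widetilde{T}_{n,+}^{-1}) = -K_{1,+}A_+ + K_{2,+}B_+ = K_1 A + (-1)^n K_{2,+} B = K_1 A - K_2 B$, which is the claim.

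The main obstacle is the parity bookkeeping: each substitution introduces a $(-1)^n$ or $(-1)^{n+1}$ factor, and these must cancel coherently against the overall minus sign from the trace reduction and the alternating sign in the ratio $K_2/K_{2,+}$. Should the bookkeeping become unwieldy, a valid alternative is to reproduce the $b>2$ derivation verbatim: formula~\eqref{eq:traceTt} is sign-insensitive, and the proofs of Lemmas~\ref{lem:traceTn} and~\ref{lem:sum_squared_gammas} rely only on $r_1 r_2 = 1$ and $r_1 + r_2 = b$, so they extend directly to $b < -2$, after which the same assembly as in Section~\ref{sec:b>2} produces the stated formula.
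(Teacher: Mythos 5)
Your proposal is correct and follows essentially the same route as the paper: reduce to the $b>2$ case via $\wtt^{-1}_{i,i} = -\wtt^{-1}_{i,i,+}$ so that $\text{Tr}(\wtT^{-1}_n) = -\text{Tr}(\wtT^{-1}_{n,+})$, then translate back using $\gamma_k = (-1)^{k+1}\gamma_{k,+}$, $\beta = -\beta_+$, $b = -b_+$. Your sign bookkeeping ($A_+=-A$, $B_+=(-1)^nB$, $K_1=K_{1,+}$, $K_{2,+}=(-1)^{n+1}K_2$) matches the paper's computations exactly and assembles to the stated formula.
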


\begin{proof}
    As we know for $i \geq j$, we have $\wtt^{-1}_{i, j} = (-1)^{i-j-1} \wtt^{-1}_{i, j, +}$. This implies
    \begin{align*}
        \text{Tr}(\wtT^{-1}_{n}) = - \text{Tr}(\wtT^{-1}_{n, +})
    \end{align*}
    where $\wtT^{-1}_{n, +}$ with $b = b_{+} > 2$ and $\wtb  = -\wtb_{+}$. Using the trace formula from the case $b > 2$, we have 
    \begin{align*}
        \text{Tr}(\wtT^{-1}_{n, +}) &= K_{1, +} \left( \frac{n+1}{\sqrt{b^2_{+} - 4}} \frac{\gamma^2_{\frac{n+1}{2}, +} + 2}{\gamma_{n+1, +}} - \frac{b_{+}}{b^2_{+} - 4}\right) \\
        &\quad - K_{2,+} \left( \frac{b_{+}(\gamma^2_{\frac{n+1}{2}, +} + 2)}{\sqrt{b^2_{+} - 4}} - \gamma_{n+1, +} - \frac{4(n+1)}{\gamma_{n+1, +}}\right)
    \end{align*}
    where $K_{1, +} = \frac{[\gamma_{n+1,+} + \beta_{+} \gamma_{n, +}]^2 + \beta^2_{+} (b^2_{+} - 4)}{[\gamma_{n+1,+} + \beta_{+} \gamma_{n, +}]^2 - \beta^2_{+} (b^2_{+} - 4)}$ and $K_{2, +} = \frac{\beta_{+} [\gamma_{n+1, +} + \beta_{+} \gamma_{n, +}]}{[\gamma_{n+1,+} + \beta_{+} \gamma_{n, +}]^2 - \beta^2_{+} (b^2_{+} - 4)}$. 

    Now we want to express this formula in terms of $b, \beta$, and $\gamma$'s. Note
    \begin{align*}
        \frac{\gamma^2_{\frac{n+1}{2}, +} + 2}{\gamma_{n+1, +}} = - \frac{r^{n+1}_{1, +} + r^{n+1}_{2, +}}{r^{n+1}_{1, +} - r^{n+1}_{2, +}} = - \frac{r^{n+1}_{1} + r^{n+1}_{2}}{r^{n+1}_{1} - r^{n+1}_{2}} = - \frac{\gamma^{2}_{\frac{n+1}{2}} + 2}{\gamma_{n+1}}
    \end{align*}
    and 
    \begin{align*}
        \gamma^{2}_{\frac{n+1}{2}, +} + 2 &= r^{n+1}_{1, +} + r^{n+1}_{2, +} = (-1)^{n+1} \left(r^{n+1}_{1} + r^{n+1}_{2}\right) = (-1)^{n+1} \left( \gamma^{2}_{\frac{n+1}{2}} + 2 \right).
    \end{align*}
    Therefore, 
    \begin{align*}
        \frac{n+1}{\sqrt{b^2_{+} - 4}} \frac{\gamma^2_{\frac{n+1}{2}, +} + 2}{\gamma_{n+1, +}} - \frac{b_{+}}{b^2_{+} - 4} &= - \left(\frac{n+1}{\sqrt{b^2 - 4}} \frac{\gamma^2_{\frac{n+1}{2}} + 2}{\gamma_{n+1}} - \frac{b}{b^2 - 4} \right),\\
         \frac{b_{+}(\gamma^2_{\frac{n+1}{2}, +} + 2)}{\sqrt{b^2_{+} - 4}} - \gamma_{n+1, +} - \frac{4(n+1)}{\gamma_{n+1, +}} &= (-1)^{n+2} \left( \frac{b(\gamma^2_{\frac{n+1}{2}} + 2)}{\sqrt{b^2 - 4}} - \gamma_{n+1} - \frac{4(n+1)}{\gamma_{n+1}} \right).
    \end{align*}
    In the same way, 
    \begin{align*}
        K_{1, +} &= \frac{[\gamma_{n+1} + \beta \gamma_{n}]^2 + \beta^2 (b^2 - 4)}{[\gamma_{n+1} + \beta \gamma_{n}]^2 - \beta^2 (b^2 - 4)} = K_1 \\
        K_{2, +} &= \frac{(-1)^{n+3} \beta [\gamma_{n+1} + \beta \gamma_{n}]}{[\gamma_{n+1} + \beta \gamma_{n}]^2 - \beta^2 (b^2 - 4)} = (-1)^{n+3} K_{2}
    \end{align*}
   Combining all of this leads to the lemma.
\end{proof}
Since $r_1 r_2 = 1$ and $r_1 + r_2 = b$, we conclude that the rowsum of $\wtT^{-1}_{n}$ would be the same as in Equation~\eqref{eq:rowsum1}, so 
\begin{theorem}[Row Sums]
    Let $b < -2$, and $\wtb$ such that the matrix $\wtT_{n}$ is nonsingular. Then 
    \begin{equation}
        \text{rowsum}_{i} \wtT^{-1}_{n} = \frac{1}{b-2} - \frac{\beta + 1}{b-2} \frac{\gamma_{n+1-i} + \gamma_{i}}{\gamma_{n+1} + \beta (\gamma_1 + \gamma_n)}
    \end{equation}
    where $\beta =  \wtb - b$. 
\end{theorem}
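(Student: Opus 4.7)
The plan is to reuse the derivation of \eqref{eq:rowsum1} from the $b>2$ case and check that every step carries over verbatim to $b<-2$. The derivation is purely algebraic in the $\gamma_k$'s and $b$, and relies only on $r_1 r_2=1$ and $r_1+r_2=b$, both of which continue to hold when $b<-2$. The hypothesis $|b|>2$ enters only to guarantee that $r_1,r_2$ are distinct, real, and different from $1$, so the geometric-series manipulations remain valid.

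Concretely, I would start from~\eqref{eq:rowsumTt}, which follows from Sherman--Morrison and from the symmetry and centrosymmetry of $T_n$, neither of which depends on the sign of $b$. Since the explicit formula~\eqref{eq:invTij} for $t^{-1}_{i,j}$ holds for every $|b|>2$ (see~\cite{dow2002explicit}), the four intermediate evaluations
\[
\sum_{j=1}^n t^{-1}_{i,j}=\frac{\gamma_{n+1}-\gamma_{n+1-i}-\gamma_i}{(b-2)\gamma_{n+1}},\qquad t^{-1}_{i,1}+t^{-1}_{i,n}=\frac{\gamma_{n+1-i}+\gamma_i}{\gamma_{n+1}},
\]
\[
\sum_{j=1}^n t^{-1}_{n,j}=\frac{\gamma_{n+1}-\gamma_n-\gamma_1}{(b-2)\gamma_{n+1}},\qquad m_{11}+m_{12}=1+\beta\,\frac{\gamma_1+\gamma_n}{\gamma_{n+1}}
\]
are derived exactly as in~\eqref{12}--\eqref{15}. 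Their only non-trivial inputs are Lemma~\ref{lem:gamm1}(i),(iii) and Lemma~\ref{lem:gamm2}; a direct inspection of those proofs shows that they use only $r_1 r_2=1$ and $r_1,r_2\neq 1$, and in particular they never invoke the positivity statement Lemma~\ref{lem:gamm1}(ii)---the one place where the sign of $b$ genuinely mattered.

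Substituting these four identities into~\eqref{eq:rowsumTt} reproduces~\eqref{eq:rowsum1} unchanged, after which rewriting the coefficient $b-\wtb-1$ as $-(\beta+1)$ and $\wtb-b$ as $\beta$ yields the stated formula. I do not foresee any real obstacle: the only care needed is the lemma-by-lemma check that no intermediate step secretly used $b>2$. As a sanity check, the denominator factor $\gamma_{n+1}+\beta(\gamma_n+\gamma_1)$ coincides with $\gamma_{n+1}(m_{11}+m_{12})$ and vanishes precisely at the nonsingularity exclusions $\wtb\in\{\wtb_1,\wtb_2\}$ identified for this case, so the right-hand side is automatically well defined on the admissible parameter set.
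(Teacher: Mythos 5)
Your proposal is correct and takes essentially the same route as the paper: the paper's own justification is the single observation that the derivation of \eqref{eq:rowsum1} uses only $r_1 r_2 = 1$ and $r_1 + r_2 = b$, both of which persist for $b < -2$, so the formula carries over unchanged (with $b-\wtb-1 = -(\beta+1)$). Your lemma-by-lemma check --- in particular that only Lemma~\ref{lem:gamm1}(iii) and Lemma~\ref{lem:gamm2} are needed and the sign-dependent Lemma~\ref{lem:gamm1}(ii) is never invoked --- simply makes explicit what the paper asserts in one sentence.
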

\begin{theorem}[Upper Bound 1]\label{theor_upper_bound_3}
    Let $\wtb \leq -1$ be such that, for $b < -2$, the matrix $\wtT_{n}$ is nonsingular. Then, the following inequality holds
    \begin{align*}
        \|\wtT^{-1}_{n} \|_{\infty} \leq \begin{cases}
            \frac{1}{b + 2} \left(\frac{2(1-\wtb+b)(b+1)}{\wtb} \frac{\gamma_{\frac{n+1}{2}}}{\gamma_{n+1}}  -  1\right), &\text{if } \wtb < b + 1,\\
            -\frac{1}{b+2}, &\text{if } \wtb = b + 1,\\
            \frac{1 - b}{\wtb b - 2}, &\text{if } b + 1 < \wtb \leq -1.
        \end{cases}
    \end{align*}
\end{theorem}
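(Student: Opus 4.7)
The plan is to reduce the $b < -2$, $\wtb \leq -1$ case directly to the already-treated $b_+ > 2$, $\wtb_+ \geq 1$ case by exploiting the sign correspondence $\widetilde{t}^{-1}_{i,j} = (-1)^{i-j-1}\widetilde{t}^{-1}_{i,j,+}$ that was established at the start of Section~\ref{sec:b<-2}, where $b_+ = -b$ and $\wtb_+ = -\wtb$. Since $\wtb \leq -1$ is equivalent to $\wtb_+ \geq 1$, the positivity lemma in Section~\ref{sec:b>2} applies to $\wtT^{-1}_{n,+}$, giving $\widetilde{t}^{-1}_{i,j,+} \geq 0$. Combined with the sign relation, this yields $|\widetilde{t}^{-1}_{i,j}| = \widetilde{t}^{-1}_{i,j,+}$, so that
$$\sum_{j=1}^n |\widetilde{t}^{-1}_{i,j}| = \sum_{j=1}^n \widetilde{t}^{-1}_{i,j,+} \quad\text{for every } i,$$
and consequently $\|\wtT^{-1}_n\|_\infty = \|\wtT^{-1}_{n,+}\|_\infty$.

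Next, I would apply the infinity-norm upper bound theorem from Section~\ref{sec:b>2} to $\wtT^{-1}_{n,+}$. Its three sub-cases $\wtb_+ > b_+ - 1$, $\wtb_+ = b_+ - 1$, and $1 \leq \wtb_+ < b_+ - 1$ translate under $b_+ = -b$, $\wtb_+ = -\wtb$ to exactly the three conditions $\wtb < b+1$, $\wtb = b+1$, and $b+1 < \wtb \leq -1$ appearing in the theorem. The bounds are then rewritten in the original variables using the identities $b_+ - 2 = -(b+2)$, $b_+ - 1 = -(b+1)$, $1 + \wtb_+ - b_+ = 1 - \wtb + b$, $\wtb_+ b_+ - 2 = \wtb b - 2$, and $\gamma_{k,+} = (-1)^{k+1}\gamma_k$, so that the ratio $\gamma_{(n+1)/2,+}/\gamma_{n+1,+}$ equals $\gamma_{(n+1)/2}/\gamma_{n+1}$ up to a global sign factor that is absorbed by the overall sign flip coming from $1/(b_+-2) = -1/(b+2)$.

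The middle case $\wtb = b+1$ is immediate, since $1/(b_+-2) = -1/(b+2)$ gives the stated value $-1/(b+2)$, and the third case reduces by a one-line algebraic manipulation: $(b_+ + 1)/(\wtb_+ b_+ - 2) = (1-b)/(\wtb b - 2)$. The only genuine bookkeeping appears in the first case, where two minus signs (from $1/(b_+-2)$ and from the product $(1+\wtb_+-b_+)(b_+-1)/\wtb_+$) combine together with the parity factor relating $\gamma_{(n+1)/2,+}/\gamma_{n+1,+}$ to $\gamma_{(n+1)/2}/\gamma_{n+1}$; this is the main obstacle, and amounts to verifying that the net sign matches the form $\frac{1}{b+2}\bigl(\tfrac{2(1-\wtb+b)(b+1)}{\wtb}\tfrac{\gamma_{(n+1)/2}}{\gamma_{n+1}} - 1\bigr)$ as stated. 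Once this bookkeeping is checked, the three bounds follow immediately from their $b_+>2$ counterparts without any new analytical content.
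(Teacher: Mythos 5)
Your proposal is correct and follows essentially the same route as the paper: both reduce to the $b_+>2$, $\wtb_+\geq 1$ case via the sign correspondence $\wtt^{-1}_{i,j}=(-1)^{i-j-1}\wtt^{-1}_{i,j,+}$, conclude $\|\wtT^{-1}_n\|_{\infty}=\|\wtT^{-1}_{n,+}\|_{\infty}$, and then translate the three sub-cases of the $b>2$ norm bound back through $b_+=-b$, $\wtb_+=-\wtb$. The only detail worth noting is that the ratio $\gamma_{\frac{n+1}{2},+}/\gamma_{n+1,+}$ equals $\gamma_{\frac{n+1}{2}}/\gamma_{n+1}$ exactly (the paper checks this by a short parity argument on $n$), so no extra sign needs to be absorbed there; your remaining sign bookkeeping in the first case works out as you describe.
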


\begin{proof}
    We observe that 
    \begin{align*}
        \| \wtT^{-1}_{n} \|_{\infty} &= \max_{i} \sum_{j=1}^{n} \left|\wtt^{-1}_{i, j}\right| = \max_{i} \sum_{j = 1}^{n} \left|\wtt^{-1}_{i, j, +}\right| = \|\wtT^{-1}_{n, +}\|_{\infty}.
    \end{align*}
    Using the relation
    \begin{align*}
        \frac{\gamma_{\frac{n+1}{2}, +}}{\gamma_{n+1, +}} &= \frac{r^{\frac{n+1}{2}}_{1, +} - r^{\frac{n+1}{2}}_{2, +}}{(-1)^{n+2}\gamma_{n+1}} \\
        &= \begin{cases}
            \frac{r^{\frac{n+1}{2}}_2 - r^{\frac{n+1}{2}}_1}{(-1)^{n+2}\gamma_{n+1}}, &\text{if } n \text{ is odd}, \\
            \frac{r^{\frac{n+1}{2}}_1 - r^{\frac{n+1}{2}}_2}{(-1)^{n+2}\gamma_{n+1}}, &\text{if } n \text{ is even},
        \end{cases}\\
        &= (-1)^{n}\frac{r^{\frac{n+1}{2}}_1 - r^{\frac{n+1}{2}}_2}{(-1)^{n+2}\gamma_{n+1}} \\
        &= \frac{\gamma_{\frac{n+1}{2}}}{\gamma_{n+1}},
    \end{align*}
    we conclude the proof of the theorem.
\end{proof}
\begin{theorem}[Upper Bound 2]\label{theor_upper_bound_4}
    Let $\wtb > -1$ be such that, for $b < -2$, the matrix $\wtT_{n}$ is nonsingular. Then, the following inequality holds
    \begin{align*}
        \|\wtT^{-1}_{n}\|_{\infty} &\leq - \frac{\gamma_{n+1} - 2\gamma_{\frac{n+1}{2}}}{(b+2)\gamma_{n+1}}  - \frac{|K|}{b+2} [(-1)^{n+1}\gamma_{n} + \gamma_1][(-1)^{n} (\gamma_{n+1} + \gamma_{n}) - \gamma_1]  - \frac{|K\beta|}{b + 2} \gamma_{n-1}\gamma_{n+1},
    \end{align*}
    where $K = -\frac{\beta}{(\gamma_{n+1} + \beta \gamma_{n})^2 - (\beta \gamma_1)^2}$.
\end{theorem}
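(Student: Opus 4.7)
The plan is to reduce this theorem to the already-proved $b>2$ bound (Theorem~\ref{theor_upper_bound_2}) via the sign-swap relations developed at the start of Section~\ref{sec:b<-2}. From the identity $\wtt^{-1}_{i,j} = (-1)^{i-j-1}\wtt^{-1}_{i,j,+}$ one immediately gets $|\wtt^{-1}_{i,j}| = |\wtt^{-1}_{i,j,+}|$ for every pair $(i,j)$, and therefore
\[
\|\wtT^{-1}_n\|_\infty = \|\wtT^{-1}_{n,+}\|_\infty,
\]
where $\wtT_{n,+}$ is the matrix \eqref{eq:matrixTn} built from $b_+ = -b > 2$ and $\wtb_+ = -\wtb$. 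The hypothesis $\wtb > -1$ translates precisely to $\wtb_+ < 1$, so Theorem~\ref{theor_upper_bound_2} applies to $\wtT^{-1}_{n,+}$ and supplies the corresponding bound in the ``$+$''-variables. What remains is to rewrite that bound using $b,\beta,\gamma_k$.

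I would carry out the rewriting term by term with the dictionary $\gamma_{k,+} = (-1)^{k+1}\gamma_k$, $\beta_+ = -\beta$, and $b_+-2 = -(b+2)$. For the first term, the minus sign pulled out of $1/(b_+-2)$ combined with the identity $\gamma_{\frac{n+1}{2},+}/\gamma_{n+1,+} = \gamma_{\frac{n+1}{2}}/\gamma_{n+1}$ (verified by splitting $n$ even and $n$ odd, exactly as in the proof of Theorem~\ref{theor_upper_bound_3}) yields $-(\gamma_{n+1}-2\gamma_{\frac{n+1}{2}})/[(b+2)\gamma_{n+1}]$. For the $K$ factor, the parity signs on $\gamma_{n+1,+} + \beta_+\gamma_{n,+}$ and on $\beta_+\gamma_{1,+}$ are squared away, so
\[
(\gamma_{n+1,+} + \beta_+ \gamma_{n,+})^2 - (\beta_+ \gamma_{1,+})^2 = (\gamma_{n+1} + \beta \gamma_n)^2 - (\beta \gamma_1)^2,
\]
which makes $K_+ = -\beta/[\cdots]$ agree with the $K$ defined in the statement, hence $|K_+|=|K|$ and $|K_+\beta_+|=|K\beta|$.

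Substituting the parity factors into the remaining two terms converts $\gamma_{n,+}+\gamma_{1,+}$ into $(-1)^{n+1}\gamma_n+\gamma_1$, $\gamma_{n+1,+}-\gamma_{n,+}-\gamma_{1,+}$ into $(-1)^{n}(\gamma_{n+1}+\gamma_n)-\gamma_1$, and $\gamma_{n-1,+}\gamma_{n+1,+}$ into $\gamma_{n-1}\gamma_{n+1}$, while each $1/(b_+-2)$ contributes the overall $-1/(b+2)$. Collecting the three transformed terms reproduces exactly the claimed right-hand side. The main obstacle I anticipate is purely bookkeeping: propagating the factors $(-1)^{k+1}$ on each $\gamma_k$ together with the overall sign from $1/(b_+-2)$ without dropping a sign, and taking the absolute values $|K_+|,|K_+\beta_+|$ \emph{before} the parity signs are redistributed into the surrounding brackets. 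No new analytic content is needed beyond Theorem~\ref{theor_upper_bound_2}; the argument is a disciplined change of variables anchored on the norm equality $\|\wtT^{-1}_n\|_\infty = \|\wtT^{-1}_{n,+}\|_\infty$.
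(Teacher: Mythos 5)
Your proposal is correct and follows essentially the same route as the paper: both reduce to the $b_{+}=-b>2$, $\wtb_{+}=-\wtb<1$ case via $\|\wtT^{-1}_{n}\|_{\infty}=\|\wtT^{-1}_{n,+}\|_{\infty}$, apply Theorem~\ref{theor_upper_bound_2}, and translate back with $\gamma_{k,+}=(-1)^{k+1}\gamma_{k}$, $\beta_{+}=-\beta$, $b_{+}-2=-(b+2)$. Your sign bookkeeping for each of the three terms and for $K_{+}=K$ matches the paper's substitutions exactly.
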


\begin{proof}
    We begin by noting that $\|\wtT^{-1}_{n}\|_{\infty} = \|\wtT^{-1}_{n, +}\|_{\infty}$.
    Then, applying the inequality
    \begin{align*}
        \|\wtT^{-1}_{n, +}\|_{\infty} &\leq \frac{\gamma_{n+1, +} - 2\gamma_{\frac{n+1}{2}, +}}{(b_{+} - 2)\gamma_{n+1, +}}  + \frac{|K_{+}|}{b_{+}-2}(\gamma_{n, +} + \gamma_{1, +})(\gamma_{n+1, +} - \gamma_{n, +} - \gamma_{1, +}) \\
        &\quad + \frac{|K_{+}\beta_{+}|}{b_{+}-2}\gamma_{n-1, +}\gamma_{n+1, +},
    \end{align*}
    where $K_{+} = \frac{\beta_{+}}{(\gamma_{n+1, +} + \beta_{+} \gamma_{n, +})^2 - (\beta_{+} \gamma_{1, +})^2}$, we make the substitutions
    \begin{align*}
        \beta_{+} &= -\beta, \\
        \gamma_{k, +} &= (-1)^{k+1}\gamma_{k} \quad \text{for any $k \in \mathbb{Z}^{+}$}, \\
        b_{+} &= - b,
    \end{align*}
    to obtain the desired result.
\end{proof}

\section{Numerical Experiments} \label{sec:numexp}

In this section, we present an analysis of upper bounds and compare the maximum observed convergence rates for the fixed-point iteration method applied to Fisher's problem. Specifically, we note consistently tight upper bounds in cases where $b > 2$ with $\widetilde{b} \geq 1$ and when $b < -2$ with $\widetilde{b} \leq -1$. In other cases, there exists potential for improvement in alternate upper bounds.
\begin{figure}[htbp]
    \centering
    \begin{subfigure}{0.45\textwidth}
        \centering
        \includegraphics[width=\textwidth]{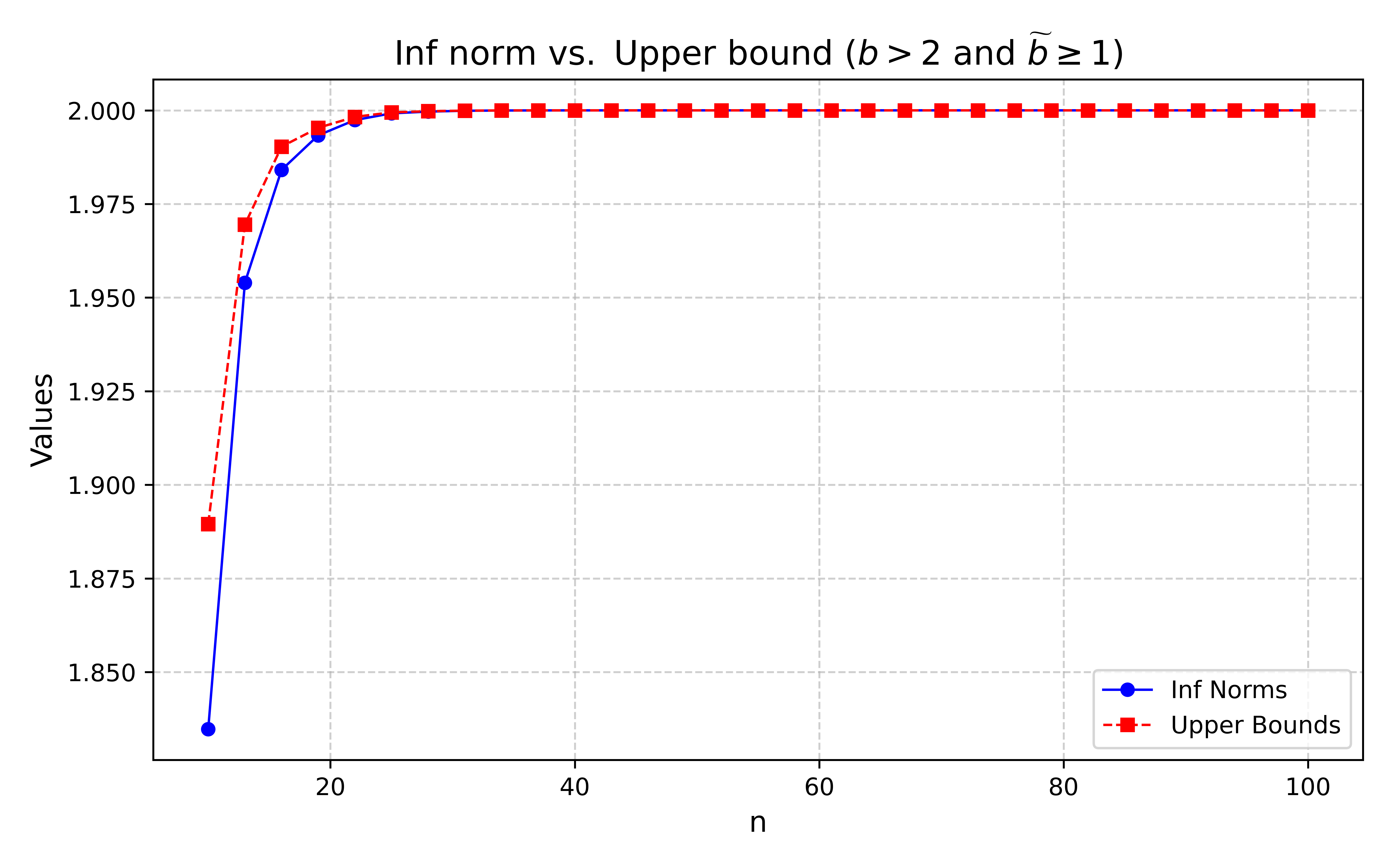}
        \caption{$\|\wtT^{-1}_{n}\|_{\infty}$ and upper bounds.}
        \label{fig3}
    \end{subfigure}
    \hspace{\fill} % Add some space between the subfigures
    \begin{subfigure}{0.45\textwidth}
        \centering
        \includegraphics[width=\textwidth]{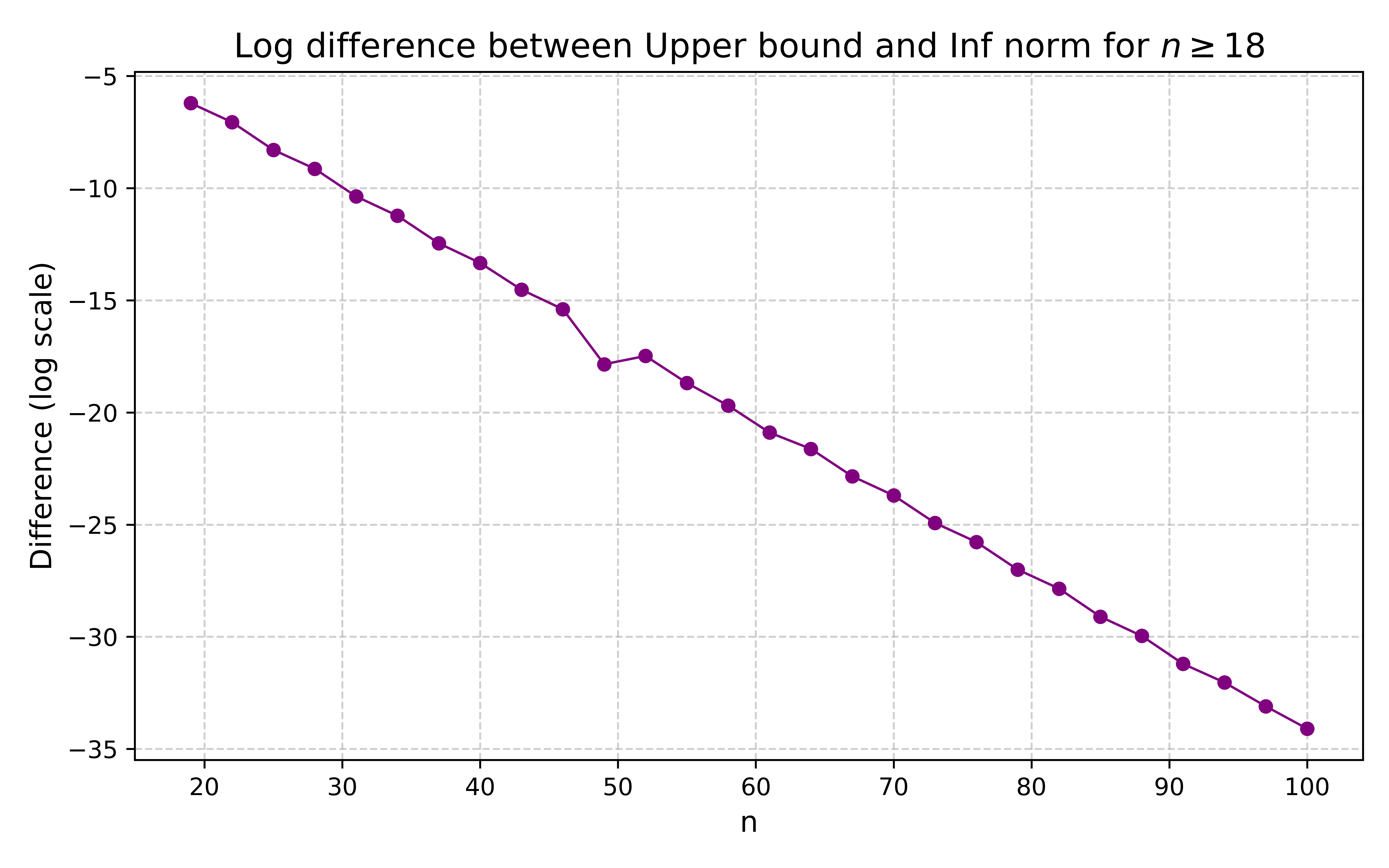}
        \caption{Log difference comparison.}
        \label{fig31}
    \end{subfigure}
    \caption{Comparison of $\|\wtT^{-1}_{n}\|_{\infty}$ vs. Upper bounds when $b > 2$ and $\wtb \geq 1$.}
    \label{fig:combined2}
\end{figure}

\begin{figure}[H]
    \centering
    \begin{subfigure}[b]{0.4\textwidth}
        \centering
        % Table \ref{tab2}: 
       
        \label{tab2}
        \begin{tabular}{cccc}
            \toprule
            $n$ & $\widetilde{b}$ & $\|\wtT^{-1}_{n}\|_{\infty}$ & $\text{upper\_bound}$ \\
            \midrule
            10 &  0.09 & 5.532 & 154.994 \\
            13 & -1.14 & 1.062 & 11.274 \\
            16 & -1.32 & 0.999 & 9.859 \\
            19 & -0.35 & 2.222 & 32.339 \\
            22 &  0.16 & 7.238 & 254.557 \\
            25 & -0.73 & 1.454 & 17.118 \\
            28 & 0.94  & 2.888 & 25.091 \\
            \bottomrule
        \end{tabular}
         \caption{Comparison of $\|\wtT^{-1}_{n}\|_{\infty}$ and upper bounds.}
    \end{subfigure}%
    \hfill
    \begin{subfigure}[b]{0.45\textwidth}
        \centering
        \includegraphics[width=\textwidth]{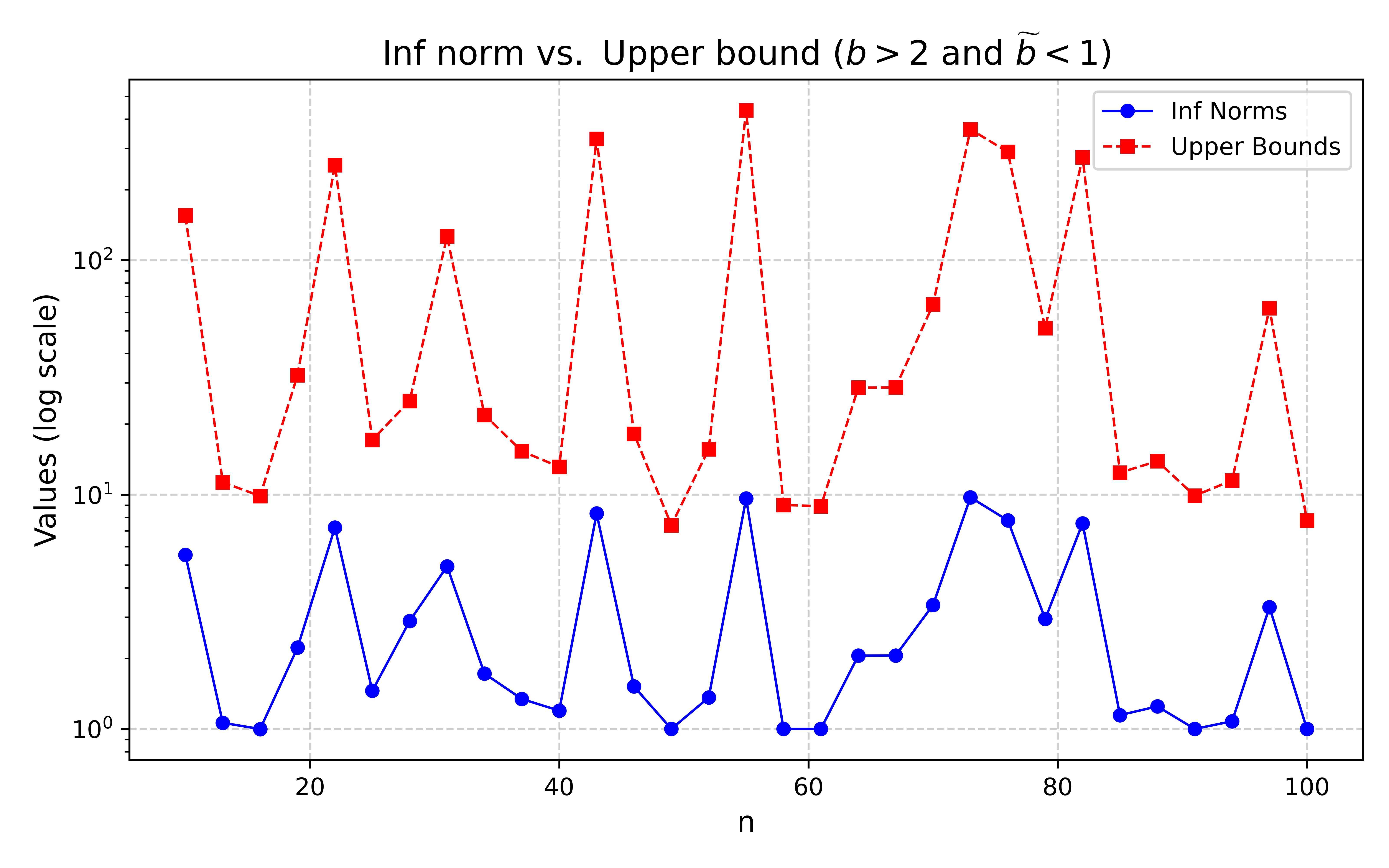}
        \caption{Log comparison of values.}
        \label{fig4}
    \end{subfigure}
    \caption{Comparison of $\|\wtT^{-1}_{n}\|_{\infty}$ vs. Upper bounds when $b>2$ and $\wtb < 1$.}
\end{figure}

\begin{figure}[htbp]
    \centering
    \begin{subfigure}{0.4\textwidth}
        \centering
        %Table \ref{tab2}: 
        \label{tab3}
        \begin{tabular}{cccc}
            \toprule
            $n$ & $\widetilde{b}$ & $\|\wtT^{-1}_{n}\|_{\infty}$ & $\text{upper\_bound}$ \\
            \midrule
            10 &  -2.52  & 0.993 &  0.996 \\
            13 &  -4.57 & 0.996 &  0.997 \\
            16 &  -4.87  & 0.999 &  0.999 \\
            19 &  -3.24  &  0.999 &  0.999 \\
            22 &  -2.40  &  1.000 &  1.000 \\ 
            25 &  -3.88  &  1.000 &  1.000 \\
            28 &  -1.10  &  2.266 &  3.104 \\
            \bottomrule
        \end{tabular}
         \caption{Comparison of $\|\wtT^{-1}_{n}\|_{\infty}$ and upper bounds.}
    \end{subfigure}%
    \hfill
    \begin{subfigure}{0.45\textwidth}
        \centering
        \includegraphics[width=\textwidth]{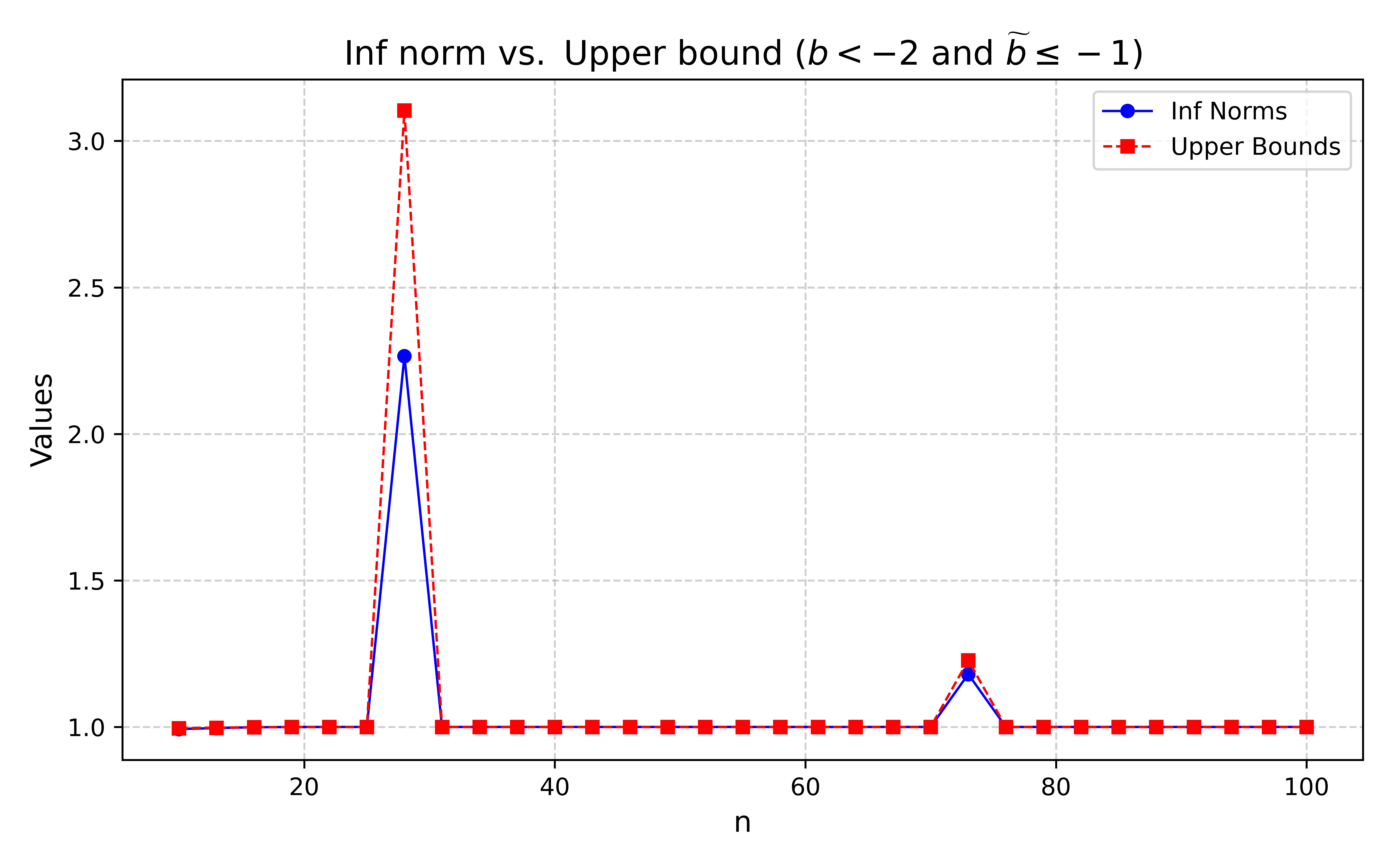}
        %Figure \ref{fig4}: 
        \caption{$\|\wtT^{-1}_{n}\|_{\infty}$ and upper bounds.}
        \label{fig5}
    \end{subfigure}
    \caption{Comparison of $\|\wtT^{-1}_{n}\|_{\infty}$ vs. Upper bounds when $b < -2$ and $\wtb \leq -1$.}
\end{figure}

\begin{figure}[htbp]
    \centering
    \begin{subfigure}{0.4\textwidth}
        \centering
        %Table \ref{tab2}: 
        
        \label{tab4}
        \begin{tabular}{cccc}
            \toprule
            $n$ & $\widetilde{b}$ & $\|\wtT^{-1}_{n}\|_{\infty}$ & $\text{upper\_bound}$ \\
            \midrule
            10 &  4.57  &  0.973 &  3.826 \\
            13 &  1.29  &  0.995 &  10.071 \\
            16 &  0.81  &  1.352 &  15.470 \\
            19 &  3.41  &  1.000 &  4.578 \\
            22 &  4.76  &  1.000 &  3.754 \\
            25 &  2.38  &  1.000 &  5.926 \\ 
            28 & 6.85   &  1.000 &   3.161 \\ 
            \bottomrule
        \end{tabular}
        \caption{Comparison of $\|\wtT^{-1}_{n}\|_{\infty}$ and upper bounds.}
    \end{subfigure}%
    \hfill
    \begin{subfigure}{0.45\textwidth}
        \centering
        \includegraphics[width=\textwidth]{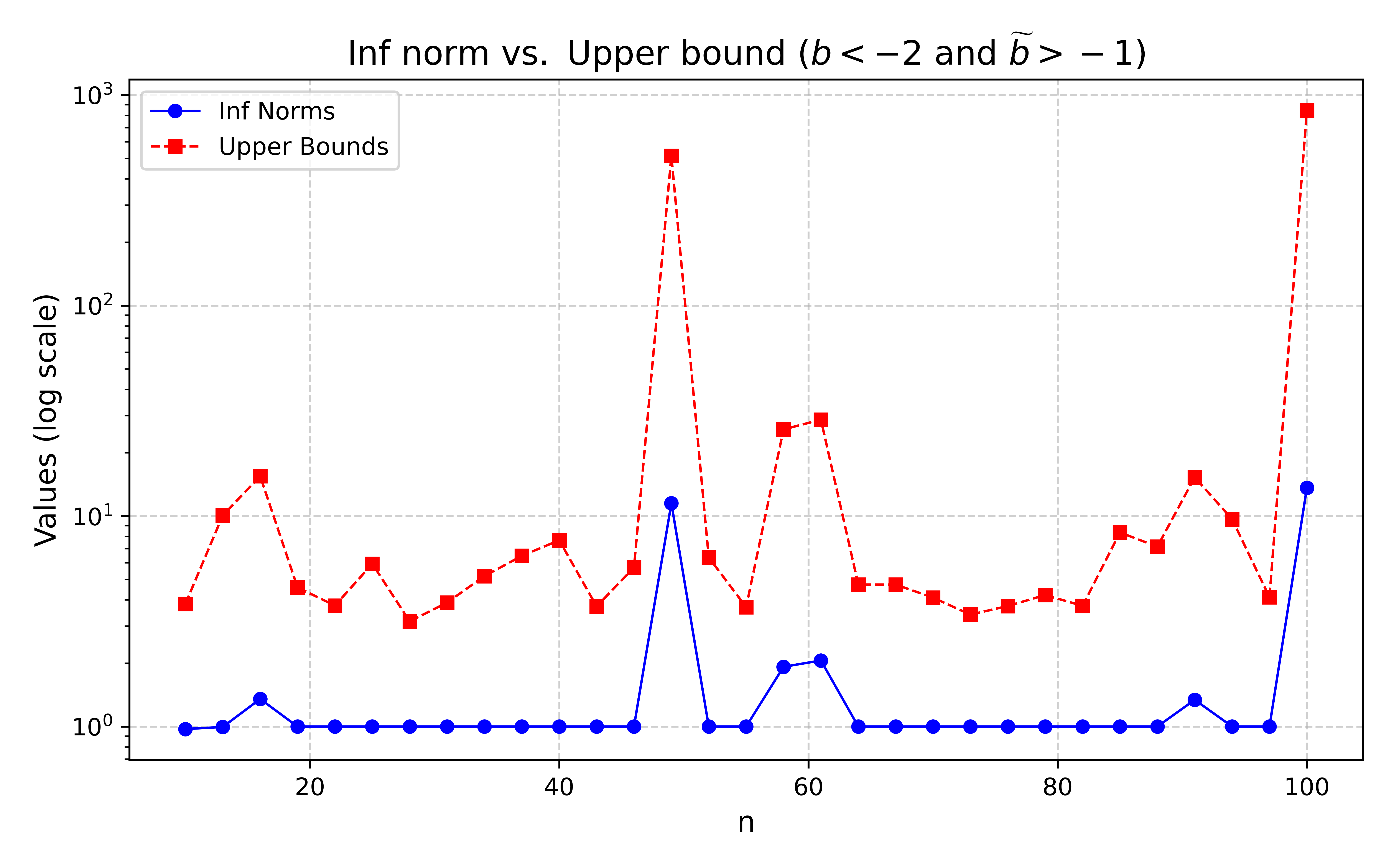}
        %Figure \ref{fig4}: 
        \caption{Log comparison of values.}
        \label{fig6}
    \end{subfigure}
    \caption{Comparison of $\|\wtT^{-1}_{n}\|_{\infty}$ vs. Upper bounds when $b < -2$ and $\wtb > -1$.}
\end{figure}

\FloatBarrier
\begin{table}[H]
    \centering
    \caption{Observed maximum convergence rate for Fisher's problem with $n = 20$, $b = \wtb = 4.0$, and $L = 2.0$. The expected rates are determined using upper bounds from Equation \eqref{upper_bound_1} and Theorem \ref{theor_upper_bound_2} applied to the norms of the inverse matrix.}
    \label{table5}
    \begin{tabular}{cccc}
        \toprule
        $k$ & $\textbf{Iterations}$ & $\textbf{Numerical rate}$ & $\textbf{Expected rate}$ \\
        \midrule
        1/2 & 4 & 0.0024 & 0.0025 \\
        1 & 4 & 0.0048 & 0.005 \\
        2 & 4 & 0.0097 & 0.01 \\
        4 & 5 & 0.0196 & 0.02 \\
        8 & 6 & 0.0391 & 0.04 \\
        16 & 7 & 0.0789 & 0.08 \\
        32 & 9 & 0.1564 & 0.16 \\
        \bottomrule
    \end{tabular}
\end{table}

\begin{table}[H]
    \centering
    \caption{Observed maximum convergence rate for Fisher's problem with $n = 50$, $b = \wtb = -4.0$, and $L = 1.0$. The expected rates are determined using upper bounds from Theorems \ref{theor_upper_bound_3} and \ref{theor_upper_bound_4} applied to the norms of the inverse matrix.}
    \label{table6}
    \begin{tabular}{cccc}
        \toprule
        $k$ & $\textbf{Iterations}$ & $\textbf{Numerical rate}$ & $\textbf{Expected rate}$ \\
        \midrule
        1 & 3 & 0.0001 & 0.0002 \\
        3 & 3 & 0.0003 & 0.0006 \\
        9 & 3 & 0.001 & 0.0018 \\
        27 & 4 & 0.0046 & 0.0054 \\
        81 & 5 & 0.0153 & 0.0162 \\
        273 & 6 & 0.0461 & 0.0486 \\
        729 & 8 & 0.137 & 0.1458 \\
        \bottomrule
    \end{tabular}
\end{table}

\section{Conclusion} \label{sec:concl}
The objective of this work is to present the explicit or the exact formula of bounds for the norm of inverse of some tridiagonal near-Toeplitz matrices. In addition, we provide the value of the traces and row sums of the inverse matrices. We show bounds  for $|b|>2$.
Theoretical results were validated by the numerical experiments. The findings suggest that these bounds could be useful for the fixed-point iterations in the convergence analysis.
Future research outcomes should explore scenarios where the Toeplitz part of tridiagonal near-Toeplitz matrices is weakly dominant, specifically when $|b| = 2$.

% \section{Acknowledgements}
% The authors would like to acknowledge the support of the research grant no. AP19679429, from the Ministry of Science and Higher Education of the Republic of Kazakhstan. The authors highly appreciate the support of the Nazarbayev University Faculty Development Competitive Research Grant (NUFDCRG), Grant No. 20122022FD4141. 

\FloatBarrier
\bibliographystyle{plain}%unsrt}plain  
\bibliography{reference}

\begin{thebibliography}{10}

\bibitem{akaike1973block}
Hirotugu Akaike.
\newblock Block toeplitz matrix inversion.
\newblock {\em SIAM Journal on Applied Mathematics}, 24(2):234--241, 1973.

\bibitem{akansu2012toeplitz}
Ali~N Akansu and Mustafa~U Torun.
\newblock Toeplitz approximation to empirical correlation matrix of asset returns: A signal processing perspective.
\newblock {\em IEEE Journal of Selected Topics in Signal Processing}, 6(4):319--326, 2012.

\bibitem{allgower1973exact}
EL~Allgower.
\newblock Exact inverses of certain band matrices.
\newblock {\em Numerische Mathematik}, 21:279--284, 1973.

\bibitem{amanbek2020explicit}
Yerlan Amanbek, Zhibin Du, Yogi Erlangga, Carlos~M Da~Fonseca, Bakytzhan Kurmanbek, and Ant{\'o}nio Pereira.
\newblock Explicit determinantal formula for a class of banded matrices.
\newblock {\em Open Mathematics}, 18(1):1227--1229, 2020.

\bibitem{andjelic2021some}
Milica An{\dj}eli{\'c} and Carlos~M da~Fonseca.
\newblock Some determinantal considerations for pentadiagonal matrices.
\newblock {\em Linear and Multilinear Algebra}, 69(16):3121--3129, 2021.

\bibitem{bottcher1999convergence}
A~B{\"o}ttcher, S~Grudsky, A~Kozak, and B~Silbermann.
\newblock Convergence speed estimates for the norms of the inverses of large truncated toeplitz matrices.
\newblock {\em Calcolo}, 36:103--122, 1999.

\bibitem{diele1998use}
F~Diele and Luciano Lopez.
\newblock The use of the factorization of five-diagonal matrices by tridiagonal toeplitz matrices.
\newblock {\em Applied mathematics letters}, 11(3):61--69, 1998.

\bibitem{dow2002explicit}
Murray Dow.
\newblock Explicit inverses of toeplitz and associated matrices.
\newblock {\em ANZIAM Journal}, 44:E185--E215, 2002.

\bibitem{fischer1974fourier}
Dietrich Fischer, Gene Golub, Ole Hald, Carlos Leiva, and Olof Widlund.
\newblock On fourier-toeplitz methods for separable elliptic problems.
\newblock {\em Mathematics of Computation}, 28(126):349--368, 1974.

\bibitem{heinig2013algebraic}
Georg Heinig et~al.
\newblock {\em Algebraic methods for Toeplitz-like matrices and operators}, volume~13.
\newblock Birkh{\"a}user, 2013.

\bibitem{huang1997analytical}
Y~Huang and WF~McColl.
\newblock Analytical inversion of general tridiagonal matrices.
\newblock {\em Journal of Physics A: Mathematical and General}, 30(22):7919, 1997.

\bibitem{kurmanbek2022proof}
Bakytzhan Kurmanbek, Yerlan Amanbek, and Yogi Erlangga.
\newblock A proof of an{\dj}eli{\'c}-fonseca conjectures on the determinant of some toeplitz matrices and their generalization.
\newblock {\em Linear and Multilinear Algebra}, 70(8):1563--1570, 2022.

\bibitem{kurmanbek2021explicit}
Bakytzhan Kurmanbek, Yogi Erlangga, and Yerlan Amanbek.
\newblock Explicit inverse of near toeplitz pentadiagonal matrices related to higher order difference operators.
\newblock {\em Results in Applied Mathematics}, 11:100164, 2021.

\bibitem{kurmanbek2021inverse}
Bakytzhan Kurmanbek, Yogi Erlangga, and Yerlan Amanbek.
\newblock Inverse properties of a class of seven-diagonal (near) toeplitz matrices.
\newblock {\em Special Matrices}, 10(1):67--86, 2021.

\bibitem{lewis1982inversion}
Jerry~W Lewis.
\newblock Inversion of tridiagonal matrices.
\newblock {\em Numerische Mathematik}, 38:333--345, 1982.

\bibitem{luati2010spectral}
Alessandra Luati and Tommaso Proietti.
\newblock On the spectral properties of matrices associated with trend filters.
\newblock {\em Econometric Theory}, 26(4):1247--1261, 2010.

\bibitem{lv2008note}
Xiao-Guang Lv, Ting-Zhu Huang, and Jiang Le.
\newblock A note on computing the inverse and the determinant of a pentadiagonal toeplitz matrix.
\newblock {\em Applied Mathematics and Computation}, 206(1):327--331, 2008.

\bibitem{pan2015estimating}
Victor~Y Pan, John Svadlenka, and Liang Zhao.
\newblock Estimating the norms of random circulant and toeplitz matrices and their inverses.
\newblock {\em Linear algebra and its applications}, 468:197--210, 2015.

\bibitem{pozrikidis2014introduction}
Constantine Pozrikidis.
\newblock {\em An introduction to grids, graphs, and networks}.
\newblock Oxford University Press, USA, 2014.

\bibitem{reddi1984eigenvector}
SS~Reddi.
\newblock Eigenvector properties of toeplitz matrices and their application to spectral analysis of time series.
\newblock {\em Signal Processing}, 7(1):45--56, 1984.

\bibitem{schlegel1970explicit}
Peter Schlegel.
\newblock The explicit inverse of a tridiagonal matrix.
\newblock {\em mathematics of computation}, 24(111):665, 1970.

\bibitem{shitov2021determinants}
Yaroslav Shitov.
\newblock The determinants of certain (0, 1) toeplitz matrices.
\newblock {\em Linear Algebra and its Applications}, 618:150--157, 2021.

\bibitem{smith1985numerical}
Gordon~D Smith.
\newblock {\em Numerical solution of partial differential equations: finite difference methods}.
\newblock Oxford university press, 1985.

\bibitem{sweet1969recursive}
Roland~A Sweet.
\newblock A recursive relation for the determinant of a pentadiagonal matrix.
\newblock {\em Communications of the ACM}, 12(6):330--332, 1969.

\bibitem{tan2019explicit}
Linda~SL Tan.
\newblock Explicit inverse of tridiagonal matrix with applications in autoregressive modelling.
\newblock {\em IMA Journal of Applied Mathematics}, 84(4):679--695, 2019.

\bibitem{turkmen2002bounds}
Ramazan T{\"u}rkmen and Durmu{\c{s}} Bozkurt.
\newblock On the bounds for the norms of cauchy--toeplitz and cauchy--hankel matrices.
\newblock {\em Applied Mathematics and Computation}, 132(2-3):633--642, 2002.

\bibitem{usmani1994inversion}
Riaz~A Usmani.
\newblock Inversion of a tridiagonal jacobi matrix.
\newblock {\em Linear Algebra and its Applications}, 212(213):413--414, 1994.

\bibitem{wang2015explicit}
Chaojie Wang, Hongyi Li, and Di~Zhao.
\newblock An explicit formula for the inverse of a pentadiagonal toeplitz matrix.
\newblock {\em Journal of Computational and Applied Mathematics}, 278:12--18, 2015.

\bibitem{yamamoto1979inversion}
Tetsuro Yamamoto and Yasuhiko Ikebe.
\newblock Inversion of band matrices.
\newblock {\em Linear Algebra and Its Applications}, 24:105--111, 1979.

\bibitem{yueh2008explicit}
Wen-Chyuan Yueh and Sui~Sun Cheng.
\newblock Explicit eigenvalues and inverses of tridiagonal toeplitz matrices with four perturbed corners.
\newblock {\em the ANZIAM Journal}, 49(3):361--387, 2008.

\end{thebibliography}

\newpage
\appendix
%\begin{appendix}

\section{Proof of Lemma \ref{lem:gamm3}} \label{E}

Using Lemma~\ref{lem:gamm1},
$$
\frac{\gamma_{n+1} - \gamma_n - \gamma_1}{\gamma_n + \gamma_1} = \frac{\gamma_{n+1}}{\gamma_n + \gamma_1} - 1 = b - 1 - \frac{\gamma_{n-1} + b \gamma_1}{\gamma_n + \gamma_1}.
$$
Since $\gamma_n < b \gamma_{n-1}$ (Lemma~\ref{lem:gamm1}), 
$$
\frac{\gamma_{n+1} - \gamma_n - \gamma_1}{\gamma_n + \gamma_1} <  b - 1 - \frac{\gamma_{n-1} + b \gamma_1}{b \gamma_{n-1} + \gamma_1} = b -  1 - \frac{1}{b} \frac{\gamma_{n-1} + b \gamma_1}{\gamma_{n-1} + \frac{1}{b} \gamma_1} < b - 1 - \frac{1}{b} < b - 1,
$$
for $b > 2$. 
Now, let's have a look at
\begin{align*}
    \frac{\gamma_{n-1}+b\gamma_1}{\gamma_n + \gamma_1} = \frac{\gamma_{n-1}+\gamma_2}{\gamma_n + \gamma_1} &= \frac{r^{n-1}_{1} - r^{n-1}_{2} + r^2_{1} - r^2_{2}}{r^n_{1} - r^n_{2} + r_1 - r_2}\\
    &= \frac{\left(r^{\frac{n+1}{2}}_{1} - r^{\frac{n+1}{2}}_{2}\right) \left(r^{\frac{n-3}{2}}_{1} + r^{\frac{n-3}{2}}_{2}\right)}{\left(r^{\frac{n+1}{2}}_{1} - r^{\frac{n+1}{2}}_{2}\right) \left(r^{\frac{n-1}{2}}_{1} + r^{\frac{n-1}{2}}_{2}\right)}\\
    &= \frac{r^{\frac{n-3}{2}}_{1} + r^{\frac{n-3}{2}}_{2}}{r^{\frac{n-1}{2}}_{1} + r^{\frac{n-1}{2}}_{2}}.
\end{align*}
Let's look at the difference
\begin{align*}
    \frac{\gamma_{n-1} + b\gamma_1}{\gamma_n + \gamma_1} - \frac{\gamma_{n} + b\gamma_1}{\gamma_{n+1} + \gamma_1} &= \frac{r^{\frac{n-3}{2}}_{1} + r^{\frac{n-3}{2}}_{2}}{r^{\frac{n-1}{2}}_{1} + r^{\frac{n-1}{2}}_{2}} - \frac{r^{\frac{n-2}{2}}_{1} + r^{\frac{n-2}{2}}_{2}}{r^{\frac{n}{2}}_{1} + r^{\frac{n}{2}}_{2}}\\
    &= \frac{r^{\frac{3}{2}}_1 + r^{\frac{3}{2}}_2 - r^{\frac{1}{2}}_1 - r^{\frac{1}{2}}_2}{\left(r^{\frac{n-1}{2}}_{1} + r^{\frac{n-1}{2}}_{2} \right) \left(r^{\frac{n}{2}}_{1} + r^{\frac{n}{2}}_{2} \right)}\\
    &= \frac{r^2_1 + r_2 - r_1 - 1}{r^{\frac{1}{2}}_1\left(r^{\frac{n-1}{2}}_{1} + r^{\frac{n-1}{2}}_{2} \right) \left(r^{\frac{n}{2}}_{1} + r^{\frac{n}{2}}_{2} \right)}. 
\end{align*}
And note that
\begin{align*}
    r^2_1 + r_2 - r_1 - 1 &= \frac{b^2 - 2 + b\sqrt{b^2 - 4}}{2} - \sqrt{b^2 - 4} - 1 \\
    &= \frac{b^2 - 4}{2} + \frac{\sqrt{b^2 - 4}}{2}(b - 2) > 0.
\end{align*}
Therefore, 
$$\frac{\gamma_{n-1} + b\gamma_1}{\gamma_n + \gamma_1} > \frac{\gamma_{n} + b\gamma_1}{\gamma_{n+1} + \gamma_1},$$
which means
\begin{align*}
    \max_{n\geq 3}{\frac{\gamma_{n-1} + b\gamma_1}{\gamma_n + \gamma_1}} = \frac{2\gamma_2}{\gamma_3 + \gamma_1} = \frac{2\gamma_2}{b\gamma_2} = \frac{2}{b}.
\end{align*}
Also, we note that
\begin{align*}
    \frac{\gamma_{n-1}+b \gamma_1}{\gamma_n + \gamma_1} = \frac{r^{\frac{n-3}{2}}_1 + r^{\frac{n-3}{2}}_2}{r^{\frac{n-1}{2}}_{1} + r^{\frac{n-1}{2}}_{2}} > \frac{r^{\frac{n-3}{2}}_1 + r^{\frac{n-3}{2}}_2}{r_{1} \left( r^{\frac{n-3}{2}}_1 + r^{\frac{n-3}{2}}_2\right)} = \frac{1}{r_1} = r_2.
\end{align*}
Therefore
\begin{align*}
    \frac{\gamma_{n+1} -\gamma_n - \gamma_1}{\gamma_n + \gamma_1} = b - 1 - \frac{\gamma_{n-1}+b\gamma_1}{\gamma_n + \gamma_1} \geq b - 1 - \frac{2}{b}
\end{align*}
and
\begin{align*}
    \frac{\gamma_{n+1} -\gamma_n - \gamma_1}{\gamma_n + \gamma_1} < b - 1 - r_2 = r_1 - 1. 
\end{align*}

Putting all together, we have
$$
b-2 < b - 1 - \frac{2}{b} \leq \frac{\gamma_{n+1} - \gamma_n - \gamma_1}{\gamma_n + \gamma_1} < r_1 - 1 < b-1,
$$
which can be simplified to get the lemma.

\section{Proof of Lemma \ref{lem:compact_t}} \label{F}

As we know for $i \geq j$ from \eqref{eq:wtt}
\begin{align*}
    \wtt^{-1}_{i,j} = t^{-1}_{i, j} - \frac{\beta}{\Delta} \left[t^{-1}_{i, 1} (m_{1, 1} t^{-1}_{1, j} - m_{1, 2} t^{-1}_{n, j}) + t^{-1}_{i, n} (-m_{1, 2} t^{-1}_{1, j} + m_{1,1} t^{-1}_{n,j})\right]
\end{align*}
where $t^{-1}_{i, j} = \frac{\gamma_{j} \gamma_{n+1-i}}{\gamma_1 \gamma_{n+1}}$ for $i \geq j$ and $\beta = \wtb - b$ with 
\begin{align*}
    m_{1, 1} &= 1 + \beta t^{-1}_{n,n} = 1 + \frac{\beta \gamma_n}{\gamma_{n+1}}\\
    m_{1, 2} &= \beta t^{-1}_{n, 1} = \frac{\beta \gamma_1}{\gamma_{n+1}}
\end{align*}
Now, we will find each piece of $\wtt^{-1}_{i, j}$ and try to write it in a more compact way. 
\begin{align*}
    m_{1,1} t^{-1}_{1, j} - m_{1, 2} t^{-1}_{n, j} &= \left( 1 + \frac{\beta \gamma_n}{\gamma_{n+1}}\right) \frac{\gamma_{n+1-j}}{\gamma_{n+1}} - \frac{\beta \gamma_1}{\gamma_{n+1}} \frac{\gamma_j}{\gamma_{n+1}} \\
    &= \frac{\gamma_{n+1-j}}{\gamma_{n+1}} + \frac{\beta}{\gamma^2_{n+1}} (\gamma_n \gamma_{n+1-j} - \gamma_1 \gamma_j)\\
    &= \frac{\gamma_{n+1-j}}{\gamma_{n+1}} + \frac{\beta \gamma_{n-j}}{\gamma_{n+1}}
\end{align*}
The last equality due to 
\begin{align} \label{id_1}
    \gamma_n \gamma_{n+1-j} - \gamma_1 \gamma_j 
    &= r^{2n+1-j}_{1} - r^{j-1}_{1} - r^{j-1}_{2} + r^{2n+1-j}_{2} - r^{j+1}_{1} + r^{j-1}_{1} + r^{j-1}_{2} - r^{j+1}_{2} \notag \\ 
    &= r^{2n+1-j}_{1} - r^{j+1}_{1} - r^{j+1}_{2} + r^{2n+1-j}_{2} \notag \\  
    &= \gamma_{n+1} \gamma_{n-j}.
\end{align}
In the same way, we get
\begin{align*}
    -m_{1, 2} t^{-1}_{1, j} + m_{1,1} t^{-1}_{n,j} &= -\frac{\beta \gamma_1}{\gamma_{n+1}} \frac{\gamma_{n+1-j}}{\gamma_{n+1}} + \left( 1 + \frac{\beta \gamma_n}{\gamma_{n+1}}\right) \frac{\gamma_j}{\gamma_{n+1}}\\
    &= \frac{\gamma_j}{\gamma_{n+1}} + \frac{\beta}{\gamma^2_{n+1}} (\gamma_n \gamma_j - \gamma_1 \gamma_{n+1-j})\\
    &= \frac{\gamma_j}{\gamma_{n+1}} + \frac{\beta \gamma_{j-1}}{\gamma_{n+1}}.
\end{align*}
The $\Delta$ part is 
\begin{align*}
    \Delta = m^2_{1, 1} - m^{2}_{1, 2} &= \left(1 + \frac{\beta (\gamma_{n} + \gamma_1)}{\gamma_{n+1}} \right) \left(1 + \frac{\beta (\gamma_n - \gamma_1)}{\gamma_{n+1}} \right)\\
    &= \frac{(\gamma_{n+1} + \beta \gamma_n)^2 - (\beta \gamma_1)^2}{\gamma^2_{n+1}}
\end{align*}
Therefore, 
\begin{align} \label{eq2_wtt_init}
    \wtt^{-1}_{i, j} &= \frac{\gamma_j \gamma_{n+1-i}}{\gamma_1 \gamma_{n+1}} - \frac{\beta}{\Delta \gamma^2_{n+1}}\left( \gamma_{n+1-i} (\gamma_{n+1-j} + \beta \gamma_{n-j}) + \gamma_i (\gamma_j + \beta \gamma_{j-1}) \right) \notag\\
    &= \frac{\gamma_j \gamma_{n+1-i}}{\gamma_1 \gamma_{n+1}} - \beta\frac{\gamma_{n+1-i} (\gamma_{n+1-j} + \beta \gamma_{n-j}) + \gamma_i (\gamma_j + \beta \gamma_{j-1})  }{(\gamma_{n+1} + \beta \gamma_n)^2 - (\beta \gamma_1)^2}. 
\end{align}
Now, let us consider the following parts of the $\wtt^{-1}_{i, j}$, so we can simply further
\begin{align*}
    P_{i, j} &:=\gamma_j ((\gamma_{n+1} + \beta \gamma_n)^2 - (\beta \gamma_1)^2) - \beta\gamma_1 \gamma_{n+1} (\gamma_{n+1-j} + \beta \gamma_{n-j}) \quad \text{[using $\gamma^2_{n} - \gamma^2_{1} = \gamma_{n+1}\gamma_{n-1}$]}\\ 
    &= \gamma_j \gamma^2_{n+1} + \beta (2 \gamma_j \gamma_n \gamma_{n+1} - \gamma_1 \gamma_{n+1}\gamma_{n+1 -j}) + \beta^2 (\gamma_j \gamma_{n+1}\gamma_{n-1} - \gamma_1\gamma_{n+1}\gamma_{n-j})\\
    &= \gamma_{n+1}\left[\gamma_j \gamma_{n+1} + \beta (2 \gamma_j \gamma_n - \gamma_1 \gamma_{n+1-j}) + \beta^2 (\gamma_j \gamma_{n-1} - \gamma_1 \gamma_{n-j})\right]\\
    &= \gamma_{n+1}\left[\gamma_j \gamma_{n+1} + \beta(\gamma_j \gamma_n + \gamma_{n+1}\gamma_{j-1}) + \beta^2 \gamma_{n} \gamma_{j-1}  \right].
\end{align*}
The last equality is due to the previous shown identity $\gamma_n \gamma_{n+1-j} - \gamma_1 \gamma_j = \gamma_{n+1}\gamma_{n-j}$ with $j \rightarrow n+1-j$, so we get $\gamma_j \gamma_n  - \gamma_1 \gamma_{n+1-j} = \gamma_{n+1}\gamma_{j-1}$ and substituting $n \rightarrow n-1$ to the new one, we get $\gamma_j \gamma_{n-1} - \gamma_1 \gamma_{n-j} = \gamma_n \gamma_{j-1}$.
\begin{align*}
    Q_{i, j} &:= \frac{\gamma_{n+1-i} \cdot P_{i, j}}{\gamma_{n+1}} - \beta \gamma_1 \gamma_i (\gamma_j + \beta \gamma_{j-1}) \\
    &= \gamma_{n+1-i}\left[\gamma_j \gamma_{n+1} + \beta(\gamma_j \gamma_n + \gamma_{n+1}\gamma_{j-1}) + \beta^2 \gamma_{n} \gamma_{j-1}  \right] - \beta\gamma_1 \gamma_i (\gamma_j + \beta \gamma_{j-1})\\
    &= \gamma_{n+1-i}\gamma_{n+1}(\gamma_j + \beta \gamma_{j-1})+ \beta \gamma_j (\gamma_{n+1-i} \gamma_n - \gamma_1 \gamma_{i}) + \beta^2 \gamma_{j-1}(\gamma_{n+1-i}\gamma_n  - \gamma_1 \gamma_{i})\\
    &= \gamma_{n+1-i}\gamma_{n+1}(\gamma_j + \beta \gamma_{j-1}) + \beta \gamma_j \gamma_{n+1} \gamma_{n-i} + \beta^2 \gamma_{j-1} \gamma_{n+1}\gamma_{n-i} \quad \text{[by identity]}\\
    &= \gamma_{n+1-i}\gamma_{n+1}(\gamma_j + \beta \gamma_{j-1}) + \beta \gamma_{n+1} \gamma_{n-i} (\gamma_j + \beta \gamma_{j-1})\\
    &= \gamma_{n+1}(\gamma_{n+1-i} + \beta \gamma_{n-i})(\gamma_j + \beta \gamma_{j-1}).
\end{align*}
Therefore for $i \geq j$, 
\begin{align}\label{eq2_wtt}
    \wtt^{-1}_{i, j} &= \frac{\gamma_{n+1-i}}{\gamma_1 \gamma_{n+1} ((\gamma_{n+1} + \beta \gamma_n)^2 - (\beta \gamma_1)^2)} P_{i, j} - \frac{\beta \gamma_i (\gamma_j + \beta \gamma_{j-1})}{(\gamma_{n+1} + \beta \gamma_n)^2 - (\beta \gamma_1)^2} \notag\\
    &= \frac{Q_{i, j}}{\gamma_1((\gamma_{n+1} + \beta \gamma_n)^2 - (\beta \gamma_1)^2)} \notag \\
    &= \frac{\gamma_{n+1}}{\gamma_1 ((\gamma_{n+1} + \beta \gamma_n)^2 - (\beta \gamma_1)^2)} [\gamma_{n+1-i} + \beta \gamma_{n-i}][\gamma_j + \beta \gamma_{j-1}] \notag \\
    &= C (\gamma_{n+1-i} + \beta \gamma_{n-i})(\gamma_j + \beta \gamma_{j-1})
\end{align}
where $C:= \frac{\gamma_{n+1}}{\gamma_1 ((\gamma_{n+1} + \beta \gamma_n)^2 - (\beta \gamma_1)^2)}$.

\section{Proof of Lemma \ref{lem:sum_squared_gammas}}\label{G}
With $r_1 r_2 = 1$,
\begin{align}
&\sum_{i=1}^n \left\{ \gamma_i^2 + \gamma_{n+1-i}^2 \right\} = \notag \\ 
%&= \sum_{i=1}^n \left\{(r_1^i - r_2^i)^2 + (r_1^{n+1-i} - r_2^{n+1-i})^2\right\} \notag \\
&= \sum_{i=1}^n \left\{ r_1^{2(n+1)} \left( \frac{1}{r_1^2} \right)^i + r_2^{2(n+1)} \left( \frac{1}{r_2^2} \right)^i + (r_1^2)^i + (r_2^2)^i - 4 \right\} \notag \\
&=  r_1^{2(n+1)}   \left\{ \sum_{i = 0}^n \left( \frac{1}{r_1^2} \right)^i  - 1 \right\} + r_2^{2(n+1)} \left\{  \sum_{i=0}^n  \left( \frac{1}{r_2^2} \right)^i  - 1 \right\} + \sum_{i=0}^n (r_1^2)^i - 1 + \sum_{i=0}^n (r_2^2)^i - 1 - \sum_{i=1}^n 4 \notag \\
&= r_1^{2(n+1)} \left\{ \displaystyle \frac{1- \left(\frac{1}{r_1^2}\right)^{n+1}}{1-\frac{1}{r_1^2}}  - 1\right\} +  r_2^{2(n+1)} \left\{ \displaystyle \frac{1- \left(\frac{1}{r_2^2}\right)^{n+1}}{1-\frac{1}{r_2^2}}  - 1\right\} + \frac{1 - (r_1^2)^{n+1}}{1-r_1^2} +  \frac{1 - (r_2^2)^{n+1}}{1-r_2^2} - 4n - 2 \notag \\
&= \frac{r_1^{2(n+1)} - 1}{r_1^2 - 1}(r_1^2 + 1) + \frac{r_2^{2(n+1)}-1}{r_2^2 - 1} (r_2^2+1) -\left( r_1^{2(n+1)} + r_2^{2(n+1)} \right) - 4n - 2 \notag \\
&= \frac{(r_1^{2(n+1)} - 1)(r_1^2 +1)(r_2^2-1) + (r_2^{2(n+1)} - 1)(r_2^2 + 1)(r_1^2-1)}{(r_1^2-1)(r_2^2 -1)} - \left( r_1^{2(n+1)} + r_2^{2(n+1)} \right) - 4n - 2 \notag \\
&= \frac{(r_1^{2(n+1)} - 1)(r_2^2 - r_1^2 ) + (r_2^{2(n+1)} - 1)(r_1^2 - r_2^2)}{2 - (r_1^2 + r_2^2)} - \left( r_1^{2(n+1)} + r_2^{2(n+1)} \right) - 4n - 2 \notag \\
&= - \frac{\gamma_2 (r_1^{2(n+1)} - r_2^{2(n+1)})}{2 - (r_1^2 + r_2^2)} - \left( r_1^{2(n+1)} + r_2^{2(n+1)} \right) - 4n - 2,
\end{align}
because $\gamma_2 := r_1^2 - r_2^2$. Notice that for $k = 1,2, \dots$, 
\begin{itemize}
\item $r_1^{2k} + r_2^{2k} = (r_1^k - r_2^k)^2 + 2(r_1r_2)^k  = \gamma_k^2 + 2$, and
\item $(r_1^{2k} - r_2^{2k}) = (r_1^k + r_2^k)(r_1^k - r_2^k) = \gamma_k \left( \gamma^2_{\frac{k}{2}} + 2 \right)$.
\end{itemize}
Using the above relations, $\gamma_1 = \sqrt{b^2-4}$, and $\gamma_2 = b\sqrt{b^2-4}$, we get the result in the lemma.

%\end{appendix}
\end{document}